\documentclass[a4paper,12pt]{article}
\usepackage{tikz}
\usetikzlibrary{arrows}
\usetikzlibrary{calc}
\usetikzlibrary{positioning}
\usetikzlibrary{cd}
\usepackage{authblk}
\usepackage{graphicx,amsmath,amssymb,amsthm,amscd,mathrsfs,mathabx,mathtools}

\usepackage[margin=2.5cm]{geometry}

\newtheorem{thm}{Theorem}[section]
\newtheorem{prop}[thm]{Proposition}
\newtheorem{lem}[thm]{Lemma}

\theoremstyle{definition}
\newtheorem{df}[thm]{Definition}
\newtheorem{prob}[thm]{Problem}
\newtheorem{ass}[thm]{Assumption}
\theoremstyle{remark}
\newtheorem{rem}[thm]{Remark}

\pagestyle{plain}

\title{On the Riemann-Hilbert problem for hyperplane arrangements with 
a good line
}
\date{}

\author[1]{Shunya Adachi\thanks{The author is supported by JSPS KAKENHI Grant Number 24K22826}} 
\author[2]{Kazuki Hiroe\thanks{The author is supported by JSPS KAKENHI Grant Number 25K07043}}
% \thanks{Department of Mathematics and Informatics, Chiba University\\
%   1-33, Yayoi-cho, Inage-ku, Chiba-shi, Chiba, 263-8522 JAPAN\\
%   email: {\tt kazuki@math.s.chiba-u.ac.jp}
%  }
% \thanks{
% 	The author is supported by JSPS KAKENHI Grant Number 20K03648.}
\affil[1]{Cooperative Faculty of Education, Utsunomiya University \linebreak
  350 Mine-machi, Utsunomiya-shi, Tochigi, 321-8505 JAPAN \linebreak
 email: {\tt sadachi@a.utsunomiya-u.ac.jp}}
\affil[2]{Department of Mathematics and Informatics, Chiba University \linebreak
1-33 Yayoi-cho, Inage-ku, Chiba-shi, Chiba, 263-8522 JAPAN \linebreak
email: {\tt kazuki@math.s.chiba-u.ac.jp}}

\begin{document}
\maketitle
\begin{abstract}
We study a variant of the Riemann-Hilbert problem on the complements 
of hyperplane arrangements. 
This problem asks whether a given local system on the complement can be 
realized as the solution sheaf of a logarithmic Pfaffian system 
with constant coefficients. In this paper, we generalize Katz's middle 
convolution as a functor for local systems on hyperplane complements 
and show that it preserves the solvability of this problem.
\end{abstract}
\footnotetext{\emph{Key words} : Hyperplane arrangement, Riemann-Hilbert problem, Logarithmic connection, Middle convolution}   
\footnotetext{\emph{Mathematics Subject Classification} : 32S40, 32S22, 33C70}   

\section*{Introduction}
Let $\mathcal{A}$ be an arrangement of affine hyperplanes 
in the affine space $V=\mathbb{C}^{l}$, and denote its complement
$M(\mathcal{A})=V\backslash\mathcal{A}$.
In this paper, 
we study a variant of the Riemann-Hilbert problem on $M(\mathcal{A})$
which asks whether a given local system on  $M(\mathcal{A})$
can be realized as the solution sheaf of a logarithmic Pfaffian system with constant coefficients.
Since a hyperplane arrangement $\mathcal{A}$
defines a divisor which is not normally crossing in general,
this is still a nontrivial problem.
The main goal of this paper is to prove that the middle convolution functor on $M(\mathcal{A})$
preserves the solvability of this problem.

\subsection{Riemann-Hilbert problem for logarithmic Pfaffian systems}
Recall that 
the de Rham functor
\[
	\mathrm{DR}\colon \mathrm{Conn}(M(\mathcal{A}))\rightarrow \mathrm{Loc}(M(\mathcal{A}),\mathbb{C});\quad
	\nabla\mapsto \mathrm{Ker}(\nabla)
\]
establishes an equivalence between
the category $\mathrm{Conn}(M(\mathcal{A}))$ of
flat connections on $M(\mathcal{A})$ and
the category $\mathrm{Loc}(M(\mathcal{A}),\mathbb{C})$ of
local systems on $M(\mathcal{A})$.
By restricting this functor to
the category 
$\mathrm{Pf}(\log\mathcal{A})$
defined below, we aim to  characterize
the essential image of the restriction
\[
	\mathrm{DR}_{\mathrm{Pf}}\colon \mathrm{Pf}(\log\mathcal{A})\rightarrow \mathrm{Loc}(M(\mathcal{A}),\mathbb{C}).
\]

For  
a finite dimensional $\mathbb{C}$-vector space $E$,
let us consider an $\mathrm{End}_{\mathbb{C}}(E)$-valued $1$-form
on $M(\mathcal{A})$,
\[
	\Omega_{A}=\sum_{H\in \mathcal{A}}A_{H}\frac{df_{H}}{f_{H}}
\]
where $A_{H}\in \mathrm{End}_{\mathbb{C}}(E)$
and $f_{H}$ is the
defining linear polynomial of $H$.
Then under the integrability condition
\(
	\Omega_{A}\wedge \Omega_{A}=0,
\)
this $1$-form defines the flat connection
\[
	\nabla_{A}=d-\Omega_{A}
	\colon \mathcal{O}_{\mathbb{C}^{l}}\otimes_{\mathbb{C}}E
	\rightarrow \varOmega^{1}_{\mathbb{C}^{l}}(*\mathcal{A})\otimes_{\mathbb{C}}E.
\]
Then we call these flat connections 
logarithmic Pfaffian systems with constant coefficients on $M(\mathcal{A})$,
and  
denote 
the category of such flat connections
by $\mathrm{Pf}(\log\mathcal{A})$.
The problem can now be formulated as follows.
\begin{prob}[Problem \ref{prob:RH}]
Let $\mathcal{L}\in \mathrm{Loc}(M(\mathcal{A}),\mathbb{C})$ be a local system.
Does there exist a logarithmic Pfaffian system $\nabla_{A}\in \mathrm{Pf}(\mathrm{log}(\mathcal{A}))$
such that $\mathrm{DR}_{\mathrm{Pf}}(\nabla_{A})\cong \mathcal{L}$?
We call such a logarithmic Pfaffian system $\nabla_{A}$
a {solution} for $\mathcal{L}$.
\end{prob}
We briefly recall related works.
Aomoto and Kohno
addressed this problem 
in \cite{Aom78,Kohno12},
and proved that 
every unipotent local system admits a solution.
On the other hand, for a smooth complex algebraic variety $V$
with a smooth compactification $\overline{V}$
such that $D=\overline{V}\backslash V$ is a normally crossing divisor,
Hain \cite{Hain86} considered the similar Riemann-Hilbert problem,
asking whether a local system on $V$ is realized as the solution sheaf
for a connection $d-\Omega$
with $\Omega\in \varOmega^{1}_{\overline{V}}(\log D)(\overline{V})\otimes \mathfrak{gl}(n)$,
and gave a characterization of the solvability of this problem.
Here $\varOmega^{1}_{\overline{V}}(\log D)$ denotes
the sheaf of meromorphic $1$-forms with logarithmic poles along $D$.

\subsection{Main results}
We now present main results of this paper. For an affine hyperplane arrangement $\mathcal{A}$ in $\mathbb{C}^{l}$, we say that a complex line $Y$ in $\mathbb{C}^{l}$ passing through the origin $\mathbf{0}$ is \emph{good} if
\[
	X+Y\in L(\mathcal{A}) \quad \text{for all } X\in L_{2}(\mathcal{A}),
\]
where $L(\mathcal{A})$ denotes the intersection poset of $\mathcal{A}$ and $L_{2}(\mathcal{A})$ its subset of rank-$2$ elements. This notion of goodness is a natural extension of that for central arrangements introduced by Terao \cite{Tera1}.

Assume that $\mathcal{A}$ admits a good line $Y$. For a nontrivial character $\chi\colon \mathbb{Z}\to \mathbb{C}^{\times}$, we construct in Definition \ref{df:MC} an endofunctor
\[
	\mathrm{MC}_{\chi}\colon \mathrm{Loc}(M(\mathcal{A}),\mathbb{C})\longrightarrow \mathrm{Loc}(M(\mathcal{A}),\mathbb{C}),
\]
called the \emph{middle convolution functor} on $M(\mathcal{A})$, as a generalization of the middle convolution functor on $\mathbb{C}\setminus\{n\text{-points}\}$ introduced by Katz \cite{Katz}. We show that this functor is compatible with Katz’s original construction (Proposition \ref{prop:katzreduction}) and satisfies a composition law (Theorem \ref{thm:composition}), analogous to the classical case.

On the other hand, Haraoka \cite{Har1} defined a similar functor for $\lambda\in \mathbb{C}$,
\[
	\mathrm{mc}_{\lambda}\colon \mathrm{Pf}(\log\mathcal{A})\longrightarrow \mathrm{Pf}(\log\mathcal{A}),
\]
for logarithmic Pfaffian systems with constant coefficients, as a generalization of the middle convolution for Fuchsian ordinary differential equations introduced by Dettweiler–Reiter \cite{DR00,DR07}. Our first main result establishes the compatibility of these functors under the following assumption.

\begin{ass}[Assumption \ref{as:generic}]\label{as:generic0}
Let $\mathcal{A}$ be an affine hyperplane arrangement in $\mathbb{C}^{l}$ with a good line $Y$, and let $\mathcal{A}_{Y}$ denote the subset of $\mathcal{A}$ consisting of hyperplanes parallel to $Y$. For a logarithmic Pfaffian system
\[
	\nabla_{A}=d-\sum_{H\in \mathcal{A}}A_{H}\frac{df_{H}}{f_{H}}\in \mathrm{Pf}(\log\mathcal{A}),
\]
we impose the following condition with respect to a parameter $\lambda\in \mathbb{C}\setminus\mathbb{Z}$:  
for each $H\in \mathcal{A}\setminus\mathcal{A}_{Y}$, the matrix $A_{H}$ has no nonzero integer eigenvalue, and the sum $\sum_{H\in \mathcal{A}\setminus\mathcal{A}_{Y}}A_{H}+\lambda$ also has no nonzero integer eigenvalue.
\end{ass}

\begin{thm}[Theorem \ref{thm:MCdeRham}]\label{thm:introMCcomp}
Let $\nabla_{A}\in \mathrm{Pf}(\log\mathcal{A})$ be a logarithmic Pfaffian system satisfying Assumption \ref{as:generic0} for some $\lambda\in \mathbb{C}\setminus\mathbb{Z}$. Let $\chi\colon \mathbb{Z}\to \mathbb{C}^{\times}$ be the character defined by $\chi(1)=\exp(2\pi i\lambda)$. Then there exists an isomorphism of local systems on $M(\mathcal{A})$:
\[
	\mathrm{MC}_{\chi}\circ \mathrm{DR}_{\mathrm{Pf}}(\nabla_{A})
	\cong 
	\mathrm{DR}_{\mathrm{Pf}}\circ \mathrm{mc}_{\lambda}(\nabla_{A}).
\]
\end{thm}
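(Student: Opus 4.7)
The plan is to reduce the identity to the classical one-variable Katz--Dettweiler--Reiter compatibility via restriction along lines parallel to the good line $Y$, and then to globalize the resulting fiberwise isomorphism. Choose linear coordinates so that $Y$ is the $x_{l}$-axis and let $\pi\colon \mathbb{C}^{l}\to \mathbb{C}^{l-1}$ denote the projection along $Y$. A short combinatorial argument using the good-line hypothesis shows that $\pi$ restricts to a topologically locally trivial fibration from $M(\mathcal{A})$ onto an open subset $B\subset \mathbb{C}^{l-1}$, with fiber $L_{y}\cap M(\mathcal{A})$ a copy of $\mathbb{C}$ with $|\mathcal{A}\setminus \mathcal{A}_{Y}|$ punctures: if some fiber met two distinct hyperplanes of $\mathcal{A}\setminus \mathcal{A}_{Y}$ at a common point $p$, then setting $X\in L_{2}(\mathcal{A})$ to be their intersection, the condition $X+Y\in L(\mathcal{A})$ would force the fiber to lie inside a hyperplane of $\mathcal{A}_{Y}$, contradicting that the fiber meets $M(\mathcal{A})$.

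The first task is to check that each of the two middle convolutions restricts compatibly to the fibers of $\pi$. For the algebraic side, Haraoka's definition of $\mathrm{mc}_{\lambda}$ is engineered so that the restriction $\mathrm{mc}_{\lambda}(\nabla_{A})|_{L_{y}}$ is isomorphic to the Dettweiler--Reiter middle convolution of the Fuchsian restriction $\nabla_{A}|_{L_{y}}$; Assumption~\ref{as:generic0} supplies exactly the non-resonance conditions needed for this construction on each fiber. For the topological side, Definition~\ref{df:MC} is modeled on Katz's construction along $Y$-fibers, so that $\mathrm{MC}_{\chi}(\mathcal{L})|_{L_{y}\cap M(\mathcal{A})}$ is canonically identified with Katz's $\mathrm{MC}_{\chi}$ applied to $\mathcal{L}|_{L_{y}\cap M(\mathcal{A})}$. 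Applying the classical comparison of Proposition~\ref{prop:katzreduction} fiberwise to $\mathcal{L}=\mathrm{DR}_{\mathrm{Pf}}(\nabla_{A})$ then yields, for every $y\in B$, an isomorphism
\[
\mathrm{MC}_{\chi}\bigl(\mathrm{DR}_{\mathrm{Pf}}(\nabla_{A})\bigr)\big|_{L_{y}\cap M(\mathcal{A})}
\;\cong\;
\mathrm{DR}_{\mathrm{Pf}}\bigl(\mathrm{mc}_{\lambda}(\nabla_{A})\bigr)\big|_{L_{y}\cap M(\mathcal{A})}.
\]

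It remains to assemble these fiberwise isomorphisms into a global one on $M(\mathcal{A})$. The natural route is to realize both sides as the relative first cohomology of the same object: both $\mathrm{mc}_{\lambda}$ and $\mathrm{MC}_{\chi}$ arise, up to the ``middle'' (image-of-canonical) modification, from $R^{1}\pi_{*}$ of the tensor product of the solution sheaf of $\nabla_{A}$ with the relative Kummer local system built from $\chi$ along the fibers of $\pi$. This point of view yields the naturality of the fiberwise comparison in $y$, and therefore an isomorphism of local systems on all of $M(\mathcal{A})$. I expect the main obstacle to lie precisely in this naturality: one must verify that the ``middle'' operations on both sides correspond under the comparison, and this is where the non-resonance part of Assumption~\ref{as:generic0} is used most essentially, since it guarantees that the ranks of the various canonical morphisms do not jump across $B$, removing the need for a separate analysis near the boundary of the generic locus.
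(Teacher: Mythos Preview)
Your proposal has a genuine gap in the globalization step. You correctly observe that restricting $\mathrm{MC}_{\chi}$ to each fiber of $\pi_{Y}$ recovers Katz's middle convolution (this is Proposition~\ref{prop:katzreduction}) and that Haraoka's $\mathrm{mc}_{\lambda}$ restricts to the Dettweiler--Reiter middle convolution, so the one-variable comparison of \cite{DR07} produces an isomorphism on every fiber. But an isomorphism on each fiber is not an isomorphism of local systems on $M(\mathcal{A})$: you must still match the monodromy along loops that project nontrivially to the base $M(p\mathcal{A}_{Y})$, and nothing in your sketch controls this. Your proposed fix---``realize both sides as $R^{1}\pi_{*}$ of the same object''---is circular. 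The topological side $\mathrm{MC}_{\chi}(\mathcal{L})$ is by definition a relative cohomology, but $\mathrm{DR}_{\mathrm{Pf}}(\mathrm{mc}_{\lambda}(\nabla_{A}))$ is the solution sheaf of a connection specified through Haraoka's explicit residue matrices $c_{\lambda}(A)_{H}$; proving that these matrices compute the Gauss--Manin connection on relative de~Rham cohomology is exactly the content of the theorem (at the level of $c_{\lambda}$), not an ingredient you may assume.

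The paper avoids this problem by never restricting and globalizing. Instead it builds a \emph{global} period map $\mathrm{Per}_{Y,m_{0}}$ from $C_{\chi,!}(\mathcal{L}_{A})_{m_{0}}$ into germs of holomorphic sections on $M(\mathcal{A})$, sending a twisted cycle $[\sigma\otimes\phi(\mathbf{x},y)(z-y)^{\lambda}]$ to the tuple of integrals $\int_{\sigma}\langle\phi,e_{i}^{\vee}\rangle(z-y)^{\lambda}\,d\log f_{H}(\mathbf{x},y)$. Because the integrand already carries the full parameter $(\mathbf{x},z)\in M(\mathcal{A})$, compatibility with \emph{all} of $\pi_{1}(M(\mathcal{A}),m_{0})$---including the base direction---is automatic. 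The substantive work is then: (i) the image lands in the solution sheaf of $c_{\lambda}(\nabla_{A})$ by Haraoka's Proposition~2.1; (ii) the map is bijective, proved by exhibiting the explicit basis $\bigl\{[\tfrac{dy}{y-q_{H}}\otimes e^{\vee}]\bigr\}$ of the dual de~Rham cohomology under Assumption~\ref{as:generic0} and invoking non-degeneracy of the period pairing; and (iii) the ``middle'' pieces match, shown by an explicit residue/Cauchy computation identifying the image of $\bigoplus_{q}H_{1}(D^{*}_{\alpha}(q))$ with the solution sheaf of $\nabla^{K}_{c_{\lambda}(A)}\oplus\nabla^{L}_{c_{\lambda}(A)}$. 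None of these steps is visible in your sketch, and step~(ii) in particular is where Assumption~\ref{as:generic0} is genuinely used---not merely to prevent rank jumps, but to guarantee that the logarithmic forms already span the full cohomology.
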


This theorem generalizes Haraoka’s result (Theorem 5.5 in \cite{Har2}) for braid arrangements and Dettweiler–Reiter’s result (Theorem 4.7 in \cite{DR07}) for $\mathbb{C}\setminus\{n\text{-points}\}$.

As a corollary, we obtain the following statement, which ensures that the middle convolution functor preserves the solvability of the Riemann–Hilbert problem.

\begin{thm}[Theorem \ref{thm:RHMC}]
Let $\mathcal{L}\in \mathrm{Loc}(M(\mathcal{A}),\mathbb{C})$ be a local system satisfying property $\wp$ (see Definition \ref{df:prpp}), and let $\chi\colon \mathbb{Z}\to \mathbb{C}^{\times}$ be a nontrivial character. Then:
\begin{enumerate}
	\item If $\mathcal{L}$ admits a solution $\nabla_{A}\in \mathrm{Pf}(\log\mathcal{A})$ satisfying Assumption \ref{as:generic0} for some $\lambda\in \mathbb{C}\setminus\mathbb{Z}$ with $\chi(1)=\exp(2\pi i\lambda)$, then $\mathrm{MC}_{\chi}(\mathcal{L})$ also admits a solution.
	\item Conversely, if $\mathrm{MC}_{\chi}(\mathcal{L})$ admits a solution $\nabla_{A'}\in \mathrm{Pf}(\log\mathcal{A})$ satisfying Assumption \ref{as:generic0} for some $\lambda'\in \mathbb{C}\setminus\mathbb{Z}$ with $\chi(1)=\exp(-2\pi i\lambda')$, then $\mathcal{L}$ also admits a solution.
\end{enumerate}
\end{thm}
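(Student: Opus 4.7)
Both statements should follow formally from Theorem \ref{thm:introMCcomp} (compatibility of $\mathrm{MC}_{\chi}$ and $\mathrm{mc}_{\lambda}$ through $\mathrm{DR}_{\mathrm{Pf}}$) combined with the composition law Theorem \ref{thm:composition}. So the plan is to record the two short diagram chases and pinpoint where the hypotheses on $\lambda$, $\lambda'$, and property $\wp$ are used.

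For part (1), assume $\nabla_{A}\in \mathrm{Pf}(\log\mathcal{A})$ is a solution for $\mathcal{L}$ satisfying Assumption \ref{as:generic0} for the given $\lambda\in\mathbb{C}\setminus\mathbb{Z}$ with $\chi(1)=\exp(2\pi i\lambda)$. Applying $\mathrm{MC}_{\chi}$ to the isomorphism $\mathrm{DR}_{\mathrm{Pf}}(\nabla_{A})\cong \mathcal{L}$ and invoking Theorem \ref{thm:introMCcomp} yields
\[
	\mathrm{MC}_{\chi}(\mathcal{L})
	\cong \mathrm{MC}_{\chi}\circ \mathrm{DR}_{\mathrm{Pf}}(\nabla_{A})
	\cong \mathrm{DR}_{\mathrm{Pf}}\circ \mathrm{mc}_{\lambda}(\nabla_{A}),
\]
so that $\mathrm{mc}_{\lambda}(\nabla_{A})\in \mathrm{Pf}(\log\mathcal{A})$ is the desired solution. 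The only thing to be checked is that Haraoka's functor $\mathrm{mc}_{\lambda}$ does indeed send $\mathrm{Pf}(\log\mathcal{A})$ into itself, but this is built into its construction in \cite{Har1}.

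For part (2), I would reduce to part (1). Given a solution $\nabla_{A'}$ for $\mathrm{MC}_{\chi}(\mathcal{L})$ satisfying Assumption \ref{as:generic0} for $\lambda'\in \mathbb{C}\setminus\mathbb{Z}$ with $\chi(1)=\exp(-2\pi i\lambda')$, set $\chi'=\chi^{-1}$. Then $\chi'(1)=\exp(2\pi i\lambda')$ and $\chi'$ is still nontrivial. Applying part (1) to the local system $\mathrm{MC}_{\chi}(\mathcal{L})$, the connection $\nabla_{A'}$, and the character $\chi'$, one obtains that $\mathrm{MC}_{\chi^{-1}}(\mathrm{MC}_{\chi}(\mathcal{L}))$ admits the solution $\mathrm{mc}_{\lambda'}(\nabla_{A'})$. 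The composition law Theorem \ref{thm:composition}, together with the hypothesis that $\mathcal{L}$ satisfies property $\wp$, identifies $\mathrm{MC}_{\chi^{-1}}\circ \mathrm{MC}_{\chi}(\mathcal{L})$ with $\mathcal{L}$; transporting the isomorphism $\mathrm{DR}_{\mathrm{Pf}}(\mathrm{mc}_{\lambda'}(\nabla_{A'}))\cong \mathrm{MC}_{\chi^{-1}}(\mathrm{MC}_{\chi}(\mathcal{L}))$ along this yields a solution for $\mathcal{L}$.

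The main obstacle is to ensure that the genericity bookkeeping aligns: Theorem \ref{thm:introMCcomp} demands Assumption \ref{as:generic0} for the specific pair $(\nabla, \lambda)$, and one must check that this hypothesis on $\nabla_{A'}$ in part (2) is exactly what is needed when running part (1) with the inverse character, which is why the statement prescribes $\chi(1)=\exp(-2\pi i\lambda')$ rather than $\exp(2\pi i\lambda')$. Apart from this sign-matching, the argument is a purely formal two-line deduction; all the technical content has already been absorbed into Theorem \ref{thm:introMCcomp} and Theorem \ref{thm:composition}, and into the verification that property $\wp$ is the correct hypothesis for the composition law to collapse to the identity.
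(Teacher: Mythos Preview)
Your proposal is correct and matches the paper's proof essentially line for line: part (1) is exactly the chain $\mathrm{MC}_{\chi}(\mathcal{L})\cong \mathrm{MC}_{\chi}\circ\mathrm{DR}_{\mathrm{Pf}}(\nabla_{A})\cong \mathrm{DR}_{\mathrm{Pf}}(\mathrm{mc}_{\lambda}(\nabla_{A}))$ via Theorem~\ref{thm:MCdeRham}, and part (2) is the same chain applied to $(\nabla_{A'},\lambda',\chi^{-1})$ followed by the identification $\mathrm{MC}_{\chi^{-1}}\circ\mathrm{MC}_{\chi}(\mathcal{L})\cong\mathcal{L}$ from Theorem~\ref{thm:composition}. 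One small clarification: the fact that $\mathrm{mc}_{\lambda}$ lands back in $\mathrm{Pf}(\log\mathcal{A})$ is not automatic from \cite{Har1} but comes from the standing hypothesis that $Y$ is a good line, which forces $\mathcal{A}^{+Y}\subset\mathcal{A}$ (as noted at the start of Section~4).
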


\section{Fiber bundle structure on complement of hyperplane arrangement}
In \cite{Tera1}, Terao provided a characterization that equips 
the complements of central hyperplane arrangements 
with a fiber bundle structure.
This section shows that an analogous characterization 
remains valid even for affine hyperplane arrangements.
\subsection{Hyperplane arrangement with modular coatom}
This section gives a review of the work by Terao 
\cite{Tera1}, in which
he constructed a   
correspondence between 
hyperplane arrangements with a modular coatom and 
fiber bundle structures on their complements.

Let us fix an $\ell$-dimensional $\mathbb{C}$-vector space $V$.
A {\em central} hyperplane arrangement is a finite collection $\mathcal{A}$
of $(\ell-1)$-dimensional subspaces of $V$. 
Then we associate a lattice $L(\mathcal{A})$ 
with $\mathcal{A}$, called the {\em intersection lattice} 
which is defined as the collection of all intersections of elements of $\mathcal{A}$.
Here the partial order on $L(\mathcal{A})$ is usually defined by 
reverse inclusion. 
Therefore the {\em join} of $X,Y\in L(\mathcal{A})$
is defined as 
\[
X\lor Y:=X\cap Y, 
\]
and the {\em meet} as 
\[
X\land Y:=\bigcap_{\substack{Z\in L(\mathcal{A}),\\
X+Y\subset Z}}Z.
\]
Obviously, $V$ and $T(\mathcal{A}):=\bigcap_{H\in \mathcal{A}}H$ 
are the unique minimal and maximal elements respectively.

For $X\in L(\mathcal{A})$, the {\em rank} of $X$, denoted by 
$\mathrm{rk}(X)$, is defined as the codimension.
The subset consisting of elements of rank $k$ is denoted by
 $L_{k}(\mathcal{A}):=\{X\in L(\mathcal{A})\mid \mathrm{rk\,}X=k\}.$
Rank one elements are usually called {\em atoms},
also rank $\mathrm{rk\,}T(\mathcal{A})-1$ elements are called {\em coatoms}.
For the intersection lattice $L(\mathcal{A})$, 
the set of all atoms in $L(\mathcal{A})$
is exactly $\mathcal{A}$ itself.
Therefore $L(\mathcal{A})$ 
is an {\em atomic}, i.e.,
any $X\in L(\mathcal{A})$ is a join of atoms.
 
We say that $X\in L(\mathcal{A})$ is {\em modular}
if it satisfies 
\[
	\mathrm{rk}(X)+\mathrm{rk}(Y)=\mathrm{rk}(X\land Y)+\mathrm{rk}(X\lor Y)
\]
for all $Y\in L(\mathcal{A})$.
Since the equation 
\[
	\mathrm{codim}_{\mathbb{C}}X+\mathrm{codim}_{\mathbb{C}}Y
	=\mathrm{codim}_{\mathbb{C}}(X\cap Y)+\mathrm{codim}_{\mathbb{C}}(X+Y)
\]
holds any subspaces $X,Y\subset V$,
$X\in L(\mathcal{A})$ is modular exactly when 
\[
	X+Y\in L(\mathcal{A})
\]
holds for all $Y\in L(\mathcal{A})$.

We now consider the complement 
\[
	M(\mathcal{A}):=V\,\backslash \bigcup_{H\in \mathcal{A}}H
\]
of $\mathcal{A}$, and explain that a modular coatom 
yields a fibration on $M(\mathcal{A})$.
Let $Y\subset V$ be a linear subspace and 
$p_{Y}\colon V\rightarrow V/Y$ the natural projection.
Then the subarrangement $\mathcal{A}_{Y}:=\{H\in \mathcal{A}\mid 
H\supset Y\}$ defines the 
arrangement in $V/Y$,
\[
	p\mathcal{A}_{Y}:=\{p_{Y}(H)\mid H\in \mathcal{A}_{Y}\},
\]
and $\pi_{Y}:=p_{Y}|_{M(\mathcal{A})}$
defines the surjection 
\[
	\pi_{Y}\colon M(\mathcal{A})\rightarrow M(p\mathcal{A}_{Y}).
\]
Here we note that $Y$ is not necessarily an element in $L(\mathcal{A})$, and 
thus consider the maximal element with respect to the order in $L(\mathcal{A})$, which contains $Y$,
\[
		\langle Y\rangle :=\bigcap_{H\in \mathcal{A}_{Y}}H\in L(\mathcal{A}).
\]
% \begin{df}[Strictly linearly fibered complement]
% If there exists a $1$-dimensional subspace $Y\subset V$
% such that the surjection $\pi_{Y}\colon M(\mathcal{A})\rightarrow M(p\mathcal{A}_{Y})$
% becomes a fiber bundle,
% then we say that $M(\mathcal{A})$ is {\em strictly linearly fibered}.
% \end{df}
We are now ready to recall the theorem by Terao, which relates 
the following goodness condition for $Y$ and the fiber bundle structure on the projection
$\pi_{Y}\colon M(\mathcal{A})\rightarrow M(p\mathcal{A}_{Y})$.
\begin{df}
The above $Y$ is said to be {\em good} if either
\[
	p_{Y}(X)\cap M(p\mathcal{A}_{Y})=\emptyset\quad \text{ or }\quad p_{Y}(X)=V/Y
\]
hold for every $X\in L(\mathcal{A})$.
\end{df}
\begin{thm}[Terao, Theorem 2.9 in \cite{Tera1}]\label{thm:Terao}
	Suppose 
	$\mathrm{dim}_{\mathbb{C}}Y=1$.
	Then the following are equivalent.
	\begin{enumerate}
	\item The surjection $\pi_{Y}\colon M(\mathcal{A})\rightarrow M(p\mathcal{A}_{Y})$
	is a fiber bundle.
	\item Each fiber of $\pi_{Y}$ is $\mathbb{C}$ with $|\mathcal{A}\backslash \mathcal{A}_{Y}|$
	points removed.
	\item The line $Y$ is good.
	\item $\langle Y\rangle =T(\mathcal{A})$, or $\langle Y\rangle$ is modular of 
	rank $\mathrm{rk}(T(\mathcal{A}))-1$.
	\end{enumerate}
\end{thm}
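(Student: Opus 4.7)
The plan is to establish the cyclic chain $(1)\Rightarrow(2)\Rightarrow(3)\Rightarrow(4)\Rightarrow(1)$. The common geometric input is a fiber-by-fiber description of $\pi_{Y}$: for $\bar{v}\in M(p\mathcal{A}_{Y})$ the fiber $\pi_{Y}^{-1}(\bar{v})$ is the affine line $L_{\bar{v}}:=p_{Y}^{-1}(\bar{v})$ with its intersections with the hyperplanes in $\mathcal{A}$ removed. The hyperplanes $H\in \mathcal{A}_{Y}$ contribute nothing, because $\bar{v}\notin p_{Y}(H)$ forces $L_{\bar{v}}\cap H=\emptyset$, while each $H\in \mathcal{A}\setminus \mathcal{A}_{Y}$ meets $L_{\bar{v}}$ in exactly one point since $Y\not\subset H$ and $\dim Y=1$. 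Hence every fiber is $\mathbb{C}$ with \emph{at most} $|\mathcal{A}\setminus \mathcal{A}_{Y}|$ points removed, and the maximum is attained exactly when those intersection points are pairwise distinct.

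The implications $(1)\Rightarrow(2)\Rightarrow(3)$ then come out cleanly. For $(1)\Rightarrow(2)$, local triviality forces all fibers to have the same number of punctures; two distinct hyperplanes $H_{1},H_{2}\in \mathcal{A}\setminus \mathcal{A}_{Y}$ coincide on $L_{\bar{v}}$ exactly when $\bar{v}\in p_{Y}(H_{1}\cap H_{2})$, a proper subspace of $V/Y$, so any Zariski-generic $\bar{v}\in M(p\mathcal{A}_{Y})$ produces a fiber attaining the full count. For $(2)\Rightarrow(3)$ I would argue by contraposition: a direct case-check shows that any $X\in L(\mathcal{A})$ of rank at most one satisfies goodness, so if $X$ violates goodness it has rank at least two; picking $\bar{v}\in p_{Y}(X)\cap M(p\mathcal{A}_{Y})$ and $x\in X\cap L_{\bar{v}}$, every hyperplane $H\supset X$ must lie in $\mathcal{A}\setminus \mathcal{A}_{Y}$ (otherwise $L_{\bar{v}}\cap H=\emptyset$ would contradict $x\in H$), and all such $H$ meet $L_{\bar{v}}$ at the single point $x$, collapsing at least two of the would-be punctures and contradicting $(2)$.

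For $(3)\Rightarrow(4)$ the first move is a lattice-theoretic reformulation of goodness: since a finite union of linear subspaces of $V/Y$ containing the subspace $p_{Y}(X)$ must contain it in a single member, goodness is equivalent to the statement that for every $X\in L(\mathcal{A})$ either $X+Y=V$ or $X\subset H$ for some $H\in \mathcal{A}_{Y}$. Excluding the trivial case $Y\subset T(\mathcal{A})$ (equivalent to $\langle Y\rangle =T(\mathcal{A})$), the reformulation applied to $X=T(\mathcal{A})$ yields $T(\mathcal{A})+Y\subset H$ for some $H\in \mathcal{A}_{Y}$, and an inductive localization argument on the hyperplanes of $\mathcal{A}_{Y}$ yields $\langle Y\rangle =T(\mathcal{A})+Y$, of rank $\mathrm{rk}(T(\mathcal{A}))-1$. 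Modularity of $\langle Y\rangle $, equivalent to $\langle Y\rangle +X\in L(\mathcal{A})$ for all $X\in L(\mathcal{A})$, then splits into the case $X+Y=V$ (where $\langle Y\rangle +X=V$) and the case $X\subset H\in \mathcal{A}_{Y}$, which reduces to the restricted arrangement $\mathcal{A}^{H}$ on which we conclude by induction on $\dim V$.

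Finally, for $(4)\Rightarrow(1)$ I would construct explicit local trivializations. For each $H\in \mathcal{A}\setminus \mathcal{A}_{Y}$ the assignment $\bar{v}\mapsto L_{\bar{v}}\cap H$ defines a section $s_{H}$ of the affine line bundle $V\to V/Y$ restricted to $M(p\mathcal{A}_{Y})$, and the reformulation of goodness now implied by $(4)$ forces the images of these finitely many sections to be pairwise disjoint; combined with a linear splitting $V=Y\oplus V'$ identifying the bundle with $M(p\mathcal{A}_{Y})\times \mathbb{C}$, this presents $\pi_{Y}$ as a locally trivial fibration with fibers $\mathbb{C}$ minus $|\mathcal{A}\setminus \mathcal{A}_{Y}|$ points. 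The substantive obstacle is the middle step $(3)\Rightarrow(4)$: extracting the sharp rank equality $\mathrm{rk}(\langle Y\rangle )=\mathrm{rk}(T(\mathcal{A}))-1$ and modularity from the pointwise goodness condition requires the inductive lattice argument sketched above, and is where the combinatorial heart of Terao's theorem lies.
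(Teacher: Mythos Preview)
The paper does not prove this theorem: it is quoted with attribution to Terao and used as a black box. There is therefore no in-paper proof to compare your sketch against. The paper does prove the closely related Proposition \ref{prop:vclosed} and the affine analogue Theorem \ref{thm:TeraoOshima}, but in both places it \emph{invokes} Theorem \ref{thm:Terao} rather than reproving it.

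That said, your outline is largely sound. The implications $(1)\Rightarrow(2)\Rightarrow(3)$ and $(4)\Rightarrow(1)$ are handled correctly; the fiber-counting description and the disjoint-sections argument are the standard ones. Two remarks on $(3)\Rightarrow(4)$. First, the identity $\langle Y\rangle=Y+T(\mathcal{A})$ needs no ``inductive localization'': it is the one-line computation the paper gives just before Proposition \ref{prop:vclosed}, using only $Y+H=H$ for $H\in\mathcal{A}_{Y}$ and $Y+H'=V$ otherwise. Second, and more substantively, your reduction for modularity --- ``in the case $X\subset H\in\mathcal{A}_{Y}$, pass to the restricted arrangement $\mathcal{A}^{H}$ and induct on $\dim V$'' --- tacitly assumes that goodness of $Y$ is inherited by $\mathcal{A}^{H}$. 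This needs proof: applying your reformulation of goodness in $\mathcal{A}$ to $Z\in L(\mathcal{A}^{H})$ may yield only $Z\subset H$ itself, which gives no element of $(\mathcal{A}^{H})_{Y}$ containing $Z$ and hence no progress toward goodness in $\mathcal{A}^{H}$. The inheritance is true, but not immediate from the definition. A cleaner route, used in the paper's proof of Proposition \ref{prop:vclosed}, is to induct on $\mathrm{rk}(X)$ rather than on $\dim V$: write $X=H_{1}\cap\cdots\cap H_{k}$, choose $H_{2}\notin\mathcal{A}_{Y}$ if possible (otherwise $Y\subset X$ and there is nothing to prove), and use the modular-law identity $(H_{1}\cap\cdots\cap H_{k})+Y=\bigl((H_{1}\cap H_{2})+Y\bigr)\cap\bigl((H_{2}\cap\cdots\cap H_{k})+Y\bigr)$, valid because $Y\cap H_{2}=\{0\}$, to reduce to the rank-$2$ and rank-$(k-1)$ cases.
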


By following the argument in \cite{CDFSSTY}, we can give another characterization of goodness.
\begin{prop}\label{prop:vclosed}
Suppose $\mathrm{dim}_{\mathbb{C}}Y=1$. Then $Y$ is good if and only if
$X+Y \in L(\mathcal{A})$ for all $X\in L_{2}(\mathcal{A}).$
\end{prop}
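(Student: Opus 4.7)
The strategy is to rephrase goodness combinatorially and then treat the two implications separately. Since $p_Y(X)$ is an irreducible linear subspace of $V/Y$ and $\bigcup_{H\in\mathcal{A}_Y}p_Y(H)$ is a finite union of hyperplanes covering $V/Y\setminus M(p\mathcal{A}_Y)$, the condition $p_Y(X)\cap M(p\mathcal{A}_Y)=\emptyset$ forces $p_Y(X)\subset p_Y(H)$ for some $H\in\mathcal{A}_Y$; since $Y\subset H$, this is equivalent to $X\subset H$. Likewise $p_Y(X)=V/Y$ is equivalent to $X+Y=V$. Hence $Y$ is good if and only if every $X\in L(\mathcal{A})$ satisfies $X+Y=V$ or $X\subset H$ for some $H\in\mathcal{A}_Y$.

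For the forward direction, assume $Y$ is good and take $X\in L_2(\mathcal{A})$. If $Y\subset X$, then $X+Y=X\in L(\mathcal{A})$. Otherwise $X+Y$ has codimension $1$, so $X+Y\neq V$, and the reformulation yields $X\subset H$ for some $H\in\mathcal{A}_Y$; comparing codimensions gives $X+Y=H\in\mathcal{A}\subset L(\mathcal{A})$.

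For the converse, I would induct on $k=\mathrm{rk}(X)$ to show that every $X\in L_k(\mathcal{A})$ satisfies the reformulated condition. Cases $k=0,1$ are immediate from dimension counts. For $k=2$: if $Y\subset X=H_1\cap H_2$ then $H_1\in\mathcal{A}_Y$ already contains $X$; if $Y\not\subset X$ then the hypothesis places $X+Y\in L(\mathcal{A})$, and being a hyperplane containing $Y$ it belongs to $\mathcal{A}_Y$ and contains $X$. For $k\geq 3$, write $X=X'\cap H_0$ with $X'\in L_{k-1}(\mathcal{A})$ and $H_0\in\mathcal{A}$. The induction hypothesis applied to $X'$ yields either $X'\subset H$ for some $H\in\mathcal{A}_Y$ or $X'+Y=V$; the latter is impossible since $\mathrm{rk}(X')=k-1\geq 2$ would force $\dim(X'\cap Y)=2-k<0$. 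Hence $X'\subset H$ for some $H\in\mathcal{A}_Y$, and a fortiori $X\subset H$.

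The main delicate step is the initial reformulation, which extracts from the set-theoretic condition $p_Y(X)\cap M(p\mathcal{A}_Y)=\emptyset$ a clean combinatorial statement about containment in a single hyperplane via irreducibility of $p_Y(X)$. Once this is in hand, the rest is routine rank bookkeeping; in particular, the inductive argument never needs to invoke the hypothesis for ranks above $2$, because the $V$-filling alternative is ruled out purely by dimension.
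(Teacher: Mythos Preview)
Your proof is correct and takes a genuinely different route from the paper's. The paper's argument goes through Terao's Theorem~\ref{thm:Terao}: for the forward direction it deduces modularity of $\langle Y\rangle$ from goodness, whence $X+Y=X+\langle Y\rangle\in L(\mathcal{A})$ for every $X$; for the converse it proves by induction on rank that $X+Y\in L(\mathcal{A})$ for all $X\in L(\mathcal{A})$ (using the decomposition $(H_1\cap\cdots\cap H_k)+Y=((H_1\cap H_2)+Y)\cap((H_2\cap\cdots\cap H_k)+Y)$), thereby establishing modularity of $\langle Y\rangle$ and invoking Theorem~\ref{thm:Terao} once more. Your approach instead unpacks the definition of goodness directly into the dichotomy ``$X+Y=V$ or $X\subset H$ for some $H\in\mathcal{A}_Y$'' and verifies this by induction on rank, with the key observation that for $\mathrm{rk}(X')\geq 2$ the alternative $X'+Y=V$ is already excluded by dimension. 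This makes your argument entirely self-contained---it never appeals to Theorem~\ref{thm:Terao}---and explains transparently why the rank-$2$ hypothesis suffices. The paper's route, by contrast, yields as a byproduct the stronger intermediate statement that $X+Y\in L(\mathcal{A})$ for \emph{all} $X\in L(\mathcal{A})$, i.e.\ full modularity of $\langle Y\rangle$, which ties the result back into the lattice-theoretic framework.
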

Before the proof of this proposition, recall elementary properties of $\langle Y\rangle$.
First we notice that the equation holds,
\[
	\langle Y\rangle=Y+T(\mathcal{A}). 
\]
Indeed, we have 
\begin{align*}
Y+T(\mathcal{A})&=Y+\bigcap_{H\in \mathcal{A}}H=Y+\left(\left(\bigcap_{H\in \mathcal{A}_{Y}}H\right)\cap \left(\bigcap_{H'\in \mathcal{A}\backslash\mathcal{A}_{Y}}H'\right)\right)\\
&=\left(Y+\bigcap_{H\in \mathcal{A}_{Y}}H\right)\cap \bigcap_{H'\in\mathcal{A}\backslash\mathcal{A}_{Y}}(Y+H')
=\bigcap_{H\in \mathcal{A}_{Y}}H\cap \bigcap_{H'\in\mathcal{A}\backslash\mathcal{A}_{Y}}V\\
&=\bigcap_{H\in \mathcal{A}_{Y}}H=\langle Y\rangle.
\end{align*}
Here we note that the equation 
$X+(A\cap B)=(X+A)\cap (X+B)$
holds for any subspaces $X,A,B\subset V$,
and also note the equations 
$Y+H=Y$ for $H\in\mathcal{A}_{Y}$, and $Y+H'=V$ for $H'\in\mathcal{A}\backslash\mathcal{A}_{Y}.$

Next recall the equation
\[
	X+\langle Y\rangle =X+Y\quad (X\in L(\mathcal{A})),
\]
which follows from the fact $X+T(\mathcal{A})=X$ for $X\in L(\mathcal{A})$ and the above equation.
\begin{proof}[Proof of Proposition \ref{prop:vclosed}]
Suppose that $Y$ is good. Then since $\langle Y\rangle$ is modular by Theorem \ref{thm:Terao},
we obtain $X+Y=X+\langle Y\rangle \in L(\mathcal{A})$ for any $X\in L(\mathcal{A})$.
Thus the only if part holds.

Now we show the if part. 
If $Y\subset T(\mathcal{A})$, then $\langle Y\rangle=Y+T(\mathcal{A})=T(\mathcal{A})$
which implies goodness obviously. Thus we may assume $Y\nsubset T(\mathcal{A})$ and 
it suffices to show that $\langle Y\rangle$ is modular of rank $\mathrm{rk}(T(\mathcal{A}))-1$.
Since the rank condition follows from the 
equation $\langle Y\rangle =Y+T(\mathcal{A})$, we need to show $X+\langle Y\rangle \in L(\mathcal{A})$ for all $X\in L(\mathcal{A})$. 

Suppose the above holds for $L_{k-1}(\mathcal{A})$ for $k>2$, and take $X\in L_{k}(\mathcal{A})$.
We may suppose $X\nsubset Y$ since we have $X+\langle Y\rangle =X+Y=X$ otherwise.
Let us write
\[
	X=H_{1}\cap \ldots \cap H_{k}
\]
by some $H_{1},\ldots,H_{k}\in \mathcal{A}.$ Then we have 
\begin{equation*}
	X+\langle Y\rangle=X+Y=(H_{1}\cap \ldots \cap H_{k})+Y	
	=((H_{1}\cap H_{2})+Y)\cap ((H_{2}\cap\ldots\cap H_{k})+Y).
\end{equation*}
Since $H_{2}\cap\ldots\cap H_{k}\in L_{k-1}(\mathcal{A})$, the induction hypothesis shows that 
$(H_{2}\cap\ldots\cap H_{k})+Y=(H_{2}\cap\ldots\cap H_{k})+\langle Y\rangle \in L(\mathcal{A})$.
Also by the assumption, $(H_{1}\cap H_{2})+Y\in L(\mathcal{A})$ since $H_{1}\cap H_{2}\in L_{2}(\mathcal{A})$.
We conclude $X+\langle Y\rangle \in L(\mathcal{A})$.
Therefore the desired condition follows by the induction on $k$.
\end{proof}
\subsection{Fibration on affine hyperplane arrangement}
From now on, we remove the assumption that the arrangement is central. 
Namely, let $\mathcal{A}$ denote a finite collection of 
affine hyperplanes of the $\ell$-dimensional affine space $V\cong \mathbb{C}^{l}$.
As well as the central case,  
the {\em intersection poset} $L(\mathcal{A})$, which 
is not a lattice in general. The complement is denoted by $M(\mathcal{{A}})$ 
as before.

It is well-known that affine arrangements and central arrangements 
are related to each other by means of their cones and decones which 
are explained below. 
Let us fix a coordinate system $V\cong \mathbb{C}^{l}=\{(x_{1},\ldots,x_{l})
\mid x_{i}\in \mathbb{C}\}$.
Then an affine hyperplane $H\in L(\mathcal{A})$ is the  
zero locus of the polynomial $f_{H}(x)=L_{H}(x)+a_{H}$
with linear homogeneous polynomial $L_{H}(x)$ and a constant $a_{H}$.
We call this $f_{H}(x)$ the {\em defining linear polynomial} of $H$.
Then the {\em defining polynomial} of the arrangement 
$\mathcal{A}$ is the product of them,
\[
	Q_{\mathcal{A}}(x):=\prod_{H\in \mathcal{A}}(L_{H}(x)+a_{H}).
\]    

\begin{df}[cone of affine arrangement]
Let $\mathcal{A}$ be an arrangement of affine hyperplanes in $\mathbb{C}^{l}$.
Then the {\em cone} $\mathbf{c}\mathcal{A}$ of $\mathcal{A}$
is the central arrangement of $\mathbb{C}^{l+1}$ with the defining polynomial
\[
	Q_{\mathbf{c}\mathcal{A}}(x_{0},x_{1},\ldots,x_{l})
	:=x_{0}^{m+1}\cdot Q_{\mathcal{A}}(x_{1}/x_{0},\ldots,x_{l}/x_{0}),
\]
where $m$ is the degree of $Q_{\mathcal{A}}(x)$.
Note that $\mathbf{c}\mathcal{A}$ contains the additional hyperplane 
$H_{0}=\{x_{0}=0\}$.
\end{df} 
We can obviously recover the affine arrangement $\mathcal{A}$
from the cone $\mathbf{c}\mathcal{A}$ by restricting to $x_{0}=1$.
\begin{df}[decone of central arrangement]
Let $\mathcal{B}$ be a central arrangement in $\mathbb{C}^{l+1}
=\{(x_{0},x_{1},\ldots,x_{l})\mid x_{i}\in \mathbb{C}\}$.
Then the {\em decone} $\mathbf{d}\mathcal{\mathcal{B}}$ of $\mathcal{B}$
is the affine arrangement of $\mathbb{C}^{l}$ with the defining polynomial
\[
	Q_{\mathbf{d}\mathcal{B}}(x_{1},\ldots,x_{l})
	:=Q_{\mathcal{B}}(1,x_{1},\ldots,x_{l}).
\]
\end{df}
Therefore, we have the natural inclusion 
\[
	M(\mathcal{A})=M(\mathbf{c}\mathcal{A})\cap \{x_{0}=1\}\subset 
	M(\mathbf{c}\mathcal{A}),
\]
by regarding $\mathcal{A}$ as the decone of $\mathbf{c}\mathcal{A}$.
Also for intersection posets, there is the natural inclusion
\[
	L(\mathcal{A})\cong\{X\in L(\mathbf{c}\mathcal{A})\mid X\nsubset \{x_{0}=0\}\}
	\subset L(\mathbf{c}\mathcal{A}).
\]

The characterization of the goodness in Proposition \ref{prop:vclosed}
allows us to 
define good lines even for the affine case as follows.
\begin{df}\label{df:good}
Let $Y$ be an affine line in $\mathbb{C}^{l}$ passing through the origin $\mathbf{0}$.
Then $Y$ is said to be {\em good} if 
$X+Y\in L(\mathcal{A})$ for all $X\in L_{2}(\mathcal{A})$.
In this case, we also say $\mathcal{A}$ is {\em $Y$-closed}.
\end{df}
\begin{rem}
The terminology {\em $Y$-closed} was introduced in \cite{Oshi3} by Oshima.
\end{rem}
The purpose of this section is to show that 
Theorem \ref{thm:Terao} by Terao is still valid even for affine arrangements.
For an affine arrangement $\mathcal{A}$ and an affine line $Y$ through $\mathbf{0}$,
we replace the definition of $\mathcal{A}_{Y}$ previously defined 
for the central case, with the following one,
$\mathcal{A}_{Y}:=\{H\in \mathcal{A}\mid H\text{ is parallel to }Y\}.$
\begin{thm}\label{thm:TeraoOshima}
Let $\mathcal{A}$ be an affine arrangement in $\mathbb{C}^{l}$
and $Y$ an affine line in $\mathbb{C}^{l}$ through the origin.
The following are equivalent.
	\begin{enumerate}
	\item The surjection $\pi_{Y}\colon M(\mathcal{A})\rightarrow M(p\mathcal{A}_{Y})$
	is a fiber bundle.
	\item Each fiber of $\pi_{Y}$ is $\mathbb{C}$ with $|\mathcal{A}\backslash \mathcal{A}_{Y}|$
	points removed.
	\item The line $Y$ is good in the sense of Definition \ref{df:good}.
	\end{enumerate}
\end{thm}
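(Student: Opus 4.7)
My plan is to deduce Theorem \ref{thm:TeraoOshima} from the central-case Theorem \ref{thm:Terao} by lifting everything to the cone. Fix a nonzero direction vector $v$ of $Y$ and set $\tilde{Y}:=\mathbb{C}\cdot(0,v)\subset\mathbb{C}^{l+1}$, a one-dimensional linear subspace through the origin of $\mathbb{C}^{l+1}$. Since $\tilde{H}\in\mathbf{c}\mathcal{A}$ has equation $L_H(x)+a_Hx_0=0$, the condition $(0,v)\in\tilde{H}$ reads $L_H(v)=0$, i.e., $H\in\mathcal{A}_Y$. Therefore $(\mathbf{c}\mathcal{A})_{\tilde{Y}}=\{H_0\}\cup\{\tilde{H}:H\in\mathcal{A}_Y\}$ and $|\mathbf{c}\mathcal{A}\setminus(\mathbf{c}\mathcal{A})_{\tilde{Y}}|=|\mathcal{A}\setminus\mathcal{A}_Y|$. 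Moreover, since $M(\mathbf{c}\mathcal{A})\subset\{x_0\neq 0\}$, the scaling map $(x_0,x)\mapsto(x_0,x/x_0)$ induces a diffeomorphism $M(\mathbf{c}\mathcal{A})\cong\mathbb{C}^\times\times M(\mathcal{A})$, and similarly $M(p(\mathbf{c}\mathcal{A})_{\tilde{Y}})\cong\mathbb{C}^\times\times M(p\mathcal{A}_Y)$. Under these identifications $\pi_{\tilde{Y}}$ becomes $\mathrm{id}_{\mathbb{C}^\times}\times\pi_Y$, so conditions (1) and (2) for $(\mathbf{c}\mathcal{A},\tilde{Y})$ are literally equivalent to their counterparts for $(\mathcal{A},Y)$, with fibers naturally matched.

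The main step is to show that goodness of $Y$ in the sense of Definition \ref{df:good} is equivalent to goodness of $\tilde{Y}$ in the central sense of Proposition \ref{prop:vclosed}. A rank-$2$ element $X\in L_2(\mathbf{c}\mathcal{A})$ falls into three cases: (i) $X=\tilde{H}\cap H_0$ for some $H\in\mathcal{A}$; (ii) $X=\tilde{H}_1\cap\tilde{H}_2$ with $H_1,H_2\in\mathcal{A}$ affinely parallel, so $X\subset H_0$; or (iii) $X=\tilde{H}_1\cap\tilde{H}_2$ with $H_1\cap H_2\in L_2(\mathcal{A})$. In (i) and (ii), a direct calculation shows that $X+\tilde{Y}$ is automatically in $L(\mathbf{c}\mathcal{A})$ (it equals $X$ itself, or $H_0$, or an intersection of $H_0$ with the cone of a suitable hyperplane). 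In (iii), parameter elimination in the Minkowski sum shows that $(\tilde{H}_1\cap\tilde{H}_2)+\tilde{Y}$ is precisely the cone over $(H_1\cap H_2)+Y$, so it lies in $L(\mathbf{c}\mathcal{A})$ if and only if $(H_1\cap H_2)+Y$ lies in $L(\mathcal{A})$. Since every rank-$2$ element of $L(\mathcal{A})$ has the form $H_1\cap H_2$, the two goodness conditions match.

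Applying Theorem \ref{thm:Terao} to the central pair $(\mathbf{c}\mathcal{A},\tilde{Y})$ then yields the equivalence of (1), (2), and the central goodness of $\tilde{Y}$; transporting via the identifications above produces the desired equivalence of the three affine conditions in Theorem \ref{thm:TeraoOshima}. The main obstacle I anticipate lies in case (iii) of the preceding paragraph, namely the explicit computation that homogenization commutes with the operation of adding $Y$, together with the bookkeeping that a subspace $\tilde{S}\in L(\mathbf{c}\mathcal{A})$ with $\tilde{S}\not\subset H_0$ corresponds exactly to an element $S\in L(\mathcal{A})$ via the decone, so that membership in $L(\mathbf{c}\mathcal{A})$ genuinely encodes membership in $L(\mathcal{A})$ for the subspaces that arise.
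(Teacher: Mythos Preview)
Your argument is correct, and the cone-lifting strategy is essentially the same device the paper uses, but you deploy it more uniformly than the paper does. The paper proceeds asymmetrically: it cites Terao's original argument for $1\Leftrightarrow 2$; for $3\Rightarrow 1$ it proves only the one-way implication that affine goodness of $Y$ implies central goodness of the lifted line (your cases (i) and (iii), essentially), and then transfers the fibration property back via the pullback square~\eqref{eq:pullback} rather than your product decomposition $M(\mathbf{c}\mathcal{A})\cong\mathbb{C}^{\times}\times M(\mathcal{A})$; finally, for $2\Rightarrow 3$ it does \emph{not} go through the cone at all but gives a direct argument in coordinates, observing that if two hyperplanes not parallel to $Y$ meet over a point of $M(p\mathcal{A}_{Y})$ then the reduced defining polynomial $Q^{\mathrm{red}}_{\mathcal{A}}(\mathbf{x}_0,y)$ acquires a double root, contradicting condition~2. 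Your route establishes the full two-way equivalence of goodness at the cone level (which requires your additional case (ii) for parallel affine hyperplanes and the converse bookkeeping $L(\mathbf{c}\mathcal{A})\supset\tilde S\not\subset H_0 \Leftrightarrow S\in L(\mathcal{A})$) and thereby reads off all three equivalences at once from Theorem~\ref{thm:Terao}. The payoff of your approach is a cleaner, symmetric reduction to the central case; the payoff of the paper's is that the direct $2\Rightarrow 3$ argument is elementary and avoids the combinatorial case analysis of $L_2(\mathbf{c}\mathcal{A})$ in the reverse direction.
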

For the proof of this theorem, we investigate some properties of 
the good line $Y$ in the cone $\mathbf{c}\mathcal{A}$. 
Let us regard $M(\mathcal{A})$ as a subspace 
of $M(\mathbf{c}\mathcal{A})$ as above.
Define the $1$-dimensional subspace $Y_{\mathbf{0}}:=Y-e_{0}$
of $\mathbb{C}^{l+1}$ under the parallel translation by $e_{0}:=(1,0,\ldots,0)\in \mathbb{C}^{l+1}$.
Then the diagram 
\begin{equation}\label{eq:pullback}
\begin{tikzcd}
M(\mathcal{A})\arrow[d,"p_{Y}"]\arrow[r,hookrightarrow]&M(\mathbf{c}\mathcal{A})\arrow[d,"p_{Y_{\mathbf{0}}}"]\\
M(p\mathcal{A}_{Y})\arrow[r,hookrightarrow]&M(p(\mathbf{c}\mathcal{A})_{Y_{\mathbf{0}}})
\end{tikzcd}
\end{equation}
is obviously a pullback in the category of topological spaces.
\begin{prop}
	If $Y$ is good in the affine arrangement $\mathcal{A}$, 
	then $Y_{\mathbf{0}}$ is also good in the central arrangement 
	$\mathbf{c}\mathcal{A}$. 
\end{prop}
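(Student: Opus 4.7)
The plan is to apply Proposition \ref{prop:vclosed} to the central arrangement $\mathbf{c}\mathcal{A}$: it suffices to show $X+Y_{\mathbf{0}}\in L(\mathbf{c}\mathcal{A})$ for every $X\in L_{2}(\mathbf{c}\mathcal{A})$. Such an $X$ falls into one of two types according to its position relative to $H_{0}=\{x_{0}=0\}$. Either (i) $X\not\subset H_{0}$, in which case the natural inclusion $L(\mathcal{A})\hookrightarrow L(\mathbf{c}\mathcal{A})$ identifies $X$ with $\mathbf{c}X'$ for a unique $X'\in L_{2}(\mathcal{A})$; or (ii) $X\subset H_{0}$, in which case $X$ has codimension one inside $H_{0}$ and we may write $X=H_{0}\cap\mathbf{c}H$ for some $H\in\mathcal{A}$.

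For case (i), the $Y$-closedness of $\mathcal{A}$ yields $X'+Y\in L(\mathcal{A})$, and the crux of the argument is to identify $\mathbf{c}X'+Y_{\mathbf{0}}$ with $\mathbf{c}(X'+Y)$. I would establish this by writing $X'=\{L_{1}=a_{1},\,L_{2}=a_{2}\}$ and $Y=\mathbb{C}v$, then forming the linear combination $M:=L_{2}(v)L_{1}-L_{1}(v)L_{2}$, which vanishes on $Y$ and is constant equal to $L_{2}(v)a_{1}-L_{1}(v)a_{2}$ on $X'$; after homogenization, both $\mathbf{c}X'+Y_{\mathbf{0}}$ and $\mathbf{c}(X'+Y)$ are contained in the hyperplane $\{M(x)=(L_{2}(v)a_{1}-L_{1}(v)a_{2})x_{0}\}$, and a dimension count completes the equality whenever $v$ lies outside the linear part of $X'$. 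The remaining sub-case, where $v$ belongs to the linear part of $X'$, is trivial: there $Y_{\mathbf{0}}\subset\mathbf{c}X'$ and $X'+Y=X'$, so both sides equal $\mathbf{c}X'$. In every case $\mathbf{c}(X'+Y)\in L(\mathbf{c}\mathcal{A})$ via the natural inclusion, so the verification is complete.

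For case (ii), I would compute $X+Y_{\mathbf{0}}$ by direct linear algebra. The condition $H\in\mathcal{A}_{Y}$ is equivalent to $L_{H}(v)=0$. If $H\in\mathcal{A}_{Y}$, then $Y_{\mathbf{0}}\subset X$ and so $X+Y_{\mathbf{0}}=X\in L(\mathbf{c}\mathcal{A})$; otherwise $L_{H}(v)\neq 0$, and translating $X$ along $Y_{\mathbf{0}}$ sweeps out the whole hyperplane $H_{0}$, yielding $X+Y_{\mathbf{0}}=H_{0}\in L(\mathbf{c}\mathcal{A})$. The main obstacle of the whole proof lies in case (i), where establishing $\mathbf{c}X'+Y_{\mathbf{0}}=\mathbf{c}(X'+Y)$ requires some care in tracking the homogenization of defining equations and carrying out the accompanying dimension count; the rest is routine.
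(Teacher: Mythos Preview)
Your proof is correct and shares the same overall skeleton as the paper's: apply Proposition~\ref{prop:vclosed} and split rank-2 elements of $L(\mathbf{c}\mathcal{A})$ according to their relation with $H_{0}$. Your case~(ii) coincides with the paper's first case (one of the $H_{i}$ can be taken to be $H_{0}$), and your case-split based on whether $X\subset H_{0}$ is arguably cleaner than the paper's, which is phrased in terms of a choice of $H_{1},H_{2}$.

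The real difference lies in case~(i). You establish the identity $\mathbf{c}X'+Y_{\mathbf{0}}=\mathbf{c}(X'+Y)$ by an explicit linear-algebra computation with defining forms: you build the combination $M=L_{2}(v)L_{1}-L_{1}(v)L_{2}$, homogenize, and finish by a dimension count. The paper instead invokes an external structural fact (Lemma~2.1 in Oshima's paper) asserting that, since $Y$ is good, one of the hyperplanes through $X'$ can be chosen parallel to $Y$; then $Y_{\mathbf{0}}$ lies in the corresponding coned hyperplane and the conclusion is immediate without any computation. Your route is self-contained and makes the geometry of the cone construction transparent, at the cost of a few lines of calculation; the paper's route is shorter but outsources the key step to a reference. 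Both are valid, and your identity $\mathbf{c}X'+Y_{\mathbf{0}}=\mathbf{c}(X'+Y)$ is in fact a pleasant general fact worth isolating.
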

\begin{proof}
It suffices to show that $Y_{\mathbf{0}}$ satisfies the condition 
in Proposition \ref{prop:vclosed}.
Let us take $X\in L_{2}(\mathbf{c}\mathcal{A})$
and write $X=H_{1}\cap H_{2}$ by $H_{i}\in \mathbf{c}\mathcal{A}$.

First consider the case that either $H_{1}$ or $H_{2}$ is $H_{0}=\{x_{0}=0\}$.
Since $Y_{\mathbf{0}}$ is contained in $H_{0}$,
$Y_{\mathbf{0}}+X$ equals to $X$ or $H_{0}$ in this case. Thus $Y_{\mathbf{0}}+X\in L(\mathcal{A})$.

Next we assume that both $H_{i}$ are coming from $\mathcal{A}$
and write the corresponding affine hyperplanes by $\mathbf{d}H_{i}\in \mathcal{A}$.
Then since $Y$ is good,
we may assume that $Y$ is parallel to either of $\mathbf{d}H_{i}$ 
(see  $(iii)$ and $(v)$ in Lemma 2.1 in \cite{Oshi3}).
Thus $Y_{\mathbf{0}}$ is contained in either of $H_{i}$.
Then it follows from the same argument as above that $Y+X\in L(\mathcal{A})$.
\end{proof}
\begin{proof}[Proof of Theorem \ref{thm:TeraoOshima}]
The equivalence of 1 and 2 follows from the same argument in Theorem 2.9 in \cite{Tera1}.
Let us see the equivalence of 2 and 3.
First suppose that 3 holds. Then the above proposition assures that $Y_{\mathbf{0}}$
is good.  
Since 
$p_{Y_{\mathbf{0}}}$ is a fibration by Theorem \ref{thm:Terao},
and the diagram $(\ref{eq:pullback})$
is a pullback, $p_{Y}$ is also a fibration. Thus 1 holds.

Conversely, suppose that 2 holds. Then we can show that 3 holds by a similar argument 
as Theorem 3.25 in \cite{CDFSSTY} as follows.
Under a linear transformation, we may assume that $Y$ is the $x_{l}$-axis and 
the projection $p_{Y}\colon M(\mathcal{A})\rightarrow M(\mathcal{A}_{Y})$
is given by $(x_{1},\ldots,x_{l-1},x_{l})\mapsto (x_{1},\ldots,x_{l-1}).$
To distinguish $x_{l}$ from the other coordinates, put $x_{l}=y$.
Then the condition 2 implies that 
the defining polynomial $Q_{\mathcal{A}}$ has no multiple roots as the polynomial 
of $y$ for any $\mathbf{x}=(x_{1},\ldots,x_{l-1})\in M(p\mathcal{A}_{Y}).$

Let us decompose $Q_{\mathcal{A}}$ as 
\[
	Q_{\mathcal{A}}=\prod_{H\in \mathcal{A}_{Y}}(L_{H}(\mathbf{x})+\alpha_{H})\cdot
	\prod_{H'\notin \mathcal{A}_{Y}}(L_{H'}(\mathbf{x},y)+\alpha_{H'}).
\]
We now take $X\in L_{2}(\mathcal{A})$ and write $X=H_{1}\cap H_{2}$
by $H_{i}\in \mathcal{A}$.
If both of $H_{i}$ are parallel to $Y$, then $X+Y=X\in L(\mathcal{A})$.
If one is parallel to $Y$, put it as $H_{1}$, and the other is not, then $X+Y=H_{1}$.
Finally assume both of $H_{i}$ are not parallel to $Y$.
Then for $(\mathbf{x}_{0},y_{0})\in H_{1}\cap H_{2}$,
$\prod_{H'\notin \mathcal{A}_{Y}}(L_{H'}(\mathbf{x}_{0},y)+\alpha_{H'})$
has $y_{0}$ as a double root. Thus $\prod_{H\in \mathcal{A}_{Y}}(L_{H}(\mathbf{x}_{0})+\alpha_{H})$
must be zero by the assumption. This means that there exists $H\in \mathcal{A}_{Y}$
such that $(\mathbf{x}_{0},y)\in H$ for all $y\in \mathbb{C}$, i,e, $(x_{0},y_{0})+Y\in H$.
Since $\mathcal{A}$ is a finite set,
this implies that $(H_{1}\cap H_{2})+Y=H\in L(\mathcal{A})$.
In conclusion, we have $X+Y\in L(\mathcal{A})$ in all cases, i.e. the condition 3 holds.
\end{proof}

\section{Middle convolution on complements of hyperplane arrangements with a good line}
Let $\mathcal{A}$ be an affine arrangement in $\mathbb{C}^{l}=\{(x_{1},\ldots,x_{l})\mid x_{i}\in \mathbb{C}\}$
with a good line $Y$. Under a linear transformation, we may assume that 
$Y$ is the $x_{l}$-axis and then put $y=x_{l}$.
Let $\mathrm{Sh}(M(\mathcal{A}),\mathbb{C})$ denote the 
category of sheaves of $\mathbb{C}$-vector spaces over $M(\mathcal{A})$
and $D(M(\mathcal{A}),\mathbb{C})$ denote its derived category.
Also $D^{b}(M(\mathcal{A}),\mathbb{C})$ denote the full subcategory 
consisting of bounded complexes in $D(M(\mathcal{A}),\mathbb{C})$.  
Let 
$\mathrm{Loc}(M(\mathcal{A}),\mathbb{C})$
denote the full subcategory
of $\mathrm{Sh}(M(\mathcal{A}),\mathbb{C})$
consisting  
of locally constant sheaves of finite dimensional $\mathbb{C}$-vector spaces, i.e., 
the category of finite dimensional local systems over $M(\mathcal{A})$.
Then we also define a full subcategory of $D^{b}(M(\mathcal{A}),\mathbb{C})$ by
\[
	D_{\mathrm{loc}}^{b}(M(\mathcal{A}),\mathbb{C})
	:=\{\mathcal{F}\in D^{b}(M(\mathcal{A}),\mathbb{C})
	\mid \mathsf{H}^{k}(\mathcal{F})\in \mathrm{Loc}(M(\mathcal{A}),\mathbb{C})
	\text{ for all }k\in \mathbb{Z} \}.
\]
Here $\mathsf{H}^{k}\colon D(M(\mathcal{A}),\mathbb{C})
\rightarrow \mathrm{Sh}(M(\mathcal{A}),\mathbb{C})$
is the $k$-th cohomology functor.

\subsection{Complement of simple Weierstrass polynomial}
This section introduces simple Weierstrass polynomials and recall some properties 
of  their complements.

Let $X$ be a path-connected topological space with the homotopy type of a CW-complex.
A {\em simple Weierstrass polynomial} was introduced by Hansen (cf.\,\cite{Han})
as a function
$f\colon X\times \mathbb{C}\rightarrow \mathbb{C}$ of the form 
\[
	f(x,y)=y^{n}+\sum_{i=0}^{n-1}a_{i}(x)y^{i}
\]
with continuous functions $a_{i}\colon X\rightarrow \mathbb{C}$ such that
for each $x_{0}\in X$, the polynomial $f(x_{0},y)$ of $y$ 
has $n$ distinct roots.
In particular, if $f(x,y)$ has the global factorization
\[
	f(x,y)=\prod_{i=1}^{n}(y-t_{i}(x))
\]
with continuous functions $t_{i}\colon X\rightarrow \mathbb{C}$,
we say that the simple Weierstrass polynomial $f$ is {\em completely solvable}.

The complement of the zero locus 
\[
	C(f):=\{(x,y)\in X\times \mathbb{C}\mid f(x,y)\neq 0\}
\]
of the polynomial combined with the natural projection
$\pi \colon C(f)\rightarrow X$ was shown to be a fiber bundle
as follows.
\begin{thm}[Hansen \cite{Han}, \text{M\o ller} \cite{Mol}, Cohen-Suciu \cite{CS}]\label{thm:moller}
	The above natural projection $\pi \colon C(f)\rightarrow X$ 
	is a locally trivial bundle with the fiber $\mathbb{C}\backslash \{n\text{-points}\}$,
	whose structure group 
	is the Artin braid group $B_{n}$
	of $n$-strings. 
	In particular when $f$ is completely solvable, 
	then the structure group reduces to the pure braid group $P_{n}$.
\end{thm}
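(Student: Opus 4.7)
The plan is to realize $\pi\colon C(f)\to X$ as the pullback of a universal bundle over the configuration space of $n$ points in $\mathbb{C}$, and then to invoke the classical identification of the braid group with the fundamental group of this space. Specifically, I would introduce the ordered and unordered configuration spaces
\[
F(\mathbb{C},n)=\bigl\{(z_{1},\ldots,z_{n})\in \mathbb{C}^{n}\bigm| z_{i}\neq z_{j}\text{ for }i\neq j\bigr\},\qquad C_{n}(\mathbb{C})=F(\mathbb{C},n)/S_{n},
\]
together with the tautological complement bundle
\[
\mathcal{E}_{n}:=\bigl\{\,([z_{1},\ldots,z_{n}],w)\in C_{n}(\mathbb{C})\times\mathbb{C}\,\bigm|\, w\neq z_{i}\text{ for each }i\,\bigr\}\to C_{n}(\mathbb{C}),
\]
and its obvious lift to a bundle over $F(\mathbb{C},n)$. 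Local triviality of $\mathcal{E}_{n}$ is exhibited at a configuration $[z_{1}^{0},\ldots,z_{n}^{0}]$ by choosing pairwise disjoint open disks $D_{i}\ni z_{i}^{0}$ and constructing, on the neighborhood of configurations with exactly one point in each $D_{i}$, a continuous family of compactly supported self-homeomorphisms of $\mathbb{C}$ moving the base configuration to the varying one (a standard bump-function interpolation).

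The next step is to check that the \emph{root map}
\[
\tau\colon X\longrightarrow C_{n}(\mathbb{C}),\qquad x\longmapsto\bigl\{\,t\in\mathbb{C}\bigm| f(x,t)=0\bigr\},
\]
is continuous, which reduces via Rouch\'e's theorem applied to each disk $D_{i}$ to the fact that the roots of a monic polynomial with simple roots depend continuously on its coefficients. The map $\tau$ together with the evaluation $C(f)\to \mathbb{C}$ identifies $C(f)\to X$ with the pullback $\tau^{*}\mathcal{E}_{n}\to X$, so $\pi$ inherits local triviality with fiber $\mathbb{C}\setminus\{n\text{-points}\}$. If $f$ is completely solvable, the factorization provides a continuous lift $\tilde{\tau}\colon X\to F(\mathbb{C},n)$ of $\tau$, so in that case $\pi$ factors through a pullback from the ordered configuration space.

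Finally, I would identify the structure group. The transition functions of $\mathcal{E}_{n}$ are built from the isotopies described above, hence lie in the mapping class group of $\mathbb{C}\setminus\{n\text{-points}\}$ relative to the end at infinity, which is $B_{n}$; conversely, following loops in $C_{n}(\mathbb{C})$ realizes every braid, so the structure group is exactly $B_{n}$. Pulling back to $F(\mathbb{C},n)$ in the completely solvable case removes the permutation of the punctures, leaving $P_{n}$. The main obstacle, in my view, is precisely this last identification: tightening the structure group from $\mathrm{Homeo}(\mathbb{C}\setminus\{n\text{-points}\})$ down to $B_{n}$ requires either an explicit cocycle computation or, more cleanly, the observation that $C_{n}(\mathbb{C})$ is a $K(B_{n},1)$ (Fadell--Neuwirth), so that Steenrod's classification forces the structure group to be the image of the monodromy representation $\pi_{1}(C_{n}(\mathbb{C}))\to \pi_{0}\mathrm{Homeo}(\mathbb{C}\setminus\{n\text{-points}\})$, which is the classical isomorphism $B_{n}\cong \mathrm{MCG}(\mathbb{C}\setminus\{n\text{-points}\})$.
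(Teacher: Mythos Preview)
The paper does not give its own proof of this theorem: it is stated with attribution to Hansen, M\o ller, and Cohen--Suciu and then invoked as a black box (e.g.\ in the proof of Proposition~\ref{prop:loctriv}). So there is no paper proof to compare against.

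That said, your sketch is the standard argument and is essentially what one finds in the cited references. Realizing $C(f)$ as the pullback of the tautological complement bundle over $C_{n}(\mathbb{C})$ via the root map, using continuous dependence of roots on coefficients for continuity of $\tau$, and lifting to $F(\mathbb{C},n)$ in the completely solvable case is exactly the construction in Hansen and M\o ller. Your handling of the structure group is also the right idea: the cleanest route is indeed the Fadell--Neuwirth identification $\pi_{1}(C_{n}(\mathbb{C}))\cong B_{n}$ together with the fact that $C_{n}(\mathbb{C})$ is aspherical, so the bundle is classified by its monodromy in $\pi_{0}\mathrm{Homeo}(\mathbb{C}\setminus\{n\text{ points}\})\cong B_{n}$. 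One minor point worth tightening is your local trivialization: rather than building ad hoc bump-function isotopies, one can simply observe that $\mathcal{E}_{n}\to C_{n}(\mathbb{C})$ is itself the Fadell--Neuwirth fibration $C_{n+1}(\mathbb{C})\to C_{n}(\mathbb{C})$ forgetting the last point, whose local triviality is classical.
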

We call the projection $\pi \colon C(f)\rightarrow X$
the {\em polynomial complement fibration} associated to the simple Weierstrass polynomial $f$.

\subsection{Middle convolution on complements with a good line}
We now return to the complement $M(\mathcal{A})$ of an affine arrangement $\mathcal{A}$
with a good line $Y$.
Let us take the pullback of the fibration $\pi_{Y}\colon M(\mathcal{A})\rightarrow
M(p\mathcal{A}_{Y})$ by itself,
\[
	\begin{tikzcd}
		M(\mathcal{A})\times_{\pi_{Y}}M(\mathcal{A})\arrow[r,"\mathrm{pr}_{2}"]\arrow[d,"\mathrm{pr}_{1}"]&
		M(\mathcal{A})\arrow[d,"\pi_{Y}"]\\
		M(\mathcal{A})\arrow[r,"\pi_{Y}"]&M(p\mathcal{A}_{Y})
	\end{tikzcd}.
\]
Then since $\pi_{Y}$ is a locally trivial fibration by Theorem \ref{thm:TeraoOshima},
projections $\mathrm{pr}_{i}$, $i=1,2$, are also locally trivial fibrations.

Let us look at the fibrations $\mathrm{pr}_{i}$, $i=1,2$, closer.
Under the description 
$M(\mathcal{A})=\{(\mathbf{x},y)\in \mathbb{C}^{l-1}\times \mathbb{C}\mid 
Q_{\mathcal{A}}(\mathbf{x},y)\neq 0\}$,
the pullback is written as
\begin{align*}
	M(\mathcal{A})\times_{\pi_{Y}}M(\mathcal{A})
	&\cong \{((\mathbf{x},y),(\mathbf{x}',y'))\in (\mathbb{C}^{l-1}\times \mathbb{C})^{2}
	\mid Q_{\mathcal{A}}(\mathbf{x},y)\neq 0,Q_{\mathcal{A}}(\mathbf{x}',y')\neq 0, \mathbf{x}=\mathbf{x}'\}\\
	&\cong\{((\mathbf{x},y),z)\in M(\mathcal{A})\times \mathbb{C}
	\mid  Q_{\mathcal{A}}(\mathbf{x},z)\neq 0\}.
\end{align*}
We moreover consider the open subspace of $M(\mathcal{A})\times_{\pi_{Y}}M(\mathcal{A})$
by removing the diagonal,
\begin{align*}
M(\mathcal{A})\times_{\pi_{Y}}M(\mathcal{A})\backslash 
\Delta&:=
\{(m_{1},m_{2})\in M(\mathcal{A})^{2}\mid m_{1}\neq m_{2},\ \pi_{Y}(m_{1})=\pi_{Y}(m_{2})\}\\
&=\{((\mathbf{x},y),z)\in M(\mathcal{A})\times \mathbb{C}
	\mid  Q_{\mathcal{A}}(\mathbf{x},z)(z-y)\neq 0\}\\
&=\{((\mathbf{x},y),z)\in M(\mathcal{A})\times \mathbb{C}
	\mid  Q^{\mathrm{red}}_{\mathcal{A}}(\mathbf{x},z)(z-y)\neq 0\}.
\end{align*}
Here
\[
	Q^{\mathrm{red}}_{\mathcal{A}}(\mathbf{x},y):=\prod_{H'\in \mathcal{A}\backslash \mathcal{A}_{Y}}(L_{H'}(\mathbf{x},y)+\alpha_{H'})
\]
from the decomposition 
\[
	Q_{\mathcal{A}}(\mathbf{x},y)=\prod_{H\in  \mathcal{A}_{Y}}(L_{H}(\mathbf{x})+\alpha_{H})\cdot
	\prod_{H'\in \mathcal{A}\backslash \mathcal{A}_{Y}}(L_{H'}(\mathbf{x},y)+\alpha_{H'})
\]
of the defining polynomial $Q_{\mathcal{A}}(\mathbf{x},y)$.

Then, since $Q^{\mathrm{red}}_{\mathcal{A}}(\mathbf{x},z)(z-y)$ is a simple Weierstrass polynomial
 in the variable $z$ over the base space $M(\mathcal{A})$,
we obtain the following.
\begin{prop}\label{prop:loctriv}
The subspace $M(\mathcal{A})\times_{\pi_{Y}}M(\mathcal{A})\backslash 
\Delta$ of the pullback $M(\mathcal{A})\times_{\pi_{Y}}M(\mathcal{A})$
with the natural projections 
\[
\mathrm{pr_{i}}\colon M(\mathcal{A})\times_{\pi_{Y}}M(\mathcal{A})\backslash 
\Delta
\rightarrow M(\mathcal{A}),\quad i=1,2
\] 
are locally trivial fibrations with the fiber $\mathbb{C}$ with $|\mathcal{A}\backslash\mathcal{A}_{Y}|+1$ points removed. 
\end{prop}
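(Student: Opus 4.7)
The plan is to recognize both projections as polynomial complement fibrations in the sense of Hansen–M\o ller–Cohen–Suciu and invoke Theorem \ref{thm:moller}. For $\mathrm{pr}_1$, the description of $M(\mathcal{A})\times_{\pi_{Y}}M(\mathcal{A})\setminus\Delta$ already given in the excerpt identifies it verbatim with $C(F)$ for
\[
	F(\mathbf{x},y,z):=Q^{\mathrm{red}}_{\mathcal{A}}(\mathbf{x},z)\,(z-y),
\]
regarded as a polynomial in $z$ over the base $M(\mathcal{A})$ with coordinates $(\mathbf{x},y)$. First I would check that $F$ is (up to division by a nonzero constant) a monic Weierstrass polynomial in $z$: since each $H'\in\mathcal{A}\setminus\mathcal{A}_{Y}$ is not parallel to $Y=\{x_l\text{-axis}\}$, the coefficient of $z$ in $L_{H'}(\mathbf{x},z)$ is a nonzero constant, so the $z$-leading coefficient of $F$ is the nonzero constant $\prod_{H'\in\mathcal{A}\setminus\mathcal{A}_{Y}}c_{H'}$, and after rescaling, $F$ becomes monic of degree $|\mathcal{A}\setminus\mathcal{A}_{Y}|+1$ in $z$.

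Next I would verify the simplicity of roots fiberwise. For any $(\mathbf{x},y)\in M(\mathcal{A})$, Theorem \ref{thm:TeraoOshima} asserts that $\pi_{Y}^{-1}(p_{Y}(\mathbf{x},y))$ is $\mathbb{C}$ with exactly $|\mathcal{A}\setminus\mathcal{A}_{Y}|$ points removed; those removed points are precisely the zeros in $z$ of $Q^{\mathrm{red}}_{\mathcal{A}}(\mathbf{x},z)$, which are therefore pairwise distinct. Since $(\mathbf{x},y)\in M(\mathcal{A})$ forces $Q^{\mathrm{red}}_{\mathcal{A}}(\mathbf{x},y)\neq 0$, the extra linear factor $(z-y)$ contributes one further simple root disjoint from the previous ones. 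Hence $F(\mathbf{x},y,\cdot)$ has $|\mathcal{A}\setminus\mathcal{A}_{Y}|+1$ distinct roots for every base point, and $F$ is a simple Weierstrass polynomial. Applying Theorem \ref{thm:moller} gives that $\mathrm{pr}_{1}\colon C(F)\to M(\mathcal{A})$ is a locally trivial bundle with the claimed fiber.

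For $\mathrm{pr}_{2}$, I would exploit the symmetry of the fiber product: the swap $((\mathbf{x},y),z)\leftrightarrow((\mathbf{x},z),y)$ is a homeomorphism of $M(\mathcal{A})\times_{\pi_{Y}}M(\mathcal{A})\setminus\Delta$ intertwining $\mathrm{pr}_{1}$ and $\mathrm{pr}_{2}$, since the defining condition $Q^{\mathrm{red}}_{\mathcal{A}}(\mathbf{x},z)(z-y)\neq 0$ together with $(\mathbf{x},y)\in M(\mathcal{A})$ is equivalent, using $\mathbf{x}\in M(p\mathcal{A}_{Y})$, to $Q^{\mathrm{red}}_{\mathcal{A}}(\mathbf{x},y)(y-z)\neq 0$ together with $(\mathbf{x},z)\in M(\mathcal{A})$. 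Thus $\mathrm{pr}_{2}$ is identified with the polynomial complement fibration of
\[
	G(\mathbf{x},z,y):=Q^{\mathrm{red}}_{\mathcal{A}}(\mathbf{x},y)\,(y-z)
\]
in $y$ over the base $M(\mathcal{A})$ with coordinates $(\mathbf{x},z)$, and the same verification applies.

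The only real content is the simple Weierstrass verification, and this is where I expect the mild subtlety: one must observe that the distinctness of the roots of $Q^{\mathrm{red}}_{\mathcal{A}}(\mathbf{x},\cdot)$ is not automatic but is exactly the information packaged in condition~2 of Theorem \ref{thm:TeraoOshima}, together with the fact that removing the diagonal precisely accounts for the extra linear factor $(z-y)$, which is guaranteed simple by $(\mathbf{x},y)\in M(\mathcal{A})$. Once this observation is in place, everything else is a direct application of the cited results.
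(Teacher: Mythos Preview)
Your proposal is correct and follows essentially the same approach as the paper: the paper asserts just before the proposition that $Q^{\mathrm{red}}_{\mathcal{A}}(\mathbf{x},z)(z-y)$ is a simple Weierstrass polynomial in $z$ over $M(\mathcal{A})$ and then writes only ``Theorem~\ref{thm:moller} directly shows the claim.'' You have simply filled in the details the paper leaves implicit---the verification of simplicity via Theorem~\ref{thm:TeraoOshima} and the symmetry argument handling $\mathrm{pr}_{2}$---both of which are exactly the right justifications.
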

\begin{proof}
Theorem \ref{thm:moller} directly shows the claim.
\end{proof}

The middle convolution was introduced by Katz \cite{Katz} as 
an endofunctor on the category of local systems on the Riemann sphere 
with a finite number of punctures.
We now give an generalization of the middle convolution functor 
for local systems over $M(\mathcal{A})$.

Let us take a nontrivial multiplicative character $\chi\colon \mathbb{Z}\rightarrow \mathbb{C}^{\times}$ and 
consider the associated rank $1$ local system $\mathcal{K}_{\chi}$
on $\mathbb{C}^{\times}$ with some fixed base point in $\mathbb{C}^{\times}$.
Then for a local system $\mathcal{L}\in \mathrm{Loc}(M(\mathcal{A}),\mathbb{C})$ on $M(\mathcal{A})$,
we can define the external tensor product of them through the projection maps,
$\mathrm{pr}_{1}\colon M(\mathcal{A})\times_{\pi_{Y}}M(\mathcal{A})\backslash 
\Delta\rightarrow M(\mathcal{A})$ and 
\[
(z-y)\colon M(\mathcal{A})\times_{\pi_{Y}}M(\mathcal{A})\backslash 
	\Delta
	\ni ((\mathbf{x},y),z)
	\mapsto z-y\in \mathbb{C}^{\times}.
\]
Namely,
\[
	\mathcal{L}_{\mathbf{x},y}\boxtimes \chi_{(z-y)}:=
	\mathrm{pr}_{1}^{*}\mathcal{L}\otimes (z-y)^{*}\mathcal{K}_{\chi}.
\]
Regard $\mathcal{L}_{\mathbf{x},y}\boxtimes \chi_{(z-y)}$ as an object 
in $D_{\mathrm{loc}}^{b}(M(\mathcal{A})\times_{\pi_{Y}}M(\mathcal{A})\backslash 
\Delta,\mathbb{C})$
in the natural way.
Then, since $\mathrm{pr}_{2}\colon M(\mathcal{A})\times_{\pi_{Y}}M(\mathcal{A})\backslash 
\Delta\rightarrow M(\mathcal{A})$
is a locally trivial fibration,
the direct image functors $\mathrm{pr}_{2\,*}$ and $\mathrm{pr}_{2\,!}$ 
define the 
well-defined right derived functors (cf.\,Theorem 1.9.5 in \cite{Ach})
\[
R\mathrm{pr}_{2\,*}, R\mathrm{pr}_{2\,!} \colon 
D_{\mathrm{loc}}^{b}(M(\mathcal{A})\times_{\pi_{Y}}M(\mathcal{A})\backslash 
\Delta,\mathbb{C})\rightarrow D_{\mathrm{loc}}^{b}(M(\mathcal{A})\times_{\pi_{Y}}M(\mathcal{A})\backslash 
\Delta,\mathbb{C}).
\]
We can consequently define the local system by
\[
	\mathrm{MC}_{\chi}(\mathcal{L}):=\mathrm{Im}(R^{1}\mathrm{pr}_{2\,!}(\mathcal{L}_{\mathbf{x},y}\boxtimes \chi_{(z-y)})
	\rightarrow R^{1}\mathrm{pr}_{2\,*}(\mathcal{L}_{\mathbf{x},y}\boxtimes \chi_{(z-y)}))
	\in \mathrm{Loc}(M(\mathcal{A}),\mathbb{C}).
\] 
\begin{df}[Middle convolution]\label{df:MC}
	Fix a nontrivial multiplicative character $\chi\colon \mathbb{Z}\rightarrow \mathbb{C}^{\times}$.
	Then the {\em middle convolution functor} with respect to $\chi$ is defined by
	\[
		\mathrm{MC}_{\chi}\colon \mathrm{Loc}(M(\mathcal{A}),\mathbb{C})
		\rightarrow \mathrm{Loc}(M(\mathcal{A}),\mathbb{C});\quad
		\mathcal{L}\mapsto \mathrm{MC}_{\chi}(\mathcal{L}).			
	\]
\end{df}
For simplicity, we also use the following notations
\begin{align*}
C_{\chi\,*}(\mathcal{L}):=&R^{1}\mathrm{pr}_{2\,*}(\mathcal{L}_{\mathbf{x},y}\boxtimes \chi_{(z-y)}),
&C_{\chi\,!}(\mathcal{L}):=&R^{1}\mathrm{pr}_{2\,!}(\mathcal{L}_{\mathbf{x},y}\boxtimes \chi_{(z-y)}).
\end{align*}
\subsection{Comparison with Katz middle convolution}
Let us recall 
the Katz middle convolution for $\mathbb{C}\backslash \{n\text{-points}\}$
by following \cite{Katz}.
Take $\mathcal{L}\in \mathrm{Loc}(\mathbb{C}\backslash \{n\text{-points}\},\mathbb{C})$
and a nontrivial character $\chi\colon \mathbb{Z}\rightarrow \mathbb{C}^{\times}$.
Let $\left(\mathbb{C}\backslash \{n\text{-points}\}\right)^{2}\backslash \Delta$
be the copy of $\mathbb{C}\backslash \{n\text{-points}\}$, the diagonal  removed.
Then the projection on each component,
$\mathrm{pr}_{i}\colon \left(\mathbb{C}\backslash \{n\text{-points}\}\right)^{2}\backslash \Delta\rightarrow \mathbb{C}\backslash \{n\text{-points}\}$,
is known to be a locally trivial fibration.
We also consider the projection 
\[
	(z-y)\colon \left(\mathbb{C}\backslash \{n\text{-points}\}\right)^{2}\backslash \Delta
	\ni (y,z)\mapsto z-y\in \mathbb{C}^{\times}.
\]
By taking the exterior tensor product 
\[
\mathcal{L}_{y}\boxtimes \chi_{(z-y)}:=
	\mathrm{pr}_{1}^{*}\mathcal{L}\otimes (z-y)^{*}\mathcal{K}_{\chi}
	\in \mathrm{Loc}(\left(\mathbb{C}\backslash \{n\text{-points}\}\right)^{2}\backslash \Delta,\mathbb{C}),
\]
we then define the local system 
\[
	\mathrm{MC}_{\chi}^{\mathrm{Katz}}(\mathcal{L}):=
	\mathrm{Im}(R^{1}\mathrm{pr}_{2\,!}(\mathcal{L}_{y}\boxtimes \chi_{(z-y)})
	\rightarrow R^{1}\mathrm{pr}_{2\,*}(\mathcal{L}_{y}\boxtimes \chi_{(z-y)}))
\]
on $\mathbb{C}\backslash\{n\text{-points}\}$,
similarly as in the previous section. The {\em Katz middle convolution} is 
the endofunctor 
defined by
\[
	\mathrm{MC}_{\chi}^{\mathrm{Katz}}\colon \mathrm{Loc}(\mathbb{C}\backslash \{n\text{-points}\},\mathbb{C})
	\rightarrow \mathrm{Loc}(\mathbb{C}\backslash \{n\text{-points}\},\mathbb{C});\quad
	\mathcal{L}\mapsto \mathrm{MC}_{\chi}^{\mathrm{Katz}}(\mathcal{L}).			
\]

	We now compare the Katz middle convolution and the one for $M(\mathcal{A})$.
Let us fix a base point $\mathbf{x}_{0}\in M(p\mathcal{A}_{Y})$
from the base space of the fibration $\pi_{Y}\colon M(\mathcal{A})\rightarrow M(p\mathcal{A}_{Y})$.
Then the fiber $\pi_{Y}^{-1}(\mathbf{x}_{0})$ is written as 
\[
	\pi_{Y}^{-1}(\mathbf{x}_{0})=\{(\mathbf{x}_{0},y)\in M(p\mathcal{A}_{Y})\times \mathbb{C}
	\mid Q_{\mathcal{A}}^{\mathrm{red}}(\mathbf{x}_{0},y)\neq 0\}.
\]
Therefore, 
denoting the set of $n=|\mathcal{A}\backslash \mathcal{A}_{Y}|$-distinct roots 
of $Q_{\mathcal{A}}^{\mathrm{red}}(\mathbf{x}_{0},y)$ by
\[
	Q_{n}^{\mathbf{x}_{0}}:=\{y\in \mathbb{C}\mid Q_{\mathcal{A}}^{\mathrm{red}}(\mathbf{x}_{0},y)= 0\},
\]
we can identify the fiber with $\mathbb{C}\backslash Q_{n}^{\mathbf{x}_{0}}.$

The closed embedding
\[
	\iota_{\mathbf{x}_{0}}\colon \pi_{Y}^{-1}(\mathbf{x}_{0})=\mathbb{C}\backslash Q_{n}^{\mathbf{x}_{0}}
	\hookrightarrow M(\mathcal{A}).
\]
of the fiber yields the cartesian squares,
\[
\begin{tikzcd}
\left(\mathbb{C}\backslash Q_{n}^{\mathbf{x}_{0}}\right)^{2}\backslash \Delta
\arrow[r,"\widetilde{\iota}_{\mathbf{x}_{0}}", hookrightarrow]\arrow[d,"\mathrm{pr}_{i}"]&
M(\mathcal{A})\times_{\pi_{Y}}M(\mathcal{A})\backslash \Delta\arrow[d,"\mathrm{pr}_{i}"]\\
\mathbb{C}\backslash Q_{n}^{\mathbf{x}_{0}}\arrow[r,"\iota_{\mathbf{x}_{0}}", hookrightarrow]
&M(\mathcal{A})
\end{tikzcd}
\]
for $i=1,2$.
Then by recalling that the pullback functor $\iota_{\mathbf{x}_{0}}^{*}$ is an exact functor and
the projections $\mathrm{pr}_{i}$, $i=1,2$, in Proposition \ref{prop:loctriv} are locally trivial,
we obtain the following isomorphisms,
\begin{align*}
\iota_{\mathbf{x}_{0}}^{*}\mathrm{MC}_{\chi}(\mathcal{L})&=
\iota_{\mathbf{x}_{0}}^{*}\mathrm{Im}(R^{1}\mathrm{pr}_{2\,!}(\mathcal{L}_{\mathbf{x},y}\boxtimes \chi_{(z-y)})
	\rightarrow R^{1}\mathrm{pr}_{2\,*}(\mathcal{L}_{\mathbf{x},y}\boxtimes \chi_{(z-y)}))\\
	&\cong \mathrm{Im}(\iota_{\mathbf{x}_{0}}^{*}R^{1}\mathrm{pr}_{2\,!}(\mathcal{L}_{\mathbf{x},y}\boxtimes \chi_{(z-y)})
	\rightarrow \iota_{\mathbf{x}_{0}}^{*}R^{1}\mathrm{pr}_{2\,*}(\mathcal{L}_{\mathbf{x},y}\boxtimes \chi_{(z-y)}))\\
	&\cong \mathrm{Im}(R^{1}\mathrm{pr}_{2\,!}(\widetilde{\iota}_{\mathbf{x}_{0}})^{*}(\mathcal{L}_{\mathbf{x},y}\boxtimes \chi_{(z-y)})
	\rightarrow R^{1}\mathrm{pr}_{2\,*}(\widetilde{\iota}_{\mathbf{x}_{0}})^{*}(\mathcal{L}_{\mathbf{x},y}\boxtimes \chi_{(z-y)})).
\end{align*}
Here for the last isomorphism, we used the proper base change isomorphism
$\iota_{\mathbf{x}_{0}}^{*}R^{1}\mathrm{pr}_{2\,!} \cong R^{1}\mathrm{pr}_{2\,!}(\widetilde{\iota}_{\mathbf{x}_{0}})^{*}$
and the base change isomorphism for locally trivial fibrations
$\iota_{\mathbf{x}_{0}}^{*}R^{1}\mathrm{pr}_{2\,*} \cong R^{1}\mathrm{pr}_{2\,*}(\widetilde{\iota}_{\mathbf{x}_{0}})^{*}$.
The above diagram moreover gives the following isomorphisms,
\begin{equation*}
(\widetilde{\iota}_{\mathbf{x}_{0}})^{*}(\mathcal{L}_{\mathbf{x},y}\boxtimes \chi_{(z-y)})=(\widetilde{\iota}_{\mathbf{x}_{0}})^{*}(\mathrm{pr}_{1}^{*}\mathcal{L}\otimes (z-y)^{*}\chi)\\
\cong \mathrm{pr}_{1}^{*}\iota_{\mathbf{x}_{0}}^{*}\mathcal{L}\otimes \iota_{\mathbf{x}_{0}}^{*}(z-y)^{*}\chi
=(\iota_{\mathbf{x}_{0}}^{*}\mathcal{L})_{y}\boxtimes \chi_{(z-y)}.
\end{equation*}
Therefore combining these isomorphisms, we obtain the following.
\begin{prop}\label{prop:katzreduction}
For a nontrivial character $\chi\colon \mathbb{Z}\rightarrow \mathbb{C}^{\times}$,
there exists an isomorphism of functors  
\[
	\iota_{\mathbf{x}_{0}}^{*}\circ \mathrm{MC}_{\chi}\cong  \mathrm{MC}_{\chi}^{\mathrm{Katz}}\circ \iota_{\mathbf{x}_{0}}^{*}.
\]
\end{prop}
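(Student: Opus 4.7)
The plan is to unfold both sides of the desired isomorphism from the definitions of the two middle convolution functors and then show, step by step, that the natural operations (exact pullback, base change, external tensor product) let one pass $\iota_{\mathbf{x}_{0}}^{*}$ through the direct image functors and through the $\boxtimes$. The template is essentially the chain of isomorphisms already displayed just above the statement; the work is to verify that each step is legitimate.

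First, I would rewrite $\iota_{\mathbf{x}_{0}}^{*}\mathrm{MC}_{\chi}(\mathcal{L})$ by using the definition
\[
\mathrm{MC}_{\chi}(\mathcal{L})=\mathrm{Im}\bigl(R^{1}\mathrm{pr}_{2\,!}(\mathcal{L}_{\mathbf{x},y}\boxtimes \chi_{(z-y)})\rightarrow R^{1}\mathrm{pr}_{2\,*}(\mathcal{L}_{\mathbf{x},y}\boxtimes \chi_{(z-y)})\bigr),
\]
and observing that the inverse image functor $\iota_{\mathbf{x}_{0}}^{*}$ is exact on sheaves of $\mathbb{C}$-vector spaces and therefore commutes with the formation of the image of a sheaf morphism. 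This reduces the problem to producing compatible base change isomorphisms for $R^{1}\mathrm{pr}_{2\,!}$ and $R^{1}\mathrm{pr}_{2\,*}$ along the cartesian square involving $\iota_{\mathbf{x}_{0}}$ and $\widetilde{\iota}_{\mathbf{x}_{0}}$.

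Next, I would invoke two base change isomorphisms. For the $!$-side the standard proper base change theorem yields $\iota_{\mathbf{x}_{0}}^{*}R\mathrm{pr}_{2\,!}\cong R\mathrm{pr}_{2\,!}\widetilde{\iota}_{\mathbf{x}_{0}}^{*}$, which then descends to the first cohomology sheaves. For the $*$-side we are not in a proper situation, so this step is the main obstacle: it relies on the fact, guaranteed by Proposition \ref{prop:loctriv}, that $\mathrm{pr}_{2}$ is a locally trivial fibration, which makes the stalk of $R^{1}\mathrm{pr}_{2\,*}$ at a point computable from the cohomology of a fiber and thus insensitive to shrinking the base. Concretely, using the fact that both $\mathrm{pr}_{2}$ squares in
\[
\begin{tikzcd}
\left(\mathbb{C}\backslash Q_{n}^{\mathbf{x}_{0}}\right)^{2}\backslash \Delta
\arrow[r,"\widetilde{\iota}_{\mathbf{x}_{0}}", hookrightarrow]\arrow[d,"\mathrm{pr}_{2}"]&
M(\mathcal{A})\times_{\pi_{Y}}M(\mathcal{A})\backslash \Delta\arrow[d,"\mathrm{pr}_{2}"]\\
\mathbb{C}\backslash Q_{n}^{\mathbf{x}_{0}}\arrow[r,"\iota_{\mathbf{x}_{0}}", hookrightarrow]
&M(\mathcal{A})
\end{tikzcd}
\]
are pullbacks of locally trivial fibrations, the restriction of $R\mathrm{pr}_{2\,*}$ to the fiber over $\mathbf{x}_{0}$ is computed fiberwise, giving the desired isomorphism $\iota_{\mathbf{x}_{0}}^{*}R^{1}\mathrm{pr}_{2\,*}\cong R^{1}\mathrm{pr}_{2\,*}\widetilde{\iota}_{\mathbf{x}_{0}}^{*}$.

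Finally, I would check that the coefficient sheaf is correctly transported. Since $\iota_{\mathbf{x}_{0}}$ and $\widetilde{\iota}_{\mathbf{x}_{0}}$ fit into the pullback square making the horizontal compositions commute, pullback along $\widetilde{\iota}_{\mathbf{x}_{0}}$ intertwines $\mathrm{pr}_{1}^{*}$ with itself and leaves $(z-y)$ unchanged, so
\[
\widetilde{\iota}_{\mathbf{x}_{0}}^{*}\bigl(\mathrm{pr}_{1}^{*}\mathcal{L}\otimes (z-y)^{*}\mathcal{K}_{\chi}\bigr)\cong \mathrm{pr}_{1}^{*}(\iota_{\mathbf{x}_{0}}^{*}\mathcal{L})\otimes (z-y)^{*}\mathcal{K}_{\chi}=(\iota_{\mathbf{x}_{0}}^{*}\mathcal{L})_{y}\boxtimes \chi_{(z-y)}.
\]
Combining this with the two base change isomorphisms and the commutation of $\iota_{\mathbf{x}_{0}}^{*}$ with images, the right-hand side becomes exactly the definition of $\mathrm{MC}_{\chi}^{\mathrm{Katz}}(\iota_{\mathbf{x}_{0}}^{*}\mathcal{L})$, and the naturality of every map used ensures the resulting isomorphism is functorial in $\mathcal{L}$. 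The delicate point throughout is the non-proper base change for $R^{1}\mathrm{pr}_{2\,*}$; once that is granted from local triviality, the rest is a formal diagram chase.
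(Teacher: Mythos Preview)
Your proposal is correct and follows exactly the paper's approach: the paper's proof is precisely the chain of isomorphisms displayed in the paragraphs preceding the proposition, using exactness of $\iota_{\mathbf{x}_{0}}^{*}$ to commute with $\mathrm{Im}$, proper base change for $R^{1}\mathrm{pr}_{2\,!}$, local triviality of $\mathrm{pr}_{2}$ (Proposition~\ref{prop:loctriv}) for the non-proper base change on $R^{1}\mathrm{pr}_{2\,*}$, and then the identification $(\widetilde{\iota}_{\mathbf{x}_{0}})^{*}(\mathcal{L}_{\mathbf{x},y}\boxtimes \chi_{(z-y)})\cong (\iota_{\mathbf{x}_{0}}^{*}\mathcal{L})_{y}\boxtimes \chi_{(z-y)}$. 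You have correctly isolated the one nontrivial point, namely the $*$-base change along the locally trivial fibration.
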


Let us look at 
stalks of the middle convolution functor.
Let us take $m_{0}=(\mathbf{x}_{0},y_{0})\in M(\mathcal{A})$
and consider the inclusion maps
$i_{m_{0}}\colon \{m_{0}\}\hookrightarrow M(\mathcal{A})$, 
$i_{m_{0}}^{\mathbf{x}_{0}}\colon \{m_{0}\}\hookrightarrow \pi_{Y}^{-1}(\mathbf{x}_{0})
=\mathbb{C}\backslash Q_{n}^{\mathbf{x}_{0}}$.
Then by taking into account the factorization 
$i_{m_{0}}=\iota_{\mathbf{x}_{0}}\circ i_{m_{0}}^{\mathbf{x}_{0}}$,
we obtain the following diagram,
\[
\begin{tikzcd}
\mathbb{C}\backslash (Q_{n+1}^{m_{0}})\arrow[r,"\widetilde{i}_{m_{0}}^{\mathbf{x}_{0}}",hookrightarrow]\arrow[d]&\left(\mathbb{C}\backslash Q_{n}^{\mathbf{x}_{0}}\right)^{2}\backslash \Delta
\arrow[r,"\widetilde{\iota}_{\mathbf{x}_{0}}", hookrightarrow]\arrow[d,"\mathrm{pr}_{i}"]&
M(\mathcal{A})\times_{\pi_{Y}}M(\mathcal{A})\backslash \Delta\arrow[d,"\mathrm{pr}_{i}"]\\
\{m_{0}\}\arrow[r,"i_{m_{0}}^{\mathbf{x}_{0}}"]&\mathbb{C}\backslash Q_{n}^{\mathbf{x}_{0}}\arrow[r,"\iota_{\mathbf{x}_{0}}", hookrightarrow]
&M(\mathcal{A})
\end{tikzcd},
\]
where all squares are cartesian. Here we put $Q_{n+1}^{m_{0}}:=Q_{n}^{\mathbf{x}_{0}}\sqcup \{y_{0}\}$,
and the leftmost vertical arrow is 
the canonical map to the singleton set $\{m_{0}\}$ which is the terminal object
of the category of topological spaces.
Then by noting that all vertical arrows are locally trivial fibrations,
base change theorems yield the following isomorphism,
\begin{align*}
&\mathrm{MC}_{\chi}(\mathcal{L})_{m_{0}}=i_{m_{0}}^{*}\mathrm{MC}_{\chi}(\mathcal{L})=(i_{m_{0}}^{\mathbf{x}_{0}})^{*}\mathrm{MC}^{\mathrm{Katz}}_{\chi}(\iota_{\mathbf{x}_{0}}^{*}\mathcal{L})\\
&\cong \mathrm{Im}(H^{1}_{c}(\mathbb{C}\backslash Q_{n+1}^{m_{0}},(\iota_{\mathbf{x}_{0}}^{*}\mathcal{L})_{y}\boxtimes \chi_{(z-y)})
\rightarrow H^{1}(\mathbb{C}\backslash Q_{n+1}^{m_{0}},(\iota_{\mathbf{x}_{0}}^{*}\mathcal{L})_{y}\boxtimes \chi_{(z-y)})).
\end{align*}
Here we regard 
$(\iota_{\mathbf{x}_{0}}^{*}\mathcal{L})_{y}\boxtimes \chi_{(z-y)}$
as the local system on
$\mathbb{C}\backslash Q_{n+1}^{m_{0}}$
through the pullback along the inclusion 
$\widetilde{i}_{m_{0}}^{\mathbf{x}_{0}}$.
\begin{rem}\label{rem:vanishing}
By Artin's vanishing theorem, we know that
\[
	H^{i}(\mathbb{C}\backslash Q_{n+1}^{m_{0}},(\iota_{\mathbf{x}_{0}}^{*}\mathcal{L})_{y}\boxtimes \chi_{(z-y)})=0\quad\text{ if } i\neq 0,1.
\]
Moreover if $\chi$ is non-trivial and $\mathcal{L}$ is nonzero, then
the local system $(\iota_{\mathbf{x}_{0}}^{*}\mathcal{L})_{y}\boxtimes \chi_{(z-y)}$
is non-trivial as well.
Therefore we also have the vanishing of $H^{0}$ as well.
For the compactly supported cohomology groups, 
it is also known that
\[
	H^{i}_{c}(\mathbb{C}\backslash Q_{n+1}^{m_{0}},(\iota_{\mathbf{x}_{0}}^{*}\mathcal{L})_{y}\boxtimes \chi_{(z-y)})=0 \quad\text{ for } i\neq 1,
\]
see Proposition B.3.4 in \cite{Ach} for example.
\end{rem}

\subsection{Composition low for middle convolution}
Let us recall the composition law for the Katz middle convolution.
Let $Q_{n}\subset \mathbb{C}$
be a set of distinct $n$-points and 
$j\colon \mathbb{C}\backslash Q_{n}\hookrightarrow \mathbb{C}$
be the inclusion map. 
\begin{df}[Property $\wp$]\label{df:prpp}
	For $\mathcal{L}\in \mathrm{Loc}(\mathbb{C}\backslash Q_{n},\mathbb{C})$,
	we say $\mathcal{L}$ has the property $\wp$ if 
	$j_{*}\mathcal{L}$ does not have 
	the constant sheaf $\mathbb{C}$ as a
	subquotient or a subobject.
	\footnote{Lemmas 2.6.13, 2.6.14, and 2.6.15 in \cite{Katz} assure that 
this property $\wp$ implies the original one defined in (2.6.2)
in \cite{Katz}.}
\end{df}
 
\begin{thm}[Katz]\label{thm:Katz}
	Let $\chi,\varphi\colon \mathbb{Z}\rightarrow \mathbb{C}^{\times}$
	be nontrivial multiplicative characters.
	Also let $\mathcal{L}\in \mathrm{Loc}(\mathbb{C}\backslash Q_{n},\mathbb{C})$
	be a local system with the property $\wp$.
	Then the following hold.
	\begin{enumerate}
		\item The middle convolution preserves the property $\wp$,
		i.e., $\mathrm{MC}_{\chi}^{\mathrm{Katz}}(\mathcal{L})$ has the property $\wp$.
		\item There exist isomorphisms as local systems
		\begin{align*}
			\mathrm{MC}^{\mathrm{Katz}}_{\varphi}\circ \mathrm{MC}^{\mathrm{Katz}}_{\chi}(\mathcal{L})
			&\cong \mathrm{MC}^{\mathrm{Katz}}_{\chi\cdot \varphi}(\mathcal{L})\quad \text{ if }\chi\cdot \varphi\text{ is nontrivial},\\
			\mathrm{MC}^{\mathrm{Katz}}_{\chi^{-1}}\circ \mathrm{MC}^{\mathrm{Katz}}_{\chi}(\mathcal{L})
			&\cong \mathcal{L}.
		\end{align*}
	\end{enumerate}
\end{thm}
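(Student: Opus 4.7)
The plan is to reinterpret Katz's middle convolution as a convolution operation on constructible sheaves on $\mathbb{A}^{1}$ built from the middle extension, after which both assertions follow from general functoriality together with explicit calculations for Kummer sheaves. First I would apply proper base change to $\mathrm{pr}_{2}\colon (\mathbb{C}\setminus Q_{n})^{2}\setminus \Delta\to \mathbb{C}\setminus Q_{n}$ together with the change of variables $u=z-y$ to identify
$$
	\mathrm{MC}_{\chi}^{\mathrm{Katz}}(\mathcal{L}) \cong \mathrm{Im}\bigl(j_{!}\mathcal{L} \ast j_{!}\mathcal{K}_{\chi} \longrightarrow j_{\ast}\mathcal{L} \ast j_{\ast}\mathcal{K}_{\chi}\bigr)
$$
restricted to the complement of the new singular locus, where $\ast$ is the naive convolution on $\mathbb{A}^{1}$ induced by the addition map, and $j\colon \mathbb{C}\setminus Q_{n}\hookrightarrow \mathbb{C}$. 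This reinterpretation gives access to the projection formula and associativity of convolution.

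For claim 1 I would compute the local monodromy of $\mathrm{MC}_{\chi}^{\mathrm{Katz}}(\mathcal{L})$ at each puncture $t_{i}$ and at $\infty$, using the stalk formula reviewed in Remark \ref{rem:vanishing} and its compactly supported variant. Property $\wp$ says precisely that $j_{\ast}\mathcal{L}$ has no trivial constituents at any $t_{i}$ nor at infinity; this translates into the vanishing of specific invariant and coinvariant subspaces. A direct matrix computation — essentially the local form of Katz's construction — then shows that the Jordan blocks of the new monodromy at each puncture are shifted by the eigenvalue $\chi(1)$ and the trivial eigenvalue does not reappear, so $\wp$ propagates to $\mathrm{MC}_{\chi}^{\mathrm{Katz}}(\mathcal{L})$.

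For claim 2 the key input is the computation
$$
	\mathcal{K}_{\chi}\ast_{\mathrm{mid}}\mathcal{K}_{\varphi}\cong \mathcal{K}_{\chi\varphi}\quad(\chi\varphi\ \text{nontrivial}),\qquad \mathcal{K}_{\chi}\ast_{\mathrm{mid}}\mathcal{K}_{\chi^{-1}}\cong \delta_{0},
$$
of the middle convolution of two Kummer sheaves. Both reduce to the standard calculation of $H^{\ast}(\mathbb{C}^{\times},\mathcal{K}_{\psi})$ and $H^{\ast}_{c}(\mathbb{C}^{\times},\mathcal{K}_{\psi})$, which vanish outside degree $1$ whenever $\psi$ is nontrivial. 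Combining this with associativity of the naive convolution $\ast$, and with the compatibility between convolution and the formation of the middle image under hypothesis $\wp$, one deduces both composition isomorphisms. Inversion follows similarly by recognizing the $\delta_{0}$ output as the identity of the convolution monoidal structure.

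The main obstacle is precisely this last compatibility: showing that under hypothesis $\wp$ the operation $\mathrm{Im}(R^{1}\mathrm{pr}_{2!}\to R^{1}\mathrm{pr}_{2\ast})$ is associative when iterated. Equivalently one must exclude extra contributions from $H^{0}_{c}$ and $H^{2}$ terms in the Leray spectral sequence for the threefold pushforward along $(y,u,v)\mapsto y+u+v$; property $\wp$ controls exactly these trivial contributions. Katz handles this via the algebraic Fourier transform on $\mathbb{A}^{1}$ and Laumon's stationary phase analysis, whereas a purely topological proof instead requires a careful spectral sequence argument combined with vanishing statements for Kummer coefficients on $\mathbb{C}^{\times}$, using $\wp$ to rule out parasitic trivial subquotients at every intermediate stage.
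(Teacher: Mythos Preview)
The paper does not prove this theorem; it is stated as a result of Katz and cited to \cite{Katz} (specifically the lemmas around 2.6.13--2.6.15 for the preservation of $\wp$, Proposition 2.6.5 for associativity of the middle convolution, and Proposition 2.9.6 for the Kummer computation). So there is no in-paper proof to compare your proposal against. The paper's own contribution is the analogous Theorem \ref{thm:composition} for $M(\mathcal{A})$, whose proof \emph{reduces} to Theorem \ref{thm:Katz} via the fibre comparison of Proposition \ref{prop:katzreduction}.

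As a sketch of Katz's argument your outline is broadly correct in structure---reformulate $\mathrm{MC}_{\chi}^{\mathrm{Katz}}$ as $*_{\mathrm{mid}}$ with $j_{*}\mathcal{K}_{\chi}[1]$, prove associativity of $*_{\mathrm{mid}}$ under $\wp$, and then use the explicit Kummer convolutions---but one attribution is off. Katz's proof of the associativity step (his Proposition 2.6.5, which the paper reproduces for the hyperplane setting in Proposition \ref{prop:compos}) does \emph{not} go through the Fourier transform or Laumon's stationary phase. It is a direct perverse-sheaf argument: one shows that the composite $\mathcal{L}*_{!}\mathcal{K}_{\chi}*_{!}\mathcal{K}_{\varphi}\twoheadrightarrow (\mathcal{L}*_{\mathrm{mid}}\mathcal{K}_{\chi})*_{\mathrm{mid}}\mathcal{K}_{\varphi}$ is surjective and the composite $(\mathcal{L}*_{\mathrm{mid}}\mathcal{K}_{\chi})*_{\mathrm{mid}}\mathcal{K}_{\varphi}\hookrightarrow \mathcal{L}*_{*}\mathcal{K}_{\chi}*_{*}\mathcal{K}_{\varphi}$ is injective, so the iterated middle convolution is the image of $!$ in $*$; rearranging parentheses gives the same image for the other bracketing. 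Property $\wp$ enters exactly to guarantee these surjectivity/injectivity statements. If you intend to write out a full proof, this is the mechanism you should implement rather than a spectral-sequence or Fourier argument.
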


The purpose of this section is to see that the above composition law also holds
for the middle convolution functor on $M(\mathcal{A})$. 
For this purpose,
we consider a reformulation of the middle convolution functor 
as a functor on the derived category $D_{\mathrm{loc}}^{b}(M(\mathcal{A}),\mathbb{C})$.
Consider the full subcategory of $D_{\mathrm{loc}}^{b}(M(\mathcal{A}),\mathbb{C})$
consisting of shifted local systems, 
\[
	\mathrm{Perv}(M(\mathcal{A}),\mathbb{C}):=\{\mathcal{L}[l]\mid \mathcal{L}\in \mathrm{Loc}(M(\mathcal{A}))\},
\]
where $l=\mathrm{dim\,}M(\mathcal{A})$.
Take a nontrivial multiplicative character 
$\chi\colon \mathbb{Z}\rightarrow \mathbb{C}^{\times}$.
Then
for an object $\mathcal{L}[l]\in \mathrm{Perv}(M(\mathcal{A}),\mathbb{C})$ represented by a local system $\mathcal{L}\in \mathrm{Loc}(M(\mathcal{A}),\mathbb{C})$,
we can define following objects in $D_{\mathrm{loc}}^{b}(M(\mathcal{A}),\mathbb{C})$,
\begin{align*}
	C_{\chi\,!}(\mathcal{L}[l])&:=R\mathrm{pr}_{2\,!}[-1]((\mathcal{L}[l])_{\mathbf{x},y}\boxtimes \mathcal{K}_{\chi}[1]_{(z-y)}),\\
	C_{\chi\,*}(\mathcal{L}[l])&:=R\mathrm{pr}_{2\,*}[-1]((\mathcal{L}[l])_{\mathbf{x},y}\boxtimes \mathcal{K}_{\chi}[1]_{(z-y)}).	
\end{align*}
Here $\boxtimes$ stands for the external tensor product 
with respect to  
$\mathrm{pr}_{1}\colon M(\mathcal{A})\times_{\pi_{Y}}M(\mathcal{A})\backslash\Delta\rightarrow M(\mathcal{A})$
and $(z-y)\colon M(\mathcal{A})\times_{\pi_{Y}}M(\mathcal{A})\backslash\Delta\rightarrow \mathbb{C}^{\times}$.
The vanishing results in Remark \ref{rem:vanishing}
yield the quasi-isomorphisms
\begin{align*}
	C_{\chi\,!}(\mathcal{L}[l])&\simeq R^{1}\mathrm{pr}_{2\,!}(\mathrm{pr}_{1}^{*}\mathcal{L}\otimes (z-y)^{*}\mathcal{K}_{\chi})[l],\\
	C_{\chi\,*}(\mathcal{L}[l])&\simeq R^{1}\mathrm{pr}_{2\,*}(\mathrm{pr}_{1}^{*}\mathcal{L}\otimes (z-y)^{*}\mathcal{K}_{\chi})[l],
\end{align*}
which assures that $C_{\chi\,!}$ and $C_{\chi\,*}$ define endofunctors on $\mathrm{Prev}(M(\mathcal{A}),\mathbb{C})$.
Therefore by taking the image in the abelian category $\mathrm{Perv}(M(\mathcal{A}),\mathbb{C})$,
we can reformulate the middle convolution functor as follows,
\[
	\mathrm{MC}_{\chi}\colon \mathrm{Perv}(M(\mathcal{A}),\mathbb{C})
	\rightarrow \mathrm{Perv}(M(\mathcal{A}),\mathbb{C});\quad
	\mathcal{L}[l]\mapsto \mathrm{Im}(C_{\chi\,!}(\mathcal{L}[l])
	\rightarrow C_{\chi\,*}(\mathcal{L}[l])).
\]

We moreover need a slight modification. 
For the closed subspace
\[
	\Delta:=\{(\mathbf{x},y,y)\in M(\mathcal{A})\times_{\pi_{Y}}M(\mathcal{A})\}	
\]
of $M(\mathcal{A})\times_{\pi_{Y}}M(\mathcal{A})$,
we consider the commutative diagrams for $i=1,2$,
\[
\begin{tikzcd}
M(\mathcal{A})\times_{\pi_{Y}} M(\mathcal{A})\backslash \Delta\arrow[r,"\delta", hookrightarrow]\arrow[d,"\mathrm{pr}_{i}"]&
M(\mathcal{A})\times_{\pi_{Y}} M(\mathcal{A})\arrow[d,"\overline{\mathrm{pr}}_{i}"]\arrow[r,"\eta",hookleftarrow]&\Delta\arrow[d,"\widetilde{\mathrm{pr}}_{i}"]\\
M(\mathcal{A})\arrow[r, equal]&M(\mathcal{A})&M(\mathcal{A})\arrow[l, equal]
\end{tikzcd},
\]
where the top horizontal arrows are the natural inclusions.
Also consider the inclusion $j\colon \mathbb{C}^{\times}\hookrightarrow \mathbb{C}$ and 
the projection 
\[
\overline{(z-y)}\colon 
M(\mathcal{A})\times_{\pi_{Y}} M(\mathcal{A})
\ni (\mathbf{x},y,z)\mapsto z-y\in \mathbb{C}.
\]
Then we define 
\begin{align*}
	\overline{C}_{\chi\,*}(\mathcal{L}[l]):=\mathcal{L}[l]*_{*}j_{*}\mathcal{K}_{\chi}[1]&:=R\overline{\mathrm{pr}}_{2\,*}[-1]((\mathcal{L}[l])_{\mathbf{x},y}\boxtimes (j_{*}\mathcal{K}_{\chi}[1])_{z-y}),\\
	\overline{C}_{\chi\,!}(\mathcal{L}[l]):=\mathcal{L}[l]*_{!}j_{*}\mathcal{K}_{\chi}[1]&:=R\overline{\mathrm{pr}}_{2\,!}[-1]((\mathcal{L}[l])_{\mathbf{x},y}\boxtimes (j_{*}\mathcal{K}_{\chi}[1])_{z-y}),
\end{align*}
for $\mathcal{L}\in \mathrm{Loc}(M(\mathcal{A}),\mathbb{C})$.
Here $\boxtimes$ stands for the external tensor product 
with respect to the projections
$\overline{\mathrm{pr}}_{1}\colon M(\mathcal{A})\times_{\pi_{Y}}M(\mathcal{A})\rightarrow M(\mathcal{A})$
and	
$\overline{(z-y)}\colon M(\mathcal{A})\times_{\pi_{Y}} M(\mathcal{A})\rightarrow \mathbb{C}$.

Now we can relate these functors with the previous ones as follows.
The adjunction natural transforms,
$\delta_{!}\delta^{*}\rightarrow \mathrm{id}$, $\mathrm{id}\rightarrow \delta_{*}\delta^{*}$,
yield the natural morphisms
\begin{multline*}
	C_{\chi\,!}(\mathcal{L}[l])\cong R\overline{\mathrm{pr}}_{2\,!}[-1]\delta_{!}\delta^{*}(\overline{\mathrm{pr}}_{1}^{*}\mathcal{L}[l]\otimes \overline{(z-y)}^{*}j_{*}\mathcal{K}_{\chi}[1])
	\\\rightarrow 
	R\overline{\mathrm{pr}}_{2\,!}[1](\overline{\mathrm{pr}}_{1}^{*}\mathcal{L}[l]\otimes \overline{(z-y)}^{*}j_{*}\mathcal{K}_{\chi}[1])\cong
	\overline{C}_{\chi\,!}(\mathcal{L}[l]),
\end{multline*}
\begin{multline*}
\overline{C}_{\chi\,*}(\mathcal{L}[l])=R\overline{\mathrm{pr}}_{2\,*}[1](\overline{\mathrm{pr}}_{1}^{*}[-1]\mathcal{L}[l]\otimes \overline{(z-y)}^{*}j_{*}\mathcal{K}_{\chi})\\
\rightarrow	
R\overline{\mathrm{pr}}_{2\,*}[1]\delta_{*}\delta^{*}(\overline{\mathrm{pr}}_{1}^{*}[-1]\mathcal{L}[l]\otimes \overline{(z-y)}^{*}j_{*}\mathcal{K}_{\chi})
=C_{\chi\,*}(\mathcal{L}[l]).
\end{multline*}
\begin{prop}
For a nontrivial multiplicative character $\chi$, the above morphisms 
$C_{\chi\,!}(\mathcal{L}[l])\rightarrow \overline{C}_{\chi\,!}(\mathcal{L}[l])$ and $\overline{C}_{\chi\,*}(\mathcal{L}[l])\rightarrow C_{\chi\,*}(\mathcal{L}[l])$
are isomorphisms.
\end{prop}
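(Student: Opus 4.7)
The plan is to realize each of the two comparison morphisms as the map arising from one of the standard open-closed distinguished triangles attached to the stratification $M(\mathcal{A})\times_{\pi_{Y}}M(\mathcal{A})=(M(\mathcal{A})\times_{\pi_{Y}}M(\mathcal{A})\setminus \Delta)\sqcup \Delta$, and then reduce the statement to two vanishings on $\Delta$. Concretely, set
\[
\mathcal{F}:=(\mathcal{L}[l])_{\mathbf{x},y}\boxtimes (j_{*}\mathcal{K}_{\chi}[1])_{z-y},
\]
and consider the triangles
\[
\delta_{!}\delta^{*}\mathcal{F}\to\mathcal{F}\to \eta_{*}\eta^{*}\mathcal{F}\xrightarrow{+1}, \qquad \eta_{*}\eta^{!}\mathcal{F}\to \mathcal{F}\to R\delta_{*}\delta^{*}\mathcal{F}\xrightarrow{+1}.
\]
Applying $R\overline{\mathrm{pr}}_{2\,!}[-1]$ to the first, and $R\overline{\mathrm{pr}}_{2\,*}[-1]$ to the second, and using $\mathrm{pr}_{2}=\overline{\mathrm{pr}}_{2}\circ\delta$, the natural morphisms $C_{\chi\,!}(\mathcal{L}[l])\to \overline{C}_{\chi\,!}(\mathcal{L}[l])$ and $\overline{C}_{\chi\,*}(\mathcal{L}[l])\to C_{\chi\,*}(\mathcal{L}[l])$ are recovered, with cones controlled by $R\widetilde{\mathrm{pr}}_{2\,!}\eta^{*}\mathcal{F}$ and $R\widetilde{\mathrm{pr}}_{2\,*}\eta^{!}\mathcal{F}$ respectively. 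Hence it suffices to prove $\eta^{*}\mathcal{F}=0$ and $\eta^{!}\mathcal{F}=0$.

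Both vanishings factor through the Cartesian square in which $g:=\overline{(z-y)}\colon M(\mathcal{A})\times_{\pi_{Y}}M(\mathcal{A})\to \mathbb{C}$ sits over $i\colon \{0\}\hookrightarrow \mathbb{C}$ along the constant map $q\colon \Delta\to\{0\}$, so that $g\circ\eta=i\circ q$. They therefore reduce to two local facts on $\mathbb{C}$, both exploiting nontriviality of $\chi$: (i) $i^{*}j_{*}\mathcal{K}_{\chi}=(j_{*}\mathcal{K}_{\chi})_{0}=\mathcal{K}_{\chi}^{\pi_{1}(\mathbb{C}^{\times})}=0$; and (ii) $i^{!}j_{*}\mathcal{K}_{\chi}=0$. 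For (ii) I would first observe that $R^{k}j_{*}\mathcal{K}_{\chi}=0$ for $k\geq 1$, since a small punctured disk has $H^{k}=0$ against any nontrivial rank-one local system. Hence the canonical map $j_{*}\mathcal{K}_{\chi}\to Rj_{*}\mathcal{K}_{\chi}$ is an isomorphism, so the cone $i_{*}i^{!}j_{*}\mathcal{K}_{\chi}$ in the open-closed triangle $i_{*}i^{!}\to \mathrm{id}\to Rj_{*}j^{*}$ vanishes.

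Fact (i) immediately gives $\eta^{*}g^{*}j_{*}\mathcal{K}_{\chi}[1]=q^{*}i^{*}j_{*}\mathcal{K}_{\chi}[1]=0$, and monoidality of $\eta^{*}$ yields $\eta^{*}\mathcal{F}=0$. For $\eta^{!}\mathcal{F}=0$ I would use that $g$ is a submersion of complex relative dimension $l$ at every point of $M(\mathcal{A})\times_{\pi_{Y}}M(\mathcal{A})$ (its $z$-derivative equals $1$), so Verdier duality on the underlying complex manifold gives $g^{!}\simeq g^{*}[2l]$; combined with (ii),
\[
\eta^{!}g^{*}j_{*}\mathcal{K}_{\chi}[1]\simeq (g\circ\eta)^{!}j_{*}\mathcal{K}_{\chi}[1-2l]=q^{!}i^{!}j_{*}\mathcal{K}_{\chi}[1-2l]=0.
\]
Since $\overline{\mathrm{pr}}_{1}^{*}\mathcal{L}[l]$ is a shifted local system, the projection-formula identity $\eta^{!}(\mathcal{G}\otimes \overline{\mathrm{pr}}_{1}^{*}\mathcal{L}[l])\simeq \eta^{!}\mathcal{G}\otimes \eta^{*}\overline{\mathrm{pr}}_{1}^{*}\mathcal{L}[l]$ then forces $\eta^{!}\mathcal{F}=0$.

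The main obstacle I anticipate is the careful bookkeeping surrounding the $!$-pullback, namely verifying $g^{!}\simeq g^{*}[2l]$ on all of $M(\mathcal{A})\times_{\pi_{Y}}M(\mathcal{A})$ and justifying the projection formula for $\eta^{!}$ with a local-system tensor factor. Both are standard once one uses that $M(\mathcal{A})\times_{\pi_{Y}}M(\mathcal{A})$ is a complex manifold, $g$ is everywhere a submersion, and local systems are locally free, so the true content of the proposition is the vanishing of the stalk and costalk of $j_{*}\mathcal{K}_{\chi}$ at the origin for a nontrivial character.
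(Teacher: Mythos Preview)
Your argument is correct and follows the same overall architecture as the paper: both use the open--closed distinguished triangles for the stratification by $\Delta$, apply $R\overline{\mathrm{pr}}_{2\,!}[-1]$ and $R\overline{\mathrm{pr}}_{2\,*}[-1]$, and reduce to the vanishing of $\eta^{*}\mathcal{F}$ and $\eta^{!}\mathcal{F}$. The $\eta^{*}$-vanishing is handled identically via $i_{0}^{*}j_{*}\mathcal{K}_{\chi}=0$.

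Where you diverge is in the treatment of $\eta^{!}\mathcal{F}=0$. The paper dispatches this in one line: since $\eta$ is the inclusion of the smooth divisor $\Delta$ (a regular closed embedding of complex codimension~$1$), one has $\eta^{!}\simeq \eta^{*}[-2]$ on constructible complexes, so $\eta^{*}\mathcal{F}=0$ immediately gives $\eta^{!}\mathcal{F}=0$. You instead prove the separate vanishing $i^{!}j_{*}\mathcal{K}_{\chi}=0$ on $\mathbb{C}$ (via $j_{*}\mathcal{K}_{\chi}\xrightarrow{\sim}Rj_{*}\mathcal{K}_{\chi}$), then transport it through the Cartesian square using $g^{!}\simeq g^{*}[2l]$ for the submersion $g=\overline{(z-y)}$ together with a projection formula for $\eta^{!}$ against a locally free factor. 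This is valid but more labor-intensive: you are re-deriving on $\mathbb{C}$, and then pulling back through two compatibility isomorphisms, what the paper gets for free from the single observation that $\Delta$ is smooth in $M(\mathcal{A})\times_{\pi_{Y}}M(\mathcal{A})$. The advantage of your route is that it makes the role of $\chi$ being nontrivial completely transparent at the level of $\{0\}\hookrightarrow\mathbb{C}$ for both $*$- and $!$-restrictions; the advantage of the paper's route is economy, avoiding the auxiliary facts about $g$ and the projection formula altogether.
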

\begin{proof}
	Consider the cartesian square
	\[
		\begin{tikzcd}
		M(\mathcal{A})\times_{\pi_{Y}} M(\mathcal{A})\arrow[r,"\eta", hookleftarrow]\arrow[d,"\overline{(z-y)}"]&\Delta\arrow[d]\\
		\mathbb{C}\arrow[r,"i_{0}", hookleftarrow]& \{0\}
		\end{tikzcd}.
	\]
	By recalling that $i_{0}^{*}(j_{*}\mathcal{K}_{\chi})=0$ since $\chi$ is nontrivial,
	we obtain 
	\[
		\eta^{*}(\overline{\mathrm{pr}}_{1}^{*}\mathcal{L}[l]\otimes \overline{(z-y)}^{*}j_{*}\mathcal{K}_{\chi})=0.
	\]
	
	Therefore the sequences of adjunction morphisms
	$\delta_{!}\delta^{*}\rightarrow \mathrm{id}\rightarrow R\eta_{*}\eta^{*}$ and $\eta_{*}\eta^{!}\rightarrow \mathrm{id}\rightarrow R\delta_{*}\delta^{*}$
	yield the distinguished triangles
	\begin{equation*}
		C_{\chi\,!}(\mathcal{L}[l])\rightarrow \overline{C}_{\chi\,!}(\mathcal{L}[l])\rightarrow 0\rightarrow,\quad 
		0\rightarrow \overline{C}_{\chi\,*}(\mathcal{L}[l])\rightarrow C_{\chi\,*}(\mathcal{L}[l])\rightarrow,
	\end{equation*}
	where we used the equation $\eta^{*}=\eta^{!}$ as functors on constructible sheaves
	which follows from the smoothness of $\eta$.
	Thus we obtain the claim.
\end{proof}	
Therefore 
\[	
	\mathcal{L}[l]*_{\mathrm{mid}}j_{*}\mathcal{K}_{\chi}[1]:=
	\mathrm{Im}(\mathcal{L}[l]*_{!}j_{*}\mathcal{K}_{\chi}[1]
	\rightarrow \mathcal{L}[l]*_{*}j_{*}\mathcal{K}_{\chi}[1]).
\]
is isomorphic to the middle convolution $\mathrm{MC}_{\chi}(\mathcal{L}[l])$
defined above.

We now define the property $\wp$ for $\mathrm{Loc}(M(\mathcal{A}),\mathbb{C})$.
\begin{df}[Property $\wp$]
We say $\mathcal{L}\in \mathrm{Loc}(M(\mathcal{A}),\mathbb{C})$
has the property $\wp$ if for each $\mathbf{x}\in M(p\mathcal{A}_{Y})$,
the pullback
$\iota_{\mathbf{x}}^{*}\mathcal{L}$
along the inclusion $\iota_{\mathbf{x}}\colon \pi_{Y}^{-1}(\mathbf{x})
\hookrightarrow M(\mathcal{A})$  
has the property $\wp$ defined in Definition \ref{df:prpp}.
\end{df}
\begin{rem}
	Since all fibers of the fibration
	$\pi_{Y}\colon M(\mathcal{A})\rightarrow M(p\mathcal{A}_{Y})$
	are isomorphic,
	it suffice to check the property $\wp$ at a single fiber.
	That is, a given local system $\mathcal{L}\in \mathrm{Loc}(M(\mathcal{A}),\mathbb{C})$
	has the property $\wp$ if and only if 
	for a fixed point $\mathbf{x}_{0}\in M(p\mathcal{A}_{Y})$,
	the pullback $\iota_{\mathbf{x}_{0}}^{*}\mathcal{L}$
	has the property $\wp$.
\end{rem}

We also use the following notation,
\begin{align*}
j_{*}\mathcal{K}_{\chi}[1]*_{!}j_{*}\mathcal{K}_{\varphi}[1]&:=R\overline{\mathrm{pr}}_{2\,!}[-1]((j_{*}\mathcal{K}_{\chi}[1])_{y}\boxtimes (j_{*}\mathcal{K}_{\varphi}[1])_{z-y}),\\
j_{*}\mathcal{K}_{\chi}[1]*_{*}j_{!}\mathcal{K}_{\varphi}[1]&:=R\overline{\mathrm{pr}}_{2\,*}[-1]((j_{*}\mathcal{K}_{\chi}[1])_{y}\boxtimes (j_{*}\mathcal{K}_{\varphi}[1])_{z-y}),\\
j_{*}\mathcal{K}_{\chi}[1]*_{\mathrm{mid}}j_{!}\mathcal{K}_{\varphi}[1]&:=\mathrm{Im}(j_{*}\mathcal{K}_{\chi}[1]*_{!}j_{*}\mathcal{K}_{\varphi}[1]
\rightarrow j_{*}\mathcal{K}_{\chi}[1]*_{*}j_{*}\mathcal{K}_{\varphi}[1]),	
\end{align*}
for nontrivial characters $\chi,\varphi\colon \mathbb{Z}\rightarrow \mathbb{C}^{\times}$
and the projections 
$\overline{\mathrm{pr}}_{i}\colon \mathbb{C}^{2}\rightarrow \mathbb{C}$, $i=1,2$,
and $\overline{(z-y)}\colon \mathbb{C}^{2}\ni (y,z)\mapsto z-y\in \mathbb{C}$. 
Then the following is the analogue of Proposition 2.6.5 in \cite{Katz}
which is a key ingredient for the proof of the composition law.
\begin{prop}\label{prop:compos}
Let us take $\mathcal{L}\in \mathrm{Loc}(M(\mathcal{A}),\mathbb{C})$
with the property $\wp$,
and nontrivial characters $\chi,\varphi\colon \mathbb{Z}\rightarrow \mathbb{C}^{\times}$.
Then there exist an isomorphism
\[
	(\mathcal{L}[l]*_{\mathrm{mid}}j_{*}\mathcal{K}_{\chi}[1])*_{\mathrm{mid}}j_{*}\mathcal{K}_{\varphi}[1]
	\cong \mathcal{L}[l]*_{\mathrm{mid}}(j_{*}\mathcal{K}_{\chi}[1]*_{\mathrm{mid}}j_{*}\mathcal{K}_{\varphi}[1])
\]	
\end{prop}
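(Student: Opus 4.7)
The plan is to reduce to associativities for the $*_!$ and $*_*$ constructions---which are formal consequences of base change and the projection formula---and then pass to the middle convolution via property $\wp$ and a fiberwise reduction to Katz's associativity (Theorem \ref{thm:Katz}).

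First, I would establish
\[
(\mathcal{L}[l]*_!j_*\mathcal{K}_\chi[1])*_!j_*\mathcal{K}_\varphi[1] \cong \mathcal{L}[l]*_!(j_*\mathcal{K}_\chi[1]*_{!}j_*\mathcal{K}_\varphi[1])
\]
and its $*_*$ analogue. On the relative $3$-fold product $W:=M(\mathcal{A})\times_{\pi_{Y}}M(\mathcal{A})\times_{\pi_{Y}}M(\mathcal{A})$, with coordinates $(\mathbf{x},y,z_{1},z_{2})$, form the external tensor product of $\mathcal{L}$, of $j_*\mathcal{K}_\chi$ twisted along $z_{1}-y$, and of $j_*\mathcal{K}_\varphi$ twisted along $z_{2}-z_{1}$. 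Base change along the relevant Cartesian squares together with the projection formula, applied in two orders, realizes both sides of the displayed isomorphism as the proper pushforward of this single object to $M(\mathcal{A})$ along $(\mathbf{x},y,z_{1},z_{2})\mapsto(\mathbf{x},z_{2})$. The identical argument with $R\overline{\mathrm{pr}}_{*}$ in place of $R\overline{\mathrm{pr}}_{!}$ gives the $*_*$ version, and the natural transformation $*_!\to *_*$ induced by $\delta_{!}\delta^{*}\to\mathrm{id}\to\delta_{*}\delta^{*}$ (as used in the preceding proposition) is compatible with both Fubini-type isomorphisms, yielding a commutative comparison diagram.

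The middle convolution is then recovered by taking the image in $\mathrm{Perv}(M(\mathcal{A}),\mathbb{C})$. I would first verify that property $\wp$ is preserved under $*_{\mathrm{mid}}$: by Proposition \ref{prop:katzreduction} this reduces fiberwise to Theorem \ref{thm:Katz}(1), so $\mathcal{L}[l]*_{\mathrm{mid}}j_*\mathcal{K}_\chi[1]$ has property $\wp$ and both iterated middle convolutions are well-defined. The associativity of $*_!$ and $*_*$, together with the universal property of the image, produces a canonical morphism between the two sides of the desired identity. To prove that this morphism is an isomorphism, I would restrict to a single fiber $\pi_{Y}^{-1}(\mathbf{x}_{0})$: by Proposition \ref{prop:katzreduction} and the stalk formula derived above, the restriction is exactly the associativity isomorphism for the Katz middle convolution on $\mathbb{C}\setminus Q_{n}^{\mathbf{x}_{0}}$, which is precisely Proposition 2.6.5 in \cite{Katz}. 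Since the two sides are local systems on the connected space $M(\mathcal{A})$, a morphism of local systems which is an isomorphism on a single stalk is an isomorphism everywhere.

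The main obstacle I anticipate is showing that the canonical comparison morphism built from the $*_!$ and $*_*$ associativities really does restrict on the fiber to Katz's classical isomorphism, as opposed to some non-canonical variant. This amounts to a careful bookkeeping of natural transformations---the Fubini isomorphisms on $W$, the adjunctions defining the image, and the base-change isomorphisms of Proposition \ref{prop:katzreduction}---and verifying that each of these commutes with $\iota_{\mathbf{x}_{0}}^{*}$. Property $\wp$ is what makes this work: mirroring Katz's Lemmas 2.6.13--2.6.15 in \cite{Katz}, it forces the kernel and cokernel of $*_!\to *_*$ to be of a form which is annihilated by a further middle convolution with a nontrivial rank-one local system, so that ``image of the associativity'' coincides with ``associativity of the images'' in our setting.
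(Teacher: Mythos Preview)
Your proposal is correct and uses the same ingredients as the paper---associativity of $*_!$ and $*_*$, and fiberwise reduction to Katz's Proposition 2.6.5 via Proposition \ref{prop:katzreduction}. The execution differs slightly: rather than constructing a comparison morphism between the two sides and then checking it is an isomorphism on stalks, the paper shows directly that each iterated middle convolution coincides with the image of the single map
\[
\mathcal{L}[l]*_!j_*\mathcal{K}_\chi[1]*_!j_*\mathcal{K}_\varphi[1]\longrightarrow\mathcal{L}[l]*_*j_*\mathcal{K}_\chi[1]*_*j_*\mathcal{K}_\varphi[1].
\]
Concretely, one exhibits a surjection from the triple $*_!$ onto $(\mathcal{L}[l]*_{\mathrm{mid}}j_*\mathcal{K}_\chi[1])*_{\mathrm{mid}}j_*\mathcal{K}_\varphi[1]$ and an injection from it into the triple $*_*$, both checked on stalks via Katz; rearranging parentheses gives the same for $\mathcal{L}[l]*_{\mathrm{mid}}(j_*\mathcal{K}_\chi[1]*_{\mathrm{mid}}j_*\mathcal{K}_\varphi[1])$. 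This sandwiching argument sidesteps the bookkeeping you flag as your ``main obstacle'': there is no need to track a comparison morphism through the various base-change and adjunction isomorphisms, since both sides are identified with the same image inside the same ambient object. Your route would also work, but the paper's is shorter and avoids the naturality verification entirely.
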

\begin{proof}
We follow the argument in Proposition 2.6.5 in \cite{Katz}.
First note that from the compatibility of the external tensor product
and the direct image functors for constructible sheaves, we have the isomorphisms 
\begin{align*}
(\mathcal{L}[l]*_{!}j_{*}\mathcal{K}_{\chi}[1])*_{!}j_{*}\mathcal{K}_{\varphi}[1]
	&\cong \mathcal{L}[l]*_{!}(j_{*}\mathcal{K}_{\chi}[1]*_{!}j_{*}\mathcal{K}_{\varphi}[1]),\\
(\mathcal{L}[l]*_{*}j_{*}\mathcal{K}_{\chi}[1])*_{*}j_{*}\mathcal{K}_{\varphi}[1]
	&\cong \mathcal{L}[l]*_{*}(j_{*}\mathcal{K}_{\chi}[1]*_{*}j_{*}\mathcal{K}_{\varphi}[1]).
\end{align*}

Let us consider the sequence of natural morphisms
\begin{multline*}
\mathcal{L}[l]*_{!}j_{*}\mathcal{K}_{\chi}[1]*_{!}j_{*}\mathcal{K}_{\varphi}[1]
\rightarrow 
(\mathcal{L}[l]*_{\mathrm{mid}}j_{*}\mathcal{K}_{\chi}[1])*_{!}j_{*}\mathcal{K}_{\varphi}[1]\\
\rightarrow 
(\mathcal{L}[l]*_{\mathrm{mid}}j_{*}\mathcal{K}_{\chi}[1])*_{\mathrm{mid}}j_{*}\mathcal{K}_{\varphi}[1].
\end{multline*}
Then the composition of these morphisms is surjective.
Indeed, let us take an element $m_{0}=(\mathbf{x}_{0},y_{0})\in M(\mathcal{A})$
and consider the stalks at $m_{0}$.
Then by Proposition \ref{prop:katzreduction}, 
the above sequence of morphisms induces the following sequence,
\begin{multline*}
((\iota_{\mathbf{x}_{0}}^{*}\mathcal{L})[1]*_{!}j_{*}\mathcal{K}_{\chi}[1]*_{!}j_{*}\mathcal{K}_{\varphi}[1])_{y_{0}}
\rightarrow 
((\iota_{\mathbf{x}_{0}}^{*}\mathcal{L})[1]*_{\mathrm{mid}}j_{*}\mathcal{K}_{\chi}[1])*_{!}j_{*}\mathcal{K}_{\varphi}[1])_{y_{0}}\\
\rightarrow 
((\iota_{\mathbf{x}_{0}}^{*}\mathcal{L})[1]*_{\mathrm{mid}}j_{*}\mathcal{K}_{\chi}[1])*_{\mathrm{mid}}j_{*}\mathcal{K}_{\varphi}[1])_{y_{0}},
\end{multline*}
in which all the morphisms are surjective by Proposition 2.6.5 in \cite{Katz}.

Next consider another sequence of morphisms
\begin{multline*}
(\mathcal{L}[l]*_{\mathrm{mid}}j_{*}\mathcal{K}_{\chi}[1])*_{\mathrm{mid}}j_{*}\mathcal{K}_{\varphi}[1]
\rightarrow
(\mathcal{L}[l]*_{\mathrm{mid}}j_{*}\mathcal{K}_{\chi}[1])*_{*}j_{*}\mathcal{K}_{\varphi}[1]\\
\rightarrow
\mathcal{L}[l]*_{*}j_{*}\mathcal{K}_{\chi}[1]*_{*}j_{*}\mathcal{K}_{\varphi}[1],
\end{multline*}
which is injective by taking the stalks and again applying Proposition 2.6.5 in \cite{Katz}
as above.

Therefore $(\mathcal{L}[l]*_{\mathrm{mid}}j_{*}\mathcal{K}_{\chi}[1])*_{\mathrm{mid}}j_{*}\mathcal{K}_{\varphi}[1]$
is the image of
$\mathcal{L}[l]*_{!}j_{*}\mathcal{K}_{\chi}[1]*_{!}j_{*}\mathcal{K}_{\varphi}[1]
$ 
in $\mathcal{L}[l]*_{*}j_{*}\mathcal{K}_{\chi}[1]*_{*}j_{*}\mathcal{K}_{\varphi}[1].$
Rearranging the parentheses shows that $\mathcal{L}[l]*_{\mathrm{mid}}(j_{*}\mathcal{K}_{\chi}[1]*_{\mathrm{mid}}j_{*}\mathcal{K}_{\varphi}[1])$ is also this
image.
\end{proof}

\begin{thm}\label{thm:composition}
Let $\chi,\varphi\colon \mathbb{Z}\rightarrow \mathbb{C}^{\times}$
	be nontrivial multiplicative characters.
	Also let $\mathcal{L}\in \mathrm{Loc}(M(\mathcal{A}),\mathbb{C})$
	be a local system with the property $\wp$.
	Then the following hold.
	\begin{enumerate}
		\item The middle convolution preserves the property $\wp$,
		i.e., $\mathrm{MC}_{\chi}(\mathcal{L}[l])$ has the property $\wp$.
		\item There exist natural isomorphisms as local systems,
		\begin{align*}
			\mathrm{MC}_{\varphi}\circ \mathrm{MC}_{\chi}(\mathcal{L})
			&\cong \mathrm{MC}_{\chi\cdot \varphi}(\mathcal{L})\quad \text{ if }\chi\cdot \varphi\text{ is nontrivial},\\
			\mathrm{MC}_{\chi^{-1}}\circ \mathrm{MC}_{\chi}(\mathcal{L})
			&\cong \mathcal{L}.
		\end{align*}
	\end{enumerate}
\end{thm}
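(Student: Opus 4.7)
The plan is to reduce both parts to the Katz case, leveraging Proposition \ref{prop:katzreduction} (which identifies the fiber restriction of $\mathrm{MC}_{\chi}$ with $\mathrm{MC}_{\chi}^{\mathrm{Katz}}$) and Proposition \ref{prop:compos} (the associativity of iterated middle convolution expressed via the $*_{\mathrm{mid}}$ operator).

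For part (1), property $\wp$ on $M(\mathcal{A})$ is defined fiberwise and, as recorded in the remark after Definition \ref{df:prpp}, can be verified on a single fiber. Pulling back along the inclusion $\iota_{\mathbf{x}_{0}}\colon \pi_{Y}^{-1}(\mathbf{x}_{0})\hookrightarrow M(\mathcal{A})$ and applying Proposition \ref{prop:katzreduction} yields
\[
\iota_{\mathbf{x}_{0}}^{*}\,\mathrm{MC}_{\chi}(\mathcal{L}) \cong \mathrm{MC}_{\chi}^{\mathrm{Katz}}\bigl(\iota_{\mathbf{x}_{0}}^{*}\mathcal{L}\bigr),
\]
and since $\iota_{\mathbf{x}_{0}}^{*}\mathcal{L}$ has property $\wp$ in the Katz sense, Theorem \ref{thm:Katz}(1) immediately gives the conclusion.

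For part (2), I apply Proposition \ref{prop:compos} to rewrite
\[
\mathrm{MC}_{\varphi}\circ \mathrm{MC}_{\chi}(\mathcal{L})[l] \cong \mathcal{L}[l] *_{\mathrm{mid}} \bigl(j_{*}\mathcal{K}_{\chi}[1] *_{\mathrm{mid}} j_{*}\mathcal{K}_{\varphi}[1]\bigr),
\]
reducing the problem to identifying the inner kernel convolution, which lives entirely on $\mathbb{C}$ and is independent of $\mathcal{A}$. By the classical computation of multiplicative Kummer convolution (the same computation that underlies Theorem \ref{thm:Katz}(2)), one has
\[
j_{*}\mathcal{K}_{\chi}[1] *_{\mathrm{mid}} j_{*}\mathcal{K}_{\varphi}[1] \cong
\begin{cases}
j_{*}\mathcal{K}_{\chi\varphi}[1] & \text{if } \chi\cdot \varphi \text{ is nontrivial},\\
i_{0*}\mathbb{C} & \text{if } \chi\cdot \varphi \text{ is trivial},
\end{cases}
\]
where $i_{0}\colon \{0\}\hookrightarrow \mathbb{C}$ is the origin. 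Substituting the first case reproduces $\mathrm{MC}_{\chi\varphi}(\mathcal{L})[l]$ directly from Definition \ref{df:MC}. For the second case, the pullback $\overline{(z-y)}^{*}(i_{0*}\mathbb{C})$ is supported on the diagonal $\Delta$, so the convolution collapses under base change along $\eta\colon \Delta\hookrightarrow M(\mathcal{A})\times_{\pi_{Y}}M(\mathcal{A})$ (noting that $\overline{\mathrm{pr}}_{2}\circ \eta=\overline{\mathrm{pr}}_{1}\circ\eta=\mathrm{id}_{M(\mathcal{A})}$) and returns $\mathcal{L}[l]$. Removing the degree shift then yields both isomorphisms of the theorem.

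The principal technical point is the verification of the kernel-convolution identity, and specifically the degenerate case $\chi\varphi=1$. While the nontrivial case is a direct application of Katz's composition law to rank-one Kummer sheaves on $\mathbb{C}\setminus\{0\}$, the unitality statement requires a careful analysis of how the image of the natural morphism $*_{!}\to *_{*}$ on $\mathbb{C}^{2}$ extracts the skyscraper at the origin, together with a check that this skyscraper interacts correctly with convolution over $M(\mathcal{A})$ through the difference map $\overline{(z-y)}$. Once this computation is pinned down, both statements in (2) follow uniformly from Proposition \ref{prop:compos}.
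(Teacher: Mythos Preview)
Your proposal is correct and follows essentially the same route as the paper's proof: part (1) is reduced to the Katz case via Proposition \ref{prop:katzreduction} and checked on a single fiber, while part (2) uses Proposition \ref{prop:compos} together with the Kummer kernel identities $j_{*}\mathcal{K}_{\chi}[1]*_{\mathrm{mid}}j_{*}\mathcal{K}_{\varphi}[1]\cong j_{*}\mathcal{K}_{\chi\varphi}[1]$ (nontrivial product) and $\cong \delta_{0}$ (trivial product), the latter being Katz's Proposition 2.9.6. The only cosmetic difference is that the paper dispatches the degenerate case by citing that $\delta_{0}$ is the convolution unit (Katz \S2.5.3), whereas you spell out the base-change argument along $\eta\colon \Delta\hookrightarrow M(\mathcal{A})\times_{\pi_{Y}}M(\mathcal{A})$ explicitly; the content is the same.
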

\begin{proof}
	The first claim 
	follows from Proposition \ref{prop:katzreduction}
	and Theorem \ref{thm:Katz} by looking at stalks.
	For the second claim, let us recall the isomorphisms
	obtained by Proposition 2.9.6 in \cite{Katz},
	\begin{align*}
		&j_{*}\mathcal{K}_{\chi}*_{\mathrm{mid}}j_{*}\mathcal{K}_{\varphi}
		\cong j_{*}\mathcal{K}_{\chi\cdot \varphi} \quad\text{ if } \chi\cdot \varphi\text{ is nontrivial},\\
		&j_{*}\mathcal{K}_{\chi}*_{\mathrm{mid}}j_{*}\mathcal{K}_{\chi^{-1}}
		\cong \delta_{0},
	\end{align*}
	where $\delta_{0}$ is the delta sheaf supported at $\{0\}\in \mathbb{C}$,
	the direct image $\delta_{0}=i_{0\,*}\mathbb{C}$
	along the inclusion $i_{0}\colon \{0\}\hookrightarrow \mathbb{C}$.
	Then since the delta sheaf $\delta_{0}$ is the unit object with respect to the convolution product
	(see Section 2.5.3 of \cite{Katz}),
	we obtain the desired isomorphisms by Proposition \ref{prop:compos}.
\end{proof}
\subsection{Middle convolution as coimage}\label{sec:MCCoim}
The middle convolution functor is defined as the image of the natural morphism
from $R\mathrm{pr}_{2\,!}$ to $R\mathrm{pr}_{2\,*}$.
In this section, we give an alternative description of the middle convolution
as the coimage of the same morphism.

Let $\mathcal{L}\in \mathrm{Loc}(M(\mathcal{A}),\mathbb{C})$
be a local system and $\chi\colon \mathbb{Z}\rightarrow \mathbb{C}^{\times}$
a nontrivial multiplicative character.
Then  
take a point $m_{0}=(\mathbf{x}_{0},y_{0})\in M(\mathcal{A})$
and consider the stalks at $m_{0}$ of the above morphism,
\begin{equation*}
	H^{1}_{c}(\mathbb{C}\backslash Q_{n+1}^{m_{0}},(\iota_{\mathbf{x}_{0}}^{*}\mathcal{L})_{y}\boxtimes \chi_{(z-y)})
	\rightarrow
	H^{1}(\mathbb{C}\backslash Q_{n+1}^{m_{0}},(\iota_{\mathbf{x}_{0}}^{*}\mathcal{L})_{y}\boxtimes \chi_{(z-y)}),
\end{equation*}
as we saw before Remark \ref{rem:vanishing}.
Under the Poincar\'e duality,
this morphism is equivalent to the morphism between the homology groups with local coefficients,
\begin{equation*}
	H_{1}(\mathbb{C}\backslash Q_{n+1}^{m_{0}},(\iota_{\mathbf{x}_{0}}^{*}\mathcal{L})_{y}\boxtimes \chi_{(z-y)})
	\rightarrow
	H_{1}^{\mathrm{BM}}(\mathbb{C}\backslash Q_{n+1}^{m_{0}},(\iota_{\mathbf{x}_{0}}^{*}\mathcal{L})_{y}\boxtimes \chi_{(z-y)}),
\end{equation*}
where $H_{*}^{\mathrm{BM}}$ stands for the Borel-Moore homology group.

To investigate $H_{1}^{\mathrm{BM}}(\mathbb{C}\backslash Q_{n+1}^{m_{0}},(\iota_{\mathbf{x}_{0}}^{*}\mathcal{L})_{y}\boxtimes \chi_{(z-y)})$
more closely, we compactify $\mathbb{C}$ to $\mathbb{P}^{1}=
\mathbb{C}\sqcup\{\infty\}$,
and set $Q_{n+2}^{m_{0},\infty}:=Q_{n+1}^{m_{0}}\sqcup \{\infty\}$.
Then obviously $\mathbb{C}\backslash Q_{n+1}^{m_{0}}=\mathbb{P}^{1}\backslash Q_{n+2}^{m_{0},\infty}$.

For $\alpha>0$, let $D_{\alpha}(q):=\{z\in \mathbb{C}\mid |z-q|<\alpha\}$
for $q\in \mathbb{C}$ and $D_{\alpha}(\infty):=\{z\in \mathbb{C}\mid |z|>1/\alpha\}$.
Also denote punctured disks by  $D^{*}_{\alpha}(q):=D_{\alpha}(q)\backslash\{q\}$.
Define the union of open disks
\[
	D_{\alpha}(Q):=\bigcup_{q\in Q}D_{\alpha}(q),
\]
for a finite subset $Q\subset \mathbb{P}^{1}$.
\begin{prop}\label{prop:BM}
	There exists an isomorphism
	\begin{multline*}
		H_{*}^{\mathrm{BM}}(\mathbb{P}^{1}(\mathbb{C})\backslash Q_{n+2}^{m_{0},\infty},(\iota_{\mathbf{x}_{0}}^{*}\mathcal{L})_{y}\boxtimes \chi_{(z-y)})
		\\\cong
		\varprojlim_{\alpha}
		H_{*}(\mathbb{P}^{1}(\mathbb{C})\backslash Q_{n+2}^{m_{0},\infty},\mathbb{P}^{1}(\mathbb{C})\backslash D_{\alpha}(Q_{n+2}^{m_{0},\infty}),(\iota_{\mathbf{x}_{0}}^{*}\mathcal{L})_{y}\boxtimes \chi_{(z-y)}).
	\end{multline*}
	For sufficiently small $\alpha>0$, the right hand side is isomorphic to
	\[
		H_{*}(\mathbb{P}^{1}(\mathbb{C})\backslash Q_{n+2}^{m_{0},\infty},\mathbb{P}^{1}(\mathbb{C})\backslash D_{\alpha}(Q_{n+2}^{m_{0},\infty}),(\iota_{\mathbf{x}_{0}}^{*}\mathcal{L})_{y}\boxtimes \chi_{(z-y)}).
	\]	
\end{prop}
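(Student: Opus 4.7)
The proof splits into two parts: first, establishing the inverse-limit description of Borel–Moore homology, and second, verifying that the inverse system stabilizes for sufficiently small $\alpha$.

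For the first isomorphism, I would invoke the standard description of Borel–Moore homology. For a locally compact Hausdorff space $Z$ and a sheaf $\mathcal{F}$, one has
$$H_{*}^{\mathrm{BM}}(Z, \mathcal{F}) \cong \varprojlim_{K} H_{*}(Z, Z\setminus K, \mathcal{F}),$$
where $K$ ranges over a cofinal system of compact subsets of $Z$. In our setting $Z = \mathbb{P}^{1}(\mathbb{C})\setminus Q_{n+2}^{m_{0},\infty}$, and the natural cofinal family is $K_{\alpha} := \mathbb{P}^{1}(\mathbb{C})\setminus D_{\alpha}(Q_{n+2}^{m_{0},\infty})$, which is compact (closed in the compact $\mathbb{P}^{1}$) and exhausts $Z$ as $\alpha\to 0^{+}$. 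Plugging this cofinal family into the standard formula, and matching the notation with the second slot of the pair appearing in the statement, gives the first isomorphism; a reference such as Kashiwara–Schapira or Iversen can be cited for the underlying general fact.

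For the stabilization claim, I would exploit the simple geometry of shrinking punctured disks. Pick $\alpha_{0}>0$ small enough that the closed disks $\overline{D_{\alpha_{0}}(q)}$ for $q\in Q_{n+2}^{m_{0},\infty}$ are pairwise disjoint. For $0<\alpha'<\alpha\leq \alpha_{0}$, radial rescaling on each component of $D_{\alpha}^{*}(Q_{n+2}^{m_{0},\infty})$ produces a deformation retract of pairs that is compatible with the ambient $Z$, and along which the local system $(\iota_{\mathbf{x}_{0}}^{*}\mathcal{L})_{y}\boxtimes \chi_{(z-y)}$ pulls back to itself up to canonical isomorphism. Applying the five-lemma to the long exact sequences of pairs, the induced transition map between the corresponding relative twisted homology groups is an isomorphism; by cofinality the inverse limit is then represented by any single term with $\alpha\leq \alpha_{0}$.

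The principal technical obstacle is in the first step: the inverse-limit description of Borel–Moore homology is standard, but verifying its compatibility with honest local-system coefficients requires careful comparison between the singular relative description and the sheaf-theoretic derived definition $H_{*}^{\mathrm{BM}}(Z,\mathcal{F}) = H^{-*}(Z, \omega_{Z}\otimes \mathcal{F})$. In our two-dimensional, locally constant setting this verification is routine, since each $K_{\alpha}$ admits an annular collar neighborhood around every puncture. One may perform an independent sanity check by computing both sides directly via Poincaré–Lefschetz duality on the punctured Riemann sphere $Z$.
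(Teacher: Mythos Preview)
Your argument is essentially the same as the paper's---deformation retract for stabilization, plus a general comparison theorem for Borel--Moore homology---but the logical order matters and your first step, as stated, is not quite a general fact. The identity
\[
H_{*}^{\mathrm{BM}}(Z,\mathcal{F}) \;\cong\; \varprojlim_{K} H_{*}(Z, Z\setminus K,\mathcal{F})
\]
does not hold for arbitrary locally compact $Z$: the correct general statement (e.g.\ Spanier, Theorem~7.3, which the paper cites) is a short exact sequence with a $\varprojlim^{1}$ obstruction term in one degree higher. The paper therefore reverses your order of argument: it first proves the stabilization (via the same radial deformation retract you describe), observes that an eventually-constant inverse system has $\varprojlim^{1}=0$, and only then invokes the general theorem to obtain the isomorphism.

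Since you do prove stabilization in your second step, the $\varprojlim^{1}$ vanishing is implicitly available to you; the fix is simply to reorder the two parts and make the vanishing explicit before claiming the inverse-limit description. With that adjustment your proof coincides with the paper's.
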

\begin{proof}
We drop the coefficients of the homology groups for simplicity.
Let us first notice that 
for sufficiently small $0<\alpha<\alpha'<\!<1$,
$D_{\alpha}(Q_{n+2}^{m_{0},\infty})$ is a deformation retract of $D_{\alpha'}(Q_{n+2}^{m_{0},\infty})$.
Therefore the natural map 
\[
	H_{*}(\mathbb{P}^{1}(\mathbb{C})\backslash Q_{n+2}^{m_{0},\infty},\mathbb{P}^{1}(\mathbb{C})\backslash D_{\alpha'}(Q_{n+2}^{m_{0},\infty}))
	\rightarrow 
	H_{*}(\mathbb{P}^{1}(\mathbb{C})\backslash Q_{n+2}^{m_{0},\infty},\mathbb{P}^{1}(\mathbb{C})\backslash D_{\alpha}(Q_{n+2}^{m_{0},\infty}))
\]
is an isomorphism in this case.
This shows the second claim, and moreover implies $R^{1}\varprojlim_{\alpha}H_{*}(\mathbb{P}^{1}(\mathbb{C})\backslash Q_{n+2}^{m_{0},\infty},\mathbb{P}^{1}(\mathbb{C})\backslash D_{\alpha}(Q_{n+2}^{m_{0},\infty}))
=\{0\}$.
Therefore the first claim follows from Theorem 7.3 in \cite{Spa}.
\end{proof}
Take $\alpha>0$ sufficiently small so that 
$D_{\alpha}(Q_{n+2}^{m_{0},\infty})$ is the disjoint union of disks $D_{\alpha}(q)$ for $q\in Q_{n+2}^{m_{0},\infty}$.
Then the long exact sequence for relative homology yields the exact sequence
\[
	\bigoplus_{q\in Q_{n+2}^{m_{0},\infty}} H_{1}(D^{*}_{\alpha}(q))\rightarrow H_{1}(\mathbb{P}^{1}\backslash Q_{n+2}^{m_{0},\infty})\rightarrow H_{1}(\mathbb{P}^{1}\backslash Q_{n+2}^{m_{0},\infty},\mathbb{P}^{1}\backslash D_{\alpha}(Q_{n+2}^{m_{0},\infty})),
\]
where the coefficients are omitted for simplicity.
\begin{prop}\label{prop:MCcoim}
Let $\mathcal{L}\in \mathrm{Loc}(M(\mathcal{A}),\mathbb{C})$ be a local system
and $\chi\colon \mathbb{Z}\rightarrow \mathbb{C}^{\times}$ a nontrivial multiplicative character.
Then for $m_{0}=(\mathbf{x}_{0},y_{0})\in M(\mathcal{A})$, the stalk of $\mathrm{MC}(\mathcal{L})_{m_{0}}$ is isomorphic to
\[
	\mathrm{Coker}\left(\bigoplus_{q\in Q_{n+2}^{m_{0},\infty}} H_{1}(D^{*}_{\alpha}(q),(\iota_{\mathbf{x}_{0}}^{*}\mathcal{L})_{y}\boxtimes \chi_{(z-y)})
	\rightarrow H_{1}(\mathbb{C}\backslash Q_{n+1}^{m_{0}},(\iota_{\mathbf{x}_{0}}^{*}\mathcal{L})_{y}\boxtimes \chi_{(z-y)})\right).
\]
\end{prop}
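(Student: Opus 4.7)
The plan is to combine the observations already assembled in this subsection with the elementary homological-algebra fact that in a three-term exact sequence $A \to B \to C$, the image of $B \to C$ equals the cokernel of $A \to B$.

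First I would recall from the discussion preceding Remark \ref{rem:vanishing} and the opening of this subsection that $\mathrm{MC}_\chi(\mathcal{L})_{m_0}$ is realized as the image of the natural morphism $H^1_c \to H^1$ applied to the local system $\mathcal{F} := (\iota_{\mathbf{x}_0}^*\mathcal{L})_y \boxtimes \chi_{(z-y)}$ on $\mathbb{C}\setminus Q_{n+1}^{m_0} = \mathbb{P}^1 \setminus Q_{n+2}^{m_0,\infty}$, and that under Poincar\'e duality this is equivalent to the image of the forget-supports map $H_1(X;\mathcal{F}) \to H_1^{\mathrm{BM}}(X;\mathcal{F})$, where $X := \mathbb{P}^1 \setminus Q_{n+2}^{m_0,\infty}$. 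I would then invoke Proposition \ref{prop:BM} to identify $H_1^{\mathrm{BM}}(X;\mathcal{F})$ with the relative homology $H_1(X, A;\mathcal{F})$ for $A := \mathbb{P}^1 \setminus D_\alpha(Q_{n+2}^{m_0,\infty})$ with $\alpha$ sufficiently small. The three-term exact sequence already displayed above the proposition,
\[
\bigoplus_{q \in Q_{n+2}^{m_0,\infty}} H_1(D^*_\alpha(q);\mathcal{F}) \to H_1(X;\mathcal{F}) \to H_1(X, A;\mathcal{F}),
\]
extracted from the long exact sequence of the pair $(X,A)$, then yields the asserted cokernel description by the exactness observation above.

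The main obstacle is the compatibility step: one must verify that the forget-supports map $H_1(X;\mathcal{F}) \to H_1^{\mathrm{BM}}(X;\mathcal{F})$ corresponds, under the isomorphism of Proposition \ref{prop:BM}, to the natural map $H_1(X;\mathcal{F}) \to H_1(X, A;\mathcal{F})$ appearing in the long exact sequence of the pair. At the level of chain complexes this amounts to the statement that a compactly supported cycle, when regarded as a Borel-Moore cycle and then viewed as a relative chain modulo $C_*(A;\mathcal{F})$, represents the same class, and that this identification is compatible with the structure maps of the inverse system $\varprojlim_\alpha H_*(X, \mathbb{P}^1\setminus D_\alpha(Q_{n+2}^{m_0,\infty});\mathcal{F})$ that realizes Proposition \ref{prop:BM}. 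This naturality is classical but deserves to be written out; once it is in place, the proposition follows immediately.
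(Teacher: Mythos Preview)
Your proposal is correct and follows essentially the same route as the paper's proof: identify the stalk as $\mathrm{Im}(H_1 \to H_1^{\mathrm{BM}})$, use Proposition~\ref{prop:BM} to replace Borel--Moore homology with the relative homology $H_1(X,A)$, and then invoke the long exact sequence of the pair to rewrite the image as the stated cokernel. The paper presents this as a short chain of displayed isomorphisms and, like you, treats the compatibility of the forget-supports map with the map to relative homology as implicit; your explicit flag of this naturality question is the only point where you go beyond what the paper writes, and it is a reasonable remark rather than a divergence in strategy.
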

\begin{proof}
We drop the coefficients of the homology groups as before.
Proposition \ref{prop:BM} and the above long exact sequence show that 
\begin{align*}
\mathrm{MC}(\mathcal{L})_{m_{0}}&=\mathrm{Im}\left(
H_{1}(\mathbb{C}\backslash Q_{n+1}^{m_{0}})
\rightarrow
H_{1}^{\mathrm{BM}}(\mathbb{C}\backslash Q_{n+1}^{m_{0}})\right)\\
&=\mathrm{Im}\left(
H_{1}(\mathbb{P}^{1}\backslash Q_{n+2}^{m_{0},\infty})
\rightarrow
H_{1}^{\mathrm{BM}}(\mathbb{P}^{1}\backslash Q_{n+2}^{m_{0},\infty})
\right)\\
&\cong \mathrm{Im}\left(
H_{1}(\mathbb{P}^{1}\backslash Q_{n+2}^{m_{0},\infty})
\rightarrow
H_{1}(\mathbb{P}^{1}\backslash Q_{n+2}^{m_{0},\infty},\mathbb{P}^{1}\backslash D_{\alpha}(Q_{n+2}^{m_{0},\infty}))
\right)\\
&\cong
\mathrm{Coker}\left(\bigoplus_{q\in Q_{n+2}^{m_{0},\infty}} H_{1}(D^{*}_{\alpha}(q))
\rightarrow H_{1}(\mathbb{P}^{1}\backslash Q_{n+2}^{m_{0},\infty})\right).
\end{align*}
The last isomorphism is our desired one.
\end{proof}
\begin{rem}\label{rem:MCcoim}
Under our setting, the map 
\[
\bigoplus_{q\in Q_{n+2}^{m_{0},\infty}} H_{1}(D^{*}_{\alpha}(q),(\iota_{\mathbf{x}_{0}}^{*}\mathcal{L})_{y}\boxtimes \chi_{(z-y)})
	\rightarrow H_{1}(\mathbb{C}\backslash Q_{n+1}^{m_{0}},(\iota_{\mathbf{x}_{0}}^{*}\mathcal{L})_{y}\boxtimes \chi_{(z-y)})
\]
is injective and thus 
we may regard the source space 
as a $\pi_{1}(M(\mathcal{A}),m_{0})$-submodule 
of the target space.
Indeed, by Proposition \ref{prop:BM}, we have the isomorphism
\[	H_{2}^{\mathrm{BM}}(\mathbb{C}\backslash Q_{n+1}^{m_{0}},(\iota_{\mathbf{x}_{0}}^{*}\mathcal{L})_{y}\boxtimes \chi_{(z-y)})
	\cong
	H_{2}(\mathbb{C}\backslash Q_{n+1}^{m_{0}},\mathbb{C}\backslash D_{\alpha}(Q_{n+1}^{m_{0}}),(\iota_{\mathbf{x}_{0}}^{*}\mathcal{L})_{y}\boxtimes \chi_{(z-y)}),
\]	
and the Poincar\'e duality and Remark \ref{rem:vanishing} tell us that 
the left hand side vanishes.
Therefore the long exact sequence for the relative homology implies the injectivity of the above map.
\end{rem}
\section{Middle convolution for logarithmic Pfaffian systems 
with constant coefficients}\label{sec:additiveMC}
In this section, we recall the definition and fundamental properties of the middle convolution
functor for logarithmic Pfaffian systems with constant coefficients
on $M(\mathcal{A})$,
which was introduced by Haraoka \cite{Har1} as a generalization 
of the functor defined by Dettweiler and Reiter in \cite{DR07}.

Let $V=\{(x_{1},\ldots,x_{l})\in \mathbb{C}^{l}\}$ be a complex affine space of dimension $l$.
For an affine hyperplane $H$ in $V$, let $f_{H}(x)$ denote a defining affine linear form of $H$.

\subsection{Category of logarithmic Pfaffian systems with constant coefficients}\label{sec:logPfaff}
Let $\mathcal{A}$ be an affine hyperplane arrangement in $\mathbb{C}^{l}$.
Then for a finite dimensional $\mathbb{C}$-vector space $E$, we consider 
an $\mathrm{End}_{\mathbb{C}}(E)$-valued logarithmic $1$-form on $\mathbb{C}^{l}$
\[
	\Omega_{A}:=\sum_{H\in \mathcal{A}}A_{H}\frac{df_{H}}{f_{H}}
\]
with coefficients $A_{H}\in \mathrm{End}_{\mathbb{C}}(E)$,
which also satisfies the integrability condition
\[	
	\Omega_{A}\wedge \Omega_{A}=0.
\]
Then $\Omega_{A}$ defines the flat connection 
\[
	\nabla_{A}:=d-\Omega_{A}\colon
	\mathcal{O}_{\mathbb{C}^{l}}\otimes E
	\longrightarrow 
	\varOmega_{\mathbb{C}^{l}}^{1}(*\mathcal{A})\otimes E.
\]
Here $\mathcal{O}_{\mathbb{C}^{l}}$ is the sheaf of holomorphic functions on $\mathbb{C}^{l}$, 
$E$ is regarded as the constant sheaf on $\mathbb{C}^{l}$,
and 
$\varOmega_{\mathbb{C}^{l}}^{1}(*\mathcal{A})$ stands for the sheaf of meromorphic $1$-forms on $\mathbb{C}^{l}$
with poles along the Weil divisor $\mathcal{A}:=\sum_{H\in \mathcal{A}}H$.
Then we call such a connection $\nabla_{A}$
a {\em logarithmic Pfaffian system with constant coefficients} associated to $\mathcal{A}$.

Let us consider two such connections
\[
	\nabla_{A_{i}}=d-\sum_{H\in \mathcal{A}}A_{i,H}\frac{df_{H}}{f_{H}}
\]
with $A_{i,H}\in \mathrm{End}_{\mathbb{C}}(E_{i})$
for $i=1,2$.
Then a {\em morphism} from $\nabla_{A_{1}}$ to $\nabla_{A_{2}}$
is defined to be a $\mathbb{C}$-linear map $\phi\colon E_{1}\rightarrow E_{2}$
such that the following diagram commutes,
\[
\begin{tikzcd}
\mathcal{O}_{\mathbb{C}^{l}}\otimes E_{1}\arrow[r,"\nabla_{A_{1}}"]\arrow[d,"\mathrm{id}\otimes \phi"]&
\varOmega_{\mathbb{C}^{l}}^{1}(*\mathcal{A})\otimes E_{1}\arrow[d,"\mathrm{id}\otimes \phi"]\\
\mathcal{O}_{\mathbb{C}^{l}}\otimes E_{2}\arrow[r,"\nabla_{A_{2}}"]&
\varOmega_{\mathbb{C}^{l}}^{1}(*\mathcal{A})\otimes E_{2}
\end{tikzcd}.
\]
Then we denote the category of logarithmic Pfaffian systems with constant coefficients
associated to $\mathcal{A}$ by 
\[
	\mathrm{Pf}(\mathrm{log}(\mathcal{A})).
\]

\subsection{Convolution functor along a line}
Let us focus on the $x_{l}$-axis of $\mathbb{C}^{l}$ and 
denote it by $Y$ as before.
Here $Y$ is not necessarily to be good with respect to $\mathcal{A}$.
Let us consider the $n=|\mathcal{A}\backslash \mathcal{A}_{Y}|$-dimensional vector space,
\[
	\mathbb{C}^{n}=\bigoplus_{H\in \mathcal{A}\backslash \mathcal{A}_{Y}}\mathbb{C}e_{H}.
\]
Also consider the following additional set of hyperplanes in $\mathbb{C}^{l}$,
\[
	\mathcal{A}^{+Y}:=\{X+Y\mid X\in L_{2}(\mathcal{A}\backslash\mathcal{A}_{Y})\}.
\]
Here we note that if $Y$ is good, then $\mathcal{A}^{+Y}\subset \mathcal{A}$.

For an object $\nabla_{A}\in \mathrm{Pf}(\mathrm{log}(\mathcal{A}))$
with the coefficient $1$-form  
\[
	\Omega_{A}=
	\sum_{H\in \mathcal{A}}A_{H}\frac{df_{H}}{f_{H}}\quad (A_{H}\in \mathrm{End}_{\mathbb{C}}(E)),
\]
and a parameter $\lambda\in \mathbb{C}\backslash \mathbb{Z}$,
Haraoka constructed a logarithmic Pfaffian system
\[	
	\nabla_{c_{\lambda}(A)}\in \mathrm{Pf}(\mathrm{log}(\mathcal{A}\cup\mathcal{A}^{+Y}))	
\]
with the coefficient $1$-form
\[
	\Omega_{c_{\lambda}(A)}=
	\sum_{H\in \mathcal{A}\cup \mathcal{A}^{+Y}}c_{\lambda}(A)_{H}\frac{df_{H}}{f_{H}}
\]
as follows.
Let $E_{H,H'}\in \mathrm{End}_{\mathbb{C}}(\mathbb{C}^{n})$
be the $(H,H')$-matrix units for $H,H'\in \mathcal{A}\setminus \mathcal{A}_{Y}$,
i.e., the endomorphism defined by 
\[
E_{H,H'}e_{H''}=\delta_{H',H''}e_{H}
\]
for $H,H',H''
\in \mathcal{A}\setminus \mathcal{A}_{Y}$.
Then for $H\in \mathcal{A}\cup \mathcal{A}^{+Y}$,
if  
$H\in \mathcal{A}\setminus \mathcal{A}_{Y}$,  we set  
\[
	c_{\lambda}(A)_{H}:=\sum_{H'\in \mathcal{A}\setminus \mathcal{A}_{Y}}(A_{H'}+\lambda\delta_{H,H'}\mathrm{Id}_{E})\otimes E_{H,H'}.
\]
If $H\notin \mathcal{A}\setminus \mathcal{A}_{Y}$,
let us take 
the maximal family $\{H_{i_{1}},\ldots,H_{i_{k}}\}\subset \mathcal{A}\backslash \mathcal{A}_{Y}$ such that 
\[
	\mathrm{codim}(H\cap H_{i_{1}}\cap\cdots\cap H_{i_{k}})=2.
\]
Then we define $c_{\lambda}(A)_{H}$ as follows:
\[
	c_{\lambda}(A)_{H}:=A_{H}\otimes \mathrm{Id}_{\mathbb{C}^{n}}+
	\sum_{j=1}^{k}\left(A_{H_{i_{j}}}\otimes \left(\sum_{h=1}^{k}E_{H_{i_{h}},H_{i_{h}}}-E_{H_{i_{h}},H_{i_{j}}}\right)\right).
\]
The integrability condition for $\Omega_{c_{\lambda}(A)}$
can be verified from that for $\Omega_{A}$
(cf. Remark 2.2 in \cite{Har1}).
%\footnote{Although the proof for Remark 2.2 in \cite{Har1} is valid only 
%for generic $A_{H}\in \mathrm{End}_{\mathbb{C}}(E)$ for $H\in \mathcal{A}$,
%this is also true in general since
%the integrability condition $\Omega_{A}\wedge \Omega_{A}=0$ depends holomorphically on 
%$A_{H}\in \mathrm{End}_{\mathbb{C}}(E)$ for $H\in \mathcal{A}$.}
By the definition, the correspondence
\[
	\mathrm{Pf}(\mathrm{log}(\mathcal{A}))\ni d-\Omega_{A}\longmapsto 
	d-\Omega_{c_{\lambda}(A)}\in \mathrm{Pf}(\mathrm{log}(\mathcal{A}\cup \mathcal{A}^{+Y}))
\]
is functorial, therefore it defines a functor
\[
	c_{\lambda}\colon \mathrm{Pf}(\mathrm{log}(\mathcal{A}))
	\longrightarrow 
	\mathrm{Pf}(\mathrm{log}(\mathcal{A}\cup \mathcal{A}^{+Y})),
\]
called the {\em convolution functor along the line $Y$ with parameter $\lambda$}.

\subsection{Middle convolution functor along a line}
Now we define an analogue of middle convolution functor 
as a functor between the categories of logarithmic Pfaffian systems.

For a logarithmic Pfaffian system $d-\Omega_{A}\in \mathrm{Pf}(\mathrm{log}(\mathcal{A}))$
with $A_{H}\in \mathrm{End}_{\mathbb{C}}(E)\ (H\in \mathcal{A})$,
let us define the following subspaces of $E\otimes_{\mathbb{C}}\mathbb{C}^{n}$,
\[
	K:=\bigoplus_{H\in \mathcal{A}\backslash\mathcal{A}_{Y}}\left(\mathrm{Ker\,}A_{H}\otimes \mathbb{C}e_{H}\right),\quad\quad
	L:=\mathrm{Ker}\left(\sum_{H\in \mathcal{A}\backslash\mathcal{A}_{Y}}A_{H}+\lambda \mathrm{Id}_{E}\right)\otimes \mathbb{C}\sum_{H\in \mathcal{A}\backslash\mathcal{A}_{Y}}e_{H}.
\]
Then it was shown in \cite{Har1} and also in \cite{DR07} that $K\cap L=\{0\}$, and 
the subbundles $\mathcal{O}_{\mathbb{C}^{l}}\otimes K$ and 
$\mathcal{O}_{\mathbb{C}^{l}}\otimes L$ 
of $\mathcal{O}_{\mathbb{C}^{l}}\otimes (E\otimes_{\mathbb{C}} \mathbb{C}^{n})$
are invariant under the connection $\nabla_{C_{\lambda}(A)}$
(see Proposition 2.2 in \cite{Har1}).
Namely, we obtain the sub-connections
\begin{align*}
	&\nabla_{c_{\lambda}(A)}^{K}\colon
	\mathcal{O}_{\mathbb{C}^{l}}\otimes K
	\longrightarrow
	\varOmega_{\mathbb{C}^{l}}^{1}(*(\mathcal{A}\cup \mathcal{A}^{+Y}))\otimes K,\\
	&\nabla_{c_{\lambda}(A)}^{L}\colon
	\mathcal{O}_{\mathbb{C}^{l}}\otimes L
	\longrightarrow
	\varOmega_{\mathbb{C}^{l}}^{1}(*(\mathcal{A}\cup \mathcal{A}^{+Y}))\otimes L.
\end{align*}
Therefore, we moreover obtain the quotient-connection
\[
	\nabla_{\mathrm{mc}_{\lambda}(A)}\colon
	\mathcal{O}_{\mathbb{C}^{l}}\otimes (E\otimes_{\mathbb{C}}\mathbb{C}^{n}/(K\oplus L))
	\longrightarrow
	\varOmega_{\mathbb{C}^{l}}^{1}(*(\mathcal{A}\cup \mathcal{A}^{+Y}))
	\otimes (E\otimes_{\mathbb{C}}\mathbb{C}^{n}/(K\oplus L)).
\]
Then we call the resulting functor
\[
	\mathrm{mc}_{\lambda}\colon \mathrm{Pf}(\mathrm{log}(\mathcal{A}))
	\longrightarrow 
	\mathrm{Pf}(\mathrm{log}(\mathcal{A}\cup \mathcal{A}^{+Y}));\quad
	\nabla_{A}\mapsto \nabla_{\mathrm{mc}_{\lambda}(A)},
\]
the {\em middle convolution functor along the line $Y$} with parameter $\lambda$.

As well as for local systems,
this middle convolution functor also satisfies the composition law under the following assumptions:
\begin{equation}\label{eq:star1}
	\bigcap_{\substack{H'\in \mathcal{A}\backslash \mathcal{A}_{Y},\\H'\neq H}}\mathrm{Ker\,}A_{H'}\cap \mathrm{Ker\,} (A_{H}+\tau \mathrm{Id}_{E})=\{0\}
	\quad \text{ for any }H\in \mathcal{A}\backslash \mathcal{A}_{Y}\text{ and }\tau\in \mathbb{C},
\end{equation}
\begin{equation}\label{eq:star2}
	\sum_{\substack{H'\in \mathcal{A}\backslash \mathcal{A}_{Y},\\H'\neq H}}\mathrm{Im\,}A_{H} + \mathrm{Im\,}(A_{H}+\tau \mathrm{Id}_{E})=E
	\quad \text{ for any }H\in \mathcal{A}\backslash \mathcal{A}_{Y}\text{ and }\tau\in \mathbb{C}.
\end{equation}
\begin{thm}[Theorem 3.1 in \cite{Har1}, Appendix in \cite{DR00}]\label{thm:MCcompos}
Suppose $\nabla_{A}\in \mathrm{Pf}(\mathrm{log}(\mathcal{A}))$ satisfies the assumptions \eqref{eq:star1} and \eqref{eq:star2}.
Then
the following holds for $\lambda,\mu\in \mathbb{C}\backslash \mathbb{Z}$,
\begin{align*}
	\mathrm{mc}_{\mu}\circ \mathrm{mc}_{\lambda}(\nabla_{A})
	&\cong \mathrm{mc}_{\lambda+\mu}(\nabla_{A}),\\
	\mathrm{mc}_{-\lambda}\circ \mathrm{mc}_{\lambda}(\nabla_{A})
	&\cong \nabla_{A}.
\end{align*}
\end{thm}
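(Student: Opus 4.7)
The plan is to mimic the proof of the composition law for middle convolution of Fuchsian systems due to Dettweiler--Reiter, adapted to the hyperplane-arrangement framework of Haraoka. Both sides of the asserted isomorphism are, by construction, subquotients of an iterated tensor product $E\otimes \mathbb{C}^{n}\otimes \mathbb{C}^{n}$, and the task is to produce an explicit $\mathbb{C}$-linear intertwiner between them that commutes with all residue matrices, for $H\in \mathcal{A}\cup \mathcal{A}^{+Y}$ simultaneously.

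First I would unwind the definitions on the underlying vector spaces. Writing $n=|\mathcal{A}\setminus\mathcal{A}_{Y}|$, the composite $\mathrm{mc}_{\mu}\circ \mathrm{mc}_{\lambda}(\nabla_{A})$ is carried by a quotient of $(E\otimes\mathbb{C}^{n})/(K_{\lambda}\oplus L_{\lambda})\otimes \mathbb{C}^{n}$ by the subspaces $K_{\mu}$ and $L_{\mu}$ formed from the residues of $\mathrm{mc}_{\lambda}(A)$, while $\mathrm{mc}_{\lambda+\mu}(\nabla_{A})$ is a quotient of $E\otimes \mathbb{C}^{n}$ by $K_{\lambda+\mu}\oplus L_{\lambda+\mu}$. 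The key step is to construct on $E\otimes \mathbb{C}^{n}\otimes \mathbb{C}^{n}$ an explicit intertwiner, block-built from the identity and the matrices $A_{H}+\lambda\,\mathrm{Id}$, whose image is the $\mu$-shifted copy and whose kernel is exactly the subspace cutting out $K_{\lambda+\mu}\oplus L_{\lambda+\mu}$ after factoring by $K_{\lambda}\oplus L_{\lambda}$. This is exactly the strategy carried out in the appendix of \cite{DR00} for one-variable systems, and promoted in \cite{Har1} to the Pfaffian setting; the intertwining identities for the matrices $c_{\lambda}(A)_{H}$ with $H\in \mathcal{A}$ reduce to the very computations already done there, because the hyperplanes in $\mathcal{A}_{Y}$ contribute trivially ($A_{H}\otimes \mathrm{Id}$) and those in $\mathcal{A}\setminus\mathcal{A}_{Y}$ are indexed by the same combinatorics as the punctures in the Fuchsian case.

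The main obstacle is verifying that the intertwiner respects the residues attached to the new hyperplanes in $\mathcal{A}^{+Y}$, i.e.\ intersections $(H_{1}\cap H_{2})+Y$ with $H_{1},H_{2}\in \mathcal{A}\setminus \mathcal{A}_{Y}$. These residues are given by the rank-one-type formula
\[
c_{\lambda}(A)_{H'}=\sum_{H_{1},H_{2}}A_{H_{1}}\otimes(E_{H_{2},H_{2}}-E_{H_{2},H_{1}})+A_{H_{2}}\otimes(E_{H_{1},H_{1}}-E_{H_{1},H_{2}}),
\]
and one must check that when this residue is computed for the \emph{iterated} convolution and then for $\mathrm{mc}_{\lambda+\mu}$, the two match on the nose under the candidate intertwiner. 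Here is where assumptions \eqref{eq:star1} and \eqref{eq:star2} are indispensable: \eqref{eq:star1} guarantees injectivity on the $K$-direction (so that the candidate map descends to the two successive quotients), while \eqref{eq:star2} guarantees surjectivity modulo $L$ (so that the intertwiner hits everything). Without the ``no extra relations'' provided by these hypotheses, spurious elements in $K_{\mu}\cap L_{\mu}$ could appear inside the iterated quotient, and the bijective identification would break.

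Finally, for the second identity $\mathrm{mc}_{-\lambda}\circ \mathrm{mc}_{\lambda}(\nabla_{A})\cong \nabla_{A}$, the first identity would formally force $\mathrm{mc}_{0}(\nabla_{A})\cong \nabla_{A}$, but $\mathrm{mc}_{0}$ is not directly defined because the hypothesis $\lambda\in \mathbb{C}\setminus \mathbb{Z}$ fails. The cleanest route is therefore to argue directly: in $E\otimes \mathbb{C}^{n}\otimes \mathbb{C}^{n}$, the subspaces $K_{\lambda},L_{\lambda},K_{-\lambda},L_{-\lambda}$ together span a subspace complementary to a canonically embedded copy of $E$, and the connection restricted to that complementary direct summand is, by a direct residue computation, $\nabla_{A}$ itself. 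This is the analogue of the standard ``$\mathrm{MC}_{\chi^{-1}}\circ \mathrm{MC}_{\chi}\cong \mathrm{id}$'' identity at the topological level (Theorem \ref{thm:composition}), and indeed an alternative strategy would be to derive the second identity from the first by a degeneration/continuity argument in $\mu$, but the direct linear-algebraic verification seems safer since $\mathrm{mc}_{0}$ is singular.
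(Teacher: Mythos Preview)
The paper does not supply its own proof of this theorem: it is stated as a known result, with the bracketed attribution to Theorem~3.1 in \cite{Har1} and to the Appendix of \cite{DR00}, and no argument follows the statement. So there is no ``paper's proof'' against which to compare your attempt.

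That said, your sketch is a faithful outline of exactly what those references do. The Dettweiler--Reiter argument proceeds by writing down an explicit block map on $E\otimes\mathbb{C}^{n}\otimes\mathbb{C}^{n}$ built from the matrices $A_{H}$ and $A_{H}+\lambda\,\mathrm{Id}$, checking that it intertwines the residues of $c_{\mu}(c_{\lambda}(A))$ with those of $c_{\lambda+\mu}(A)$, and then verifying that the hypotheses \eqref{eq:star1} and \eqref{eq:star2} are precisely what is needed for this map to descend to an isomorphism on the successive $K\oplus L$ quotients. Haraoka's contribution in \cite{Har1} is to observe that the same computation goes through unchanged for the extra residues $c_{\lambda}(A)_{H'}$ attached to $H'\in\mathcal{A}^{+Y}$, since those are algebraic expressions in the $A_{H}$ of the same rank-one form you wrote down. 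Your treatment of the $\mathrm{mc}_{-\lambda}\circ\mathrm{mc}_{\lambda}$ identity by a direct complement argument, rather than by formally setting $\mu=-\lambda$ in the first identity, also matches what is done in the appendix of \cite{DR00}. In short, your proposal is the cited proof; there is no alternative approach in the present paper to contrast it with.
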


\section{Compatibility for de Rham functor and middle convolution functor along a good line}
In this section, we retain the notation in the previous sections and moreover assume that
$Y$ is a good line for $\mathcal{A}$.
In this case,
the goodness implies that $\mathcal{A}^{+Y}\subset \mathcal{A}$.
Therefore convolution and middle convolution functors introduced in the previous section give
 endofunctors on $\mathrm{Pf}(\mathrm{log}(\mathcal{A}))$,
\[	c_{\lambda},\mathrm{mc}_{\lambda}\colon \mathrm{Pf}(\mathrm{log}(\mathcal{A}))
	\longrightarrow 
	\mathrm{Pf}(\mathrm{log}(\mathcal{A})).	
\]

\subsection{De Rham functor for logarithmic Pfaffian systems with constant coefficients}
Let us denote the category of flat connections on holomorphic vector bundles on $M(\mathcal{A})$
by $\mathrm{Conn}(M(\mathcal{A}))$.
Namely, it consists of pairs $(\mathcal{E},\nabla)$ of holomorphic vector bundles $\mathcal{E}$ on $M(\mathcal{A})$
and flat connections $\nabla\colon \mathcal{E}\rightarrow \mathcal{E}\otimes \varOmega_{M(\mathcal{A})}^{1}$,
with usual bundle morphisms preserving the connections as morphisms.

Then the {\em de Rham functor} is defined by 
\[
	\mathrm{DR}\colon \mathrm{Conn}(M(\mathcal{A}))
	\longrightarrow \mathrm{Loc}(M(\mathcal{A}),\mathbb{C});\quad 
	(\mathcal{E},\nabla)
	\longmapsto
	\mathrm{Ker}(\nabla),
\]
which gives an equivalence of categories (see Theorem 2.17 in \cite{DelEDR}).

The open embedding $j\colon M(\mathcal{A})\hookrightarrow \mathbb{C}^{l}$
defines the pullback functor
\[
	j^{*}\colon \mathrm{Pf}(\mathrm{log}(\mathcal{A}))
	\longrightarrow 
	\mathrm{Conn}(M(\mathcal{A}));\quad 
	\nabla_{A}
	\longmapsto
	j^{*}\nabla_{A}.
\]
Then by composing these functors,
we define the {\em de Rham functor for logarithmic Pfaffian systems with constant coefficients} by
\[
	\mathrm{DR}_{\mathrm{Pf}}\colon \mathrm{Pf}(\mathrm{log}(\mathcal{A}))
	\longrightarrow 
	\mathrm{Loc}(M(\mathcal{A}),\mathbb{C});\quad
	\nabla_{A}
	\longmapsto
	\mathrm{DR}(j^{*}\nabla_{A}).
\]

\subsection{Multivalued section for flat connection}
Let us consider a flat connection $\nabla\in \mathrm{Conn}(X)$ defined on a complex manifold $X$,
and also consider 
the associated local system  
of horizontal sections $\mathcal{L}:=\mathrm{DR}(\nabla)$.
We recall the description of multivalued horizontal sections
for the flat connection following 
the section 6 in \cite{DelEDR}.

Let us take a point 
$x_{0}\in X$ and let
$\pi \colon \widetilde{X}_{x_{0}}\rightarrow X$
be the universal covering of $X$.
Here $\widetilde{X}_{x_{0}}$ is the set of homotopy classes of paths
starting from $x_{0}$ in $X$. 
Also denote the trivial path at $x_{0}$ by $\widetilde{x}_{0}\in \widetilde{X}_{x_{0}}$.
\begin{df}
A {\em multivalued horizontal section} of $\nabla$ is a
global section of the sheaf $\pi^{*}\mathcal{L}$.
\end{df}
Let us recall that there exists a natural isomorphism between the stalks
$(\pi^{*}\mathcal{L})_{\widetilde{x}}$ and $(\mathcal{L})_{x}$ 
for $x\in X$ and $\widetilde{x}\in \pi^{-1}(x)$.
\begin{df}	
	The germ $\tilde{s}_{\widetilde{x}}$ of a
	multivalued horizontal section $\tilde{s}\in \pi^{*}\mathcal{L}(\widetilde{X}_{x_{0}})$
	at $\widetilde{x}$ 
	defines the germ $s_{x}\in (\mathcal{L})_{x}$ through the above isomorphism. 
	We call $s_{x}$ the {\em determination} of $\tilde{s}$ at $x$.
	Conversely, we call $\tilde{s}$ the {\em branch of multivalued horizontal section} of $s_{x}$
	at $\widetilde{x}$.
	
	In particular, the germ $s_{x_{0}}$ determined by the isomorphism 
	$(\pi^{*}\mathcal{L})_{\widetilde{x}_{0}}\cong (\mathcal{L})_{x_{0}}$ 
	is called the {\em determination of base} for $\tilde{s}$, and 
	$\tilde{s}$ is called the {\em standard branch of multivalued horizontal section of base} for $s_{x_{0}}$.
\end{df}

\subsection{Period map}\label{sec:period}
For a nontrivial character 
$\chi\colon \mathbb{Z}\rightarrow \mathbb{C}^{\times}$,
we take a complex number $\lambda\in \mathbb{C}\backslash \mathbb{Z}$
such that $\exp(2\pi i\lambda)=\chi(1)$. 
Let 
\[
	\nabla_{A}=d-\sum_{H\in \mathcal{A}}A_{H}\frac{df_{H}}{f_{H}}\quad (A_{H}\in \mathrm{End}_{\mathbb{C}}(E))
\]
be a logarithmic Pfaffian system with constant coefficients, and 
$\mathcal{L}_{A}:=\mathrm{DR}_{\mathrm{Pf}}(\nabla_{A})$ the associated local system.

We moreover assume that $\nabla_{A}$ satisfies the following assumption.
\begin{ass}\label{as:generic}
For each $H\in \mathcal{A}\backslash\mathcal{A}_{Y}$,
$A_{H}$ has no nonzero integer as an eigenvalue,
and the sum $\sum_{H\in \mathcal{A}\setminus\mathcal{A}_{Y}}A_{H}+\lambda$ also 
has no nonzero integer eigenvalue.
\end{ass}

The purpose of this section is to show the 
following theorem which 
 compares the functors $C_{\chi\,!}$ for local systems and 
$c_{\lambda}$ for logarithmic Pfaffian systems.
\begin{thm}\label{thm:period}
Under Assumption \ref{as:generic},
there exists an isomorphism of local systems on $M(\mathcal{A})$,
\[	C_{\chi\,!}\circ \mathrm{DR}_{\mathrm{Pf}}(\nabla_{A})
	\cong 
	\mathrm{DR}_{\mathrm{Pf}}\circ c_{\lambda}(\nabla_{A}).
\]
\end{thm}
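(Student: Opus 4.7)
My plan is to construct the isomorphism explicitly via a period map built from Euler-type integrals. Fix a base point $m_0 = (\mathbf{x}_0, z_0) \in M(\mathcal{A})$ and a simply connected neighborhood. For each $H \in \mathcal{A} \setminus \mathcal{A}_Y$, let $y_H(\mathbf{x})$ denote the unique root in $y = x_l$ of $f_H(\mathbf{x}, y)$, so that $H \cap \pi_Y^{-1}(\mathbf{x}) = \{(\mathbf{x}, y_H(\mathbf{x}))\}$, and choose a Pochhammer-type cycle $\gamma_H(\mathbf{x}, z)$ in $\pi_Y^{-1}(\mathbf{x}) \setminus \{z\}$ enclosing $y_H(\mathbf{x})$ on which the multivalued integrand $(z - y)^\lambda \tilde{s}_H(\mathbf{x}, y)$ is single-valued, where $\tilde{s}_H$ is any local horizontal section of $\nabla_A$ defined in a neighborhood of the graph of $y_H$. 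Set
\[
    F_H(\mathbf{x}, z) := \int_{\gamma_H(\mathbf{x}, z)} (z - y)^\lambda \tilde{s}_H(\mathbf{x}, y) \, dy \in E, \qquad F(\mathbf{x}, z) := \sum_{H \in \mathcal{A} \setminus \mathcal{A}_Y} F_H(\mathbf{x}, z) \otimes e_H.
\]
Since the $\tilde{s}_H$ range independently over local horizontal sections (each an $(\dim E)$-dimensional space, totaling $n \cdot \dim E$ parameters, where $n = |\mathcal{A}\setminus \mathcal{A}_Y|$), the map $(\tilde{s}_H)_H \mapsto F$ will have the correct target dimension to land as an isomorphism onto horizontal sections of $c_\lambda(\nabla_A)$.

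The first step of the proof is to verify that $F$ is a horizontal section of $c_\lambda(\nabla_A)$. Differentiating under the integral sign and using $\nabla_A \tilde{s}_H = 0$, the contributions from $A_H\, df_H/f_H$ with $H \in \mathcal{A}_Y$ factor outside the integral and yield $A_H \otimes \mathrm{Id}_{\mathbb{C}^n}$, while the contributions from $H \in \mathcal{A} \setminus \mathcal{A}_Y$ together with integration by parts applied to $\partial_z (z-y)^\lambda = -\partial_y (z-y)^\lambda$ reproduce the Haraoka matrices $c_\lambda(A)_H$ including the $\lambda\mathrm{Id}_E$ shift along the diagonal. A partial-fraction identity for $df_{H'}/f_{H'}$ in the $y$-variable, applied when the roots $y_{H_1}(\mathbf{x})$ and $y_{H_2}(\mathbf{x})$ collide as $\mathbf{x}$ varies, generates the $\mathcal{A}^{+Y}$-coefficients $c_\lambda(A)_{H'}$. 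Here the goodness of $Y$ (Definition \ref{df:good}) intervenes essentially: it guarantees that each such collision occurs along a genuine hyperplane $(H_1 \cap H_2) + Y \in \mathcal{A}^{+Y} \subset \mathcal{A}$, so that no spurious residues arise along higher-codimension strata. This is a multiparameter version of the fiberwise computation of Dettweiler--Reiter and Haraoka in \cite{DR07, Har1, Har2}.

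The second step identifies the period map with $C_{\chi,!}$ at the stalk level. By the discussion in Section 3.3 preceding Remark \ref{rem:vanishing}, the stalk of $C_{\chi,!}(\mathcal{L}_A)$ at $m_0$ is $H^1_c(\mathbb{C} \setminus Q_{n+1}^{m_0}, \iota_{\mathbf{x}_0}^* \mathcal{L}_A \boxtimes \chi_{(z_0 - y)})$, and by Poincaré duality this is dual to twisted $H_1$, which admits a basis consisting of small cycles $\gamma_H$ around the punctures $y_H(\mathbf{x}_0)$, each tensored with a basis of the local horizontal sections of $\iota_{\mathbf{x}_0}^* \mathcal{L}_A$ there. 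Assumption \ref{as:generic} ensures the Euler characteristic calculation gives the expected dimension $n \cdot \dim E$ via nontriviality of local monodromies of $\mathcal{L}_A$ at each $y_H$ and of $\mathcal{L}_A \otimes \chi$ at infinity, so that the extra cycle around $y = z_0$ is already spanned by the others. On the Pfaffian side, Proposition \ref{prop:katzreduction} reduces $C_{\chi,!}$ on each fiber to Katz's middle convolution, which matches $c_\lambda$ fiberwise by Theorem 4.7 in \cite{DR07}. The period map $(\gamma_H, \tilde{s}_H) \mapsto F_H|_{m_0}$ realizes this correspondence explicitly, and Step 1 promotes the stalkwise isomorphism to an isomorphism of local systems on $M(\mathcal{A})$ by providing horizontality in the $\mathbf{x}$-directions.

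The main obstacle will be the explicit algebraic computation in Step 1, specifically showing that the partial-fraction expansion of $df_{H'}/f_{H'}$ produces exactly Haraoka's coefficients $c_\lambda(A)_{H'}$ for $H' \in \mathcal{A}^{+Y}$ with no extraneous contributions. This requires a careful analysis of residues at coincidences of the roots $y_H(\mathbf{x})$ and crucially uses the goodness of $Y$, which ensures these coincidences are governed by elements of $\mathcal{A}^{+Y} \subset \mathcal{A}$ so that the integrability (and the comparison of coefficient matrices) is maintained throughout.
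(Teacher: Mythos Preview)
Your period map is not the correct one, and as a consequence Step~1 fails. The map from the stalk $C_{\chi,!}(\mathcal{L}_A)_{m_0}\cong H_1(\mathbb{C}\setminus Q_{n+1}^{m_0},\,\cdot\,)$ to $E\otimes\mathbb{C}^{n}$ must send a \emph{single} twisted cycle $[\sigma\otimes\phi\,(z-y)^\lambda]$ to the tuple
\[
\left(\int_\sigma (z-y)^\lambda\,\phi(\mathbf{x},y)\,\frac{dy}{y-q_H}\right)_{H\in\mathcal{A}\setminus\mathcal{A}_Y};
\]
that is, the $e_H$-component comes from pairing one and the same chain against the logarithmic form $dy/(y-q_H)$, not from integrating the plain form $dy$ over an $H$-dependent Pochhammer cycle with an $H$-dependent section. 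This is exactly the Euler transform appearing in Proposition~2.1 of \cite{Har1} and in \cite{DR07}. To see that your $F=\sum_H F_H\otimes e_H$ is \emph{not} horizontal for $c_\lambda(\nabla_A)$, carry out your own integration by parts: from $\partial_y\tilde{s}_H=\sum_{H'}\frac{A_{H'}}{y-q_{H'}}\tilde{s}_H$ one gets
\[
\partial_z F_H=\sum_{H'}A_{H'}\int_{\gamma_H}(z-y)^\lambda\,\tilde{s}_H\,\frac{dy}{y-q_{H'}},
\]
and the right-hand side is neither a linear combination of the $F_{H'}$ (it involves $\gamma_H$, $\tilde{s}_H$, and logarithmic kernels, none of which appear in your definition of $F_{H'}$) nor does it exhibit the simple pole $1/(z-q_H)$ demanded by $c_\lambda(A)_H$. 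With the logarithmic period map, by contrast, the partial-fraction identity $\frac{1}{(z-y)(y-q_H)}=\frac{1}{z-q_H}\bigl(\frac{1}{y-q_H}+\frac{1}{z-y}\bigr)$ together with the same integration by parts does close up to the Haraoka system.

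Once the period map is corrected, injectivity still needs an argument. The paper obtains it from the nondegenerate pairing between $H_1$ and algebraic de~Rham $H^1$, after proving (Lemma~\ref{lem:basis}, which is where Assumption~\ref{as:generic} is actually used) that the classes $[\frac{dy}{y-q_H}\otimes e^\vee]$ already span the de~Rham side. Your Euler-characteristic count $\dim H^1_c=n\cdot\dim E$ holds without Assumption~\ref{as:generic}, and your appeal to Theorem~4.7 of \cite{DR07} is off target: that result compares the \emph{middle} convolutions $\mathrm{mc}_\lambda$ and $\mathrm{MC}_\chi$, not $c_\lambda$ and $C_{\chi,!}$.
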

Let us give some preparations for the proof of this theorem.
Let us fix a point $m_{0}=(\mathbf{x}_{0},y_{0})\in M(\mathcal{A})$
and  $d_{0}\in \mathbb{C}\backslash Q_{n+1}^{m_{0}}=\mathrm{pr}_{1}^{-1}(m_{0})$.
Here $\mathrm{pr}_{1}\colon M(\mathcal{A})\times_{\pi_{Y}}M(\mathcal{A})\backslash\Delta \rightarrow M(\mathcal{A})$
is the projection onto the first factor.
Let us recall that we have isomorphisms 
\begin{align*}
	C_{\chi\,!}(\mathcal{L}_{A})_{m_{0}}=(R^{1}\mathrm{pr}_{2\,!}((\mathcal{L}_{A})_{\mathbf{x},y}\boxtimes \chi_{(z-y)}))_{m_{0}}
	&
	\cong 
	H^{1}_{c}(\mathbb{C}\backslash Q_{n+1}^{m_{0}},(\iota_{\mathbf{x}_{0}}^{*}\mathcal{L}_{A})_{y}\boxtimes \chi_{(z-y)})\\
	&\cong 
	H_{1}(\mathbb{C}\backslash Q_{n+1}^{m_{0}},(\iota_{\mathbf{x}_{0}}^{*}\mathcal{L}_{A})_{y}\boxtimes \chi_{(z-y)}),
\end{align*}	
as we saw in Section \ref{sec:MCCoim}.
The isomorphism in Theorem \ref{thm:period} will 
be given by the non-degenerate pairing between the homology and cohomology groups
\[
	H_{1}(\mathbb{C}\backslash Q_{n+1}^{m_{0}},(\iota_{\mathbf{x}_{0}}^{*}\mathcal{L}_{A})_{y}\boxtimes \chi_{(z-y)})
	\times 
	H^{1}(\mathbb{C}\backslash Q_{n+1}^{m_{0}},((\iota_{\mathbf{x}_{0}}^{*}\mathcal{L}_{A})_{y}\boxtimes \chi_{(z-y)})^{\vee})
	\rightarrow \mathbb{C}.
\]
Here $\mathcal{L}^{\vee}$ denotes the dual local system of a local system $\mathcal{L}$.

Let us look at the cohomology 
group 
\[
H^{1}(\mathbb{C}\backslash Q_{n+1}^{m_{0}},((\iota_{\mathbf{x}_{0}}^{*}\mathcal{L}_{A})_{y}\boxtimes \chi_{(z-y)})^{\vee})
\]
more closely.
Recall that the local system $((\iota_{\mathbf{x}_{0}}^{*}\mathcal{L}_{A})_{y}\boxtimes \chi_{(z-y)})$
on $\mathbb{C}\backslash Q_{n+1}^{m_{0}}$
is defined by the following connection.
For $H\in \mathcal{A}\backslash \mathcal{A}_{Y}$, let $q_{H}\in \mathbb{C}$
be the point defined by the equation $f_{H}(\mathbf{x}_{0},y)=0$,
and then we can write 
\[
	Q_{n+1}^{m_{0}}=\{q_{H}\mid H\in \mathcal{A}\backslash \mathcal{A}_{Y}\}\sqcup \{y_{0}\}.
\]
Let us consider the flat connections on $\mathbb{C}\backslash Q_{n+1}^{m_{0}}$,
\begin{align*}
	&\nabla_{A}|_{\mathrm{pr}_{1}^{-1}(m_{0})}:=d-\sum_{H\in \mathcal{A}\backslash \mathcal{A}_{Y}}\frac{A_{H}}{y-q_{H}}dy,&
	&\nabla_{\lambda}:=d-\frac{\lambda}{y-y_{0}}dy.
\end{align*}
Then the local system $((\iota_{\mathbf{x}_{0}}^{*}\mathcal{L}_{A})_{y}\boxtimes \chi_{(z-y)})$
on $\mathbb{C}\backslash Q_{n+1}^{m_{0}}$ is defined as the horizontal sections of the 
tensor product connection
\[
	\nabla_{A,m_{0}}^{\lambda}:=\nabla_{A}|_{\mathrm{pr}_{1}^{-1}(m_{0})}\otimes \nabla_{\lambda}.
\]
Therefore the dual local system
$((\iota_{\mathbf{x}_{0}}^{*}\mathcal{L}_{A})_{y}\boxtimes \chi_{(z-y)})^{\vee}$
corresponds to the dual connection $\nabla_{A,m_{0}}^{\lambda\,\vee}$.

Since $\nabla_{A,m_{0}}^{\lambda\,\vee}$ has logarithmic singularities along $Q_{n+2}^{m_{0},\infty}$,
it also define the following meromorphic connection on $\mathbb{P}^{1}$,
\[
\overline{\nabla}_{A,m_{0}}^{\lambda\,\vee}\colon \mathcal{O}_{\mathbb{P}^{1}}(*Q_{n+2}^{m_{0},\infty})\otimes E^{\vee}
\longrightarrow
\varOmega_{\mathbb{P}^{1}}^{1}(*Q_{n+2}^{m_{0},\infty})\otimes E^{\vee},
\]
where 
$\mathcal{O}_{\mathbb{P}^{1}}(*Q_{n+2}^{m_{0},\infty})$
and 
$\varOmega_{\mathbb{P}^{1}}^{1}(*Q_{n+2}^{m_{0},\infty})$
are 
the sheaves of meromorphic functions and
$1$-forms on $\mathbb{P}^{1}$
with poles along $Q_{n+2}^{m_{0},\infty}$ respectively. 
Then 
since the Riemann-Roch theorem implies that
\[
H^{q}(\mathbb{P}^{1},\mathcal{O}_{\mathbb{P}^{1}}(*Q_{n+2}^{m_{0},\infty})\otimes E^{\vee})
=H^{q}(\mathbb{P}^{1},\varOmega_{\mathbb{P}^{1}}^{1}(*Q_{n+2}^{m_{0},\infty})\otimes E^{\vee})=0
\]
for $q>0$,
we have an isomorphism
\begin{equation}\label{eq:deligne}	
H^{1}(\mathbb{C}\backslash Q_{n+1}^{m_{0}},((\iota_{\mathbf{x}_{0}}^{*}\mathcal{L}_{A})_{y}\boxtimes \chi_{(z-y)})^{\vee})
\cong
H^{1}\Gamma(\mathbb{P}^{1},
\varOmega_{\mathbb{P}^{1}}^{*}(*Q_{n+2}^{m_{0},\infty})\otimes E^{\vee},
\overline{\nabla}_{A,m_{0}}^{\lambda\,\vee})
\end{equation}
by Proposition I.2.19 in \cite{DelEDR},
see also (2.10.1) in \cite{DelMos86}.
Here
\[
H^{1}\Gamma(\mathbb{P}^{1},
\varOmega_{\mathbb{P}^{1}}^{*}(*Q_{n+2}^{m_{0},\infty})\otimes E^{\vee},
\overline{\nabla}_{A,m_{0}}^{\lambda\,\vee})
:=\frac{\Gamma(\mathbb{P}^{1},\varOmega_{\mathbb{P}^{1}}^{1}(*Q_{n+2}^{m_{0},\infty}))\otimes E^{\vee})}
{\overline{\nabla}_{A,m_{0}}^{\lambda\,\vee}(\Gamma(\mathbb{P}^{1},\mathcal{O}_{\mathbb{P}^{1}}(*Q_{n+2}^{m_{0},\infty})\otimes E^{\vee}))}.
\]

Then we have the following  lemma.
\begin{lem}\label{lem:basis}
Under Assumption \ref{as:generic}, 
we have 
\[
	\mathrm{dim}_{\mathbb{C}}H^{1}\Gamma(\mathbb{P}^{1},
\varOmega_{\mathbb{P}^{1}}^{*}(*Q_{n+2}^{m_{0},\infty})\otimes E^{\vee},
\overline{\nabla}_{A,m_{0}}^{\lambda\,\vee})
	=n\cdot \mathrm{dim}_{\mathbb{C}}E.
\]
and this space
is generated by the classes of the following $1$-forms,
\[	\frac{dy}{y-q_{H}}\otimes e^{\vee} \quad 
	\text{ for }H\in \mathcal{A}\backslash\mathcal{A}_{Y}\text{ and }e^{\vee}\in E^{\vee}.
\]
\end{lem}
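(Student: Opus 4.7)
The plan is to establish both claims simultaneously: compute the dimension of $H^{1}\Gamma$ via an Euler-characteristic argument, and then verify by explicit algebraic reductions modulo the image of $\overline{\nabla}_{A,m_0}^{\lambda,\vee}$ that the $n\dim_{\mathbb{C}}E$ classes of the stated $1$-forms span the cohomology. Since the number of candidates matches the dimension, spanning forces them to be a basis.

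For the dimension, the identification \eqref{eq:deligne} recasts the question as computing $\dim H^{1}(\mathbb{C}\setminus Q_{n+1}^{m_{0}},((\iota_{\mathbf{x}_{0}}^{*}\mathcal{L}_{A})_{y}\boxtimes \chi_{(z-y)})^{\vee})$. On this open Riemann surface, $H^{2}$ of any local system vanishes for dimension reasons, and $H^{0}$ vanishes because the monodromy of the dual local system around $y_{0}$ acts by a nontrivial scalar multiple of $\mathrm{Id}_{E^{\vee}}$ (since $\lambda\notin\mathbb{Z}$). Hence $\dim H^{1}=-\chi$, and as the topological Euler characteristic of $\mathbb{P}^{1}$ with $n+2$ punctures equals $-n$, we obtain $\chi=-n\dim_{\mathbb{C}}E$, so $\dim H^{1}=n\dim_{\mathbb{C}}E$.

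For the spanning claim, I would decompose an arbitrary element of $\Gamma(\mathbb{P}^{1},\varOmega_{\mathbb{P}^{1}}^{1}(*Q_{n+2}^{m_{0},\infty})\otimes E^{\vee})$ by partial fractions into a polynomial part together with principal parts $\frac{v\,dy}{(y-q)^{k}}$ at each $q\in Q_{n+1}^{m_{0}}$, and then reduce modulo $\overline{\nabla}_{A,m_{0}}^{\lambda,\vee}$ in three stages. (a) For $k\geq 2$, apply the connection to $\frac{v}{(y-q)^{k-1}}$: its leading pole at $q$ has coefficient $(A_{H}^{\vee}-(k-1)\mathrm{Id})v$ when $q=q_{H}$ and $(\lambda-(k-1))v$ when $q=y_{0}$; both operators are invertible on $E^{\vee}$ by Assumption \ref{as:generic}, so every higher-order pole reduces to a simple pole plus lower-order corrections at other singularities. (b) For polynomial terms, expanding $\overline{\nabla}_{A,m_{0}}^{\lambda,\vee}(y^{k}v)$ via $\frac{y^{k}}{y-q}=y^{k-1}+qy^{k-2}+\cdots+q^{k-1}+\frac{q^{k}}{y-q}$ yields top-degree part $y^{k-1}(k\,\mathrm{Id}+\sum_{H}A_{H}^{\vee}+\lambda\,\mathrm{Id})v\,dy$; since $-k$ is a nonzero integer and therefore not an eigenvalue of $\sum_{H}A_{H}+\lambda$ for any $k\geq 1$, this operator is invertible and all polynomial terms (including the constant $v\,dy$, handled by $k=1$) reduce to simple poles at finite points. (c) Finally, from $\overline{\nabla}_{A,m_{0}}^{\lambda,\vee}(v)=\sum_{H}\frac{A_{H}^{\vee}v}{y-q_{H}}\,dy+\frac{\lambda v}{y-y_{0}}\,dy$ and $\lambda\neq 0$, each simple pole at $y_{0}$ rewrites as a combination of simple poles at the $q_{H}$, leaving precisely the stated forms.

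The main obstacle is the bookkeeping: each elimination in (a)--(c) reintroduces lower-order terms at other singular points, so the procedure must be ordered by a suitable well-founded quantity (for instance, processing highest pole orders downward at each finite singularity, then polynomial degree, and finally the residue at $y_{0}$) to guarantee termination. Once the ordering is fixed, Assumption \ref{as:generic} supplies exactly the invertibility needed at every step, and combining the reduction with the dimension computation yields both assertions.
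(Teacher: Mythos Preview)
Your proposal is correct and follows essentially the same approach as the paper: an Euler-characteristic computation for the dimension, followed by explicit reduction of higher-order poles via $\overline{\nabla}_{A,m_{0}}^{\lambda,\vee}$ applied to $\frac{v}{(y-q)^{k-1}}$, and finally elimination of the simple pole at $y_{0}$ using $\lambda\notin\mathbb{Z}$. The only cosmetic difference is at infinity: the paper switches to the coordinate $\eta=1/y$, reduces to the logarithmic form $\frac{d\eta}{\eta}\otimes e^{\vee}$, and invokes the residue theorem on $\mathbb{P}^{1}$ to express it through the finite simple poles, whereas you stay in the $y$-coordinate and reduce the polynomial part by applying the connection to $y^{k}v$---these are the same computation in different charts.
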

\begin{proof}
	Under the poincare duality,
	the dimension of the cohomology group
	\[
	H^{1}\Gamma(\mathbb{P}^{1},
	\varOmega_{\mathbb{P}^{1}}^{*}(*Q_{n+2}^{m_{0},\infty})\otimes E^{\vee},
	\overline{\nabla}_{A,m_{0}}^{\lambda\,\vee})
	\cong 
	H^{1}(\mathbb{C}\backslash Q_{n+1}^{m_{0}},((\iota_{\mathbf{x}_{0}}^{*}\mathcal{L}_{A})_{y}\boxtimes \chi_{(z-y)})^{\vee})
	\]
	is same as the compactly supported cohomology group
	\[
	H_{c}^{1}(\mathbb{P}^{1}\backslash Q_{n+2}^{m_{0},\infty},
		((\iota_{\mathbf{x}_{0}}^{*}\mathcal{L}_{A})_{y}\boxtimes \chi_{(z-y)})).
	\]
	Then the dimension of this comactly supported cohomology group 
	is computed as follows:
	\begin{align*} 
		&\mathrm{dim}_{\mathbb{C}}H_{c}^{1}(\mathbb{P}^{1}\backslash Q_{n+2}^{m_{0},\infty},
		((\iota_{\mathbf{x}_{0}}^{*}\mathcal{L}_{A})_{y}\boxtimes \chi_{(z-y)}))\\
		&=-\sum_{i}(-1)^{i}\mathrm{dim}_{\mathbb{C}}H^{i}_{c}(\mathbb{P}^{1}\backslash Q_{n+2}^{m_{0},\infty},((\iota_{\mathbf{x}_{0}}^{*}\mathcal{L}_{A})_{y}\boxtimes \chi_{(z-y)}))\\
		&=-\chi_{c}(\mathbb{P}^{1}\backslash Q_{n+2}^{m_{0},\infty},((\iota_{\mathbf{x}_{0}}^{*}\mathcal{L}_{A})_{y}\boxtimes \chi_{(z-y)}))
		=-\chi_{c}(\mathbb{P}^{1}\backslash Q_{n+2}^{m_{0},\infty})\cdot \mathrm{dim}_{\mathbb{C}}E\\
		&=(\chi_{c}(Q_{n+2}^{m_{0},\infty})-\chi_{c}(\mathbb{P}^{1}))\cdot \mathrm{dim}_{\mathbb{C}}E
		=(n+2-2)\cdot \mathrm{dim}_{\mathbb{C}}E
		=n\cdot \mathrm{dim}_{\mathbb{C}}E,
	\end{align*}
	where the first equality follows from Remark \ref{rem:vanishing}.

	Let us show the second statement.
	First, we see that 
	the quotient space is generated by the classes of logarithmic
	$1$-forms of the forms
	$\frac{dy}{y-q}\otimes e^{\vee}$ 
	and $ \frac{d\eta}{\eta	}\otimes e^{\vee}$
	for $e^{\vee}\in E^{\vee}$ and $q\in Q_{n+2}^{m_{0},\infty}\backslash \{\infty\}$,
	where we set $\eta=1/y$.
	Indeed, for $q\in Q_{n+2}^{m_{0},\infty}\backslash \{\infty\}$
	and $k\in \mathbb{Z}_{>0}$, we have 
	\[
		\overline{\nabla}_{A,m_{0}}^{\lambda\,\vee}\left(\frac{e^{\vee}}{(y-q)^{k}}\right)
		=\left(-\frac{k}{(y-q)^{k+1}}+\frac{1}{(y-q)^{k}}\left(\sum_{H\in \mathcal{A}\backslash \mathcal{A}_{Y}}\frac{A_{H}^{\vee}}{y-q_{H}}+\frac{\lambda \mathrm{Id}_{E^{\vee}}}{y-y_{0}}\right)\right)dy\otimes e^{\vee},
	\]
	where $A_{H}^{\vee}\in \mathrm{End}_{\mathbb{C}}(E^{\vee})$ is the dual endomorphism of $A_{H}\in \mathrm{End}_{\mathbb{C}}(E)$.
	Then since $A_{H}^{\vee}-k$ and $\lambda-k$ are invertible for any $H\in \mathcal{A}\backslash \mathcal{A}_{Y}$
	by Assumption \ref{as:generic},
	the above equation implies that
	the class of $\frac{dy}{(y-q)^{k+1}}\otimes e^{\vee}$
	can be written as a linear combination of the classes of
	$\frac{dy}{y-q'}\otimes (e^{\vee})'$
	for $q'\in Q_{n+2}^{m_{0},\infty}\backslash \{\infty\}$ and $(e^{\vee})'\in E^{\vee}$. 
	Also, by a similar argument, the class of
	$\frac{d\eta}{\eta^{k+1}} \otimes e^{\vee}$
	can be written as a linear combination of the classes of
	$\frac{dy}{y-q'}\otimes (e^{\vee})'$
	for $q'\in Q_{n+2}^{m_{0},\infty}\backslash \{\infty\}$ and $(e^{\vee})'\in E^{\vee}$. 
	Thus the claim is shown.

	Next, by the residue theorem on $\mathbb{P}^{1}$,
	the class of $ \frac{d\eta}{\eta}\otimes e^{\vee}$
	can be written as a linear combination of the classes of
	$\frac{dy}{y-q}\otimes e^{\vee}$ for $q\in Q_{n+1}^{m_{0}}$.
	Furthermore, since $\lambda\in \mathbb{C}\backslash \mathbb{Z}$
	and the connection $\overline{\nabla}_{A,m_{0}}^{\lambda\,\vee}$
	is written as
	\[
		d+\left(\frac{\lambda\cdot  \mathrm{Id}_{E^{\vee}}}{y-y_{0}}+(\text{holomorphic at }y_{0})\right)dy
	\]
	near $y=y_{0}$, the class of
	$\frac{dy}{y-y_{0}}\otimes e^{\vee}$
	can be also written as a linear combination of the classes of
	$\frac{dy}{y-q}\otimes e^{\vee}$ for 
	$q\in Q_{n+2}^{m_{0},\infty}\backslash \{\infty,y_{0}\}
	=\{q_{H}\mid H\in \mathcal{A}\backslash\mathcal{A}_{Y}\}$.
\end{proof}

Recall that elements
in $H_{1}(\mathbb{C}\backslash Q_{n+1}^{m_{0}},(\iota_{\mathbf{x}_{0}}^{*}\mathcal{L}_{A})_{y}\boxtimes \chi_{(z-y)})$
are represented by linear combinations of closed chains of the form
\[
	\sigma\otimes \phi(\mathbf{x},y)(z-y)^{\lambda},
\]
where $\sigma$ is a $1$-chain in the universal covering space 
$\pi_{d_{0}}\colon \widetilde{(\mathbb{C}\backslash Q_{n+1}^{m_{0}})}_{d_{0}}\rightarrow \mathbb{C}\backslash Q_{n+1}^{m_{0}}$,
and also, $\phi(\mathbf{x},y)$ and $(z-y)^{\lambda}$ are standard branches of multivalued horizontal sections 
for germs of local systems
$((\iota_{\mathbf{x}_{0}}^{*}\mathcal{L}_{A})_{y})_{d_{0}}$ and $((z-y)^{*}\mathcal{K}_{\chi})_{d_{0}}$
respectively.
Therefore we can regard the above $\phi(\mathbf{x},y)(z-y)^{\lambda}$ as the function which is 
multivalued with respect to the variable $y$ and holomorphic with respect to $(\mathbf{x},z)\in M(\mathcal{A})$
near $m_{0}$.

Now we take a base $e_{1},\ldots, e_{r}$ of $E$ and identify $E=\mathbb{C}^{r}$ with $r=\dim_{\mathbb{C}}E$.
Let $e_{1}^{\vee},\ldots,e_{r}^{\vee}$ be the associated dual base of $E^{\vee}$.
Then we define the map 
$\mathrm{Per}_{Y,m_{0}}$ which we call the {\em period map along $Y$}
at $m_{0}$, 
by
\begin{align*}
	&\mathrm{Per}_{Y,m_{0}}\colon
	C_{\chi\,!}(\mathcal{L}_{A})_{m_{0}}
	\longrightarrow (\mathcal{O}_{M(\mathcal{A})}\otimes(E\otimes_{\mathbb{C}}\mathbb{C}^{n}))_{m_{0}};\\
	&[\sigma\otimes \phi(\mathbf{x},y)(z-y)^{\lambda}]
	\longmapsto 
	\left(
		\int_{\sigma}\langle \phi(\mathbf{x},y)(z-y)^{\lambda},e_{i}^{\vee}\rangle\,\pi^{*}_{d_{0}}\left(\left(\frac{\partial}{\partial y}\log f_{H}(\mathbf{x},y)\right)\,dy\right)
	\right)_{\substack{i=1,\ldots,r,\\H\in \mathcal{A}\backslash \mathcal{A}_{Y}}}.
\end{align*}

We are now ready for the proof of Theorem \ref{thm:period}. 
\begin{proof}[Proof of Theorem \ref{thm:period}]
Let us fix a point $m_{0}=(\mathbf{x}_{0},y_{0})\in M(\mathcal{A})$.
Then let us show that the period map
$\mathrm{Per}_{Y,m_{0}}$
gives an isomorphism between the stalks
\[	(R^{1}\mathrm{pr}_{2\,!}((\mathcal{L}_{A})_{\mathbf{x},y}\boxtimes \chi_{(z-y)}))_{m_{0}}
	\longrightarrow 
	(\mathrm{DR}_{\mathrm{Pf}}(c_{\lambda}(\nabla_{A})))_{m_{0}},	
\]
and preserves the monodromy actions.

First we note that Proposition 2.1 in \cite{Har1} tells us that 
$\mathrm{Im\,}\mathrm{Per}_{Y,m_{0}}\subset (\mathrm{DR}_{\mathrm{Pf}}(c_{\lambda}(\nabla_{A})))_{m_{0}}$.
Then since $\mathrm{Per}_{Y,m_{0}}$ preserves the monodromy actions by the definition,
we only need to check that $\mathrm{Per}_{Y,m_{0}}$ is bijective onto $(\mathrm{DR}_{\mathrm{Pf}}(c_{\lambda}(\nabla_{A})))_{m_{0}}.$
As we saw in Lemma \ref{lem:basis},
the dimension of the source space is equal to
$n\cdot \mathrm{dim}_{\mathbb{C}}E$, 
which is also equal to the rank of the connection
$c_{\lambda}(A)_{H}$.
Therefore
it suffices to show that $\mathrm{Per}_{Y,m_{0}}$ is injective.

Under the isomorphism \eqref{eq:deligne},
the non-degenerate pairing 
\[
	H_{1}(\mathbb{C}\backslash Q_{n+1}^{m_{0}},((\iota_{\mathbf{x}_{0}}^{*}\mathcal{L}_{A})_{y}\boxtimes \chi_{(z-y)}))
	\otimes 
	H^{1}(\mathbb{C}\backslash Q_{n+1}^{m_{0}},((\iota_{\mathbf{x}_{0}}^{*}\mathcal{L}_{A})_{y}\boxtimes \chi_{(z-y)})^{\vee})
	\longrightarrow \mathbb{C}
\]
becomes the natural pairing between the homology and cohomology groups,
\[
	[\sigma\otimes \phi(\mathbf{x},y)(z-y)^{\lambda}]
	\otimes
	[\frac{dy}{y-q}\otimes e^{\vee}]
	\longmapsto 
	\int_{\sigma}\langle \phi(\mathbf{x},y),e^{\vee}\rangle (z-y)^{\lambda}\frac{dy}{y-q}.
\]
for 
\begin{align*}
[\sigma\otimes \phi(\mathbf{x},y)(z-y)^{\lambda}]&\in H_{1}(\mathbb{C}\backslash Q_{n+1}^{m_{0}},((\iota_{\mathbf{x}_{0}}^{*}\mathcal{L}_{A})_{y}\boxtimes \chi_{(z-y)})),\\
[\frac{dy}{y-q}\otimes e^{\vee}]&\in H^{1}\Gamma(\mathbb{P}^{1},\varOmega_{\mathbb{P}^{1}}^{*}(*Q_{n+2}^{m_{0},\infty})\otimes E^{\vee},
\overline{\nabla}_{A}^{\vee}|_{Y}),
\end{align*}
see Remark 2.16 in \cite{DelMos86} for instance.
Then since Lemma \ref{lem:basis} tells us that the classes
$[\frac{dy}{y-q_{H}}\otimes e^{\vee}]$ for $H\in \mathcal{A}\backslash\mathcal{A}_{Y}$ and $e^{\vee}$ running through a basis of $E^{\vee}$
form a basis of the cohomology group,
the injectivity of $\mathrm{Per}_{Y,m_{0}}$ follows from the non-degeneracy of this pairing.
\end{proof}

\subsection{Meromorphic solutions for a linear ordinary differential equation
with a simple pole}
Let us consider the following linear ordinary differential equation
\begin{equation}\label{eq:ode}
	\frac{d}{dy}F(y)=\left(\frac{A_{-1}}{y}+A_{0}+A_{1}y+\cdots\right)F(y),
\end{equation}
defined on the punctured disk $D^{*}_{\alpha}(0)=\{y\in \mathbb{C}\mid 0<|y|<\alpha\}$,
where $A_{i}\in M_{k}(\mathbb{C})$ for $i=-1,0,1,\ldots$.
Let ${\mathcal Sol}_{D^{*}_{\alpha}(0)}$ denote
the sheaf of holomorphic solutions of \eqref{eq:ode} on $D^{*}_{\alpha}(0)$.
Then the stalk $({\mathcal Sol}_{D^{*}_{\alpha}(0)})_{y_{0}}$ 
at a base point $y_{0}\in D^{*}_{\alpha}(0)$
becomes a $k$-dimensional module over the group ring
$\mathbb{C}[\pi_{1}(D^{*}_{\alpha}(0),y_{0})]$.

In this section, we recall some fundamental properties of
the space of meromorphic solutions for \eqref{eq:ode}, i.e.,
\[
	({\mathcal Sol}_{D^{*}_{\alpha}(0)})_{y_{0}}^{\pi_{1}(D^{*}_{\alpha}(0),y_{0})}:=
	\{v\in ({\mathcal Sol}_{D^{*}_{\alpha}(0)})_{y_{0}}\mid \gamma\cdot v=v\text{ for }
	\gamma \in \pi_{1}(D^{*}_{\alpha}(0),y_{0})\},
\] 
which will be used in the next section.

Let us first assume that the residue matrix $A_{-1}$ is a nilpotent matrix.
\begin{lem}
	Assume that $A_{-1}$ is a nilpotent matrix.
	Then there exists a collection of 
	germs of holomorphic solutions
	$F_{1}(y),\ldots,F_{k}(y)\in ({\mathcal Sol}_{D^{*}_{\alpha}(0)})_{y_{0}}$
	which form a basis of $({\mathcal Sol}_{D^{*}_{\alpha}(0)})_{y_{0}}$,
	such that
	\[
		c_{1}F_{1}(y)+\cdots +c_{k}F_{k}(y)\in 
		({\mathcal Sol}_{D^{*}_{\alpha}(0)})_{y_{0}}^{\pi_{1}(D^{*}_{\alpha}(0),y_{0})}
	\]
	if and only if
	\[
		(c_{1},\ldots,c_{k})\in \mathrm{Ker\,}A_{-1}.
	\]
	Moreover for $(c_{1},\ldots,c_{k})\in \mathrm{Ker\,}A_{-1}$,
	$c_{1}F_{1}(y)+\cdots +c_{k}F_{k}(y)$
	is holomorphic at $y=0$ and satisfies
	\[
		\left.\left(c_{1}F_{1}(y)+\cdots +c_{k}F_{k}(y)\right)\right|_{y=0}=(c_{1},\ldots,c_{k})^{T}.	
	\]
\end{lem}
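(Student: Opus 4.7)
The plan is to invoke the classical Frobenius/Levelt normal form for a first-order linear ODE at a regular singular point and then read off all three claims from it. More precisely, I would first show that under the nilpotency hypothesis on $A_{-1}$, there exists a holomorphic $k\times k$ matrix-valued function $P(y)=I+P_{1}y+P_{2}y^{2}+\cdots$ on some disk $D_{\alpha'}(0)$ such that
\[
	\Phi(y):=P(y)\,y^{A_{-1}}
\]
is a fundamental matrix solution of the system on $D^{*}_{\alpha'}(0)$. The existence of $P(y)$ is the classical Fuchsian gauge result: substituting $\Phi(y)$ into the equation and matching coefficients yields a recursion of the form $(n\cdot I-\mathrm{ad}\,A_{-1})(P_{n})=R_{n}(P_{0},\ldots,P_{n-1})$, and since the eigenvalues of $\mathrm{ad}\,A_{-1}$ are all zero (as $A_{-1}$ is nilpotent, in particular has no nonzero integer eigenvalue and no integer difference between distinct eigenvalues), the operator $n\cdot I-\mathrm{ad}\,A_{-1}$ is invertible for every $n\geq 1$, and the resulting series has positive radius of convergence by a standard majorant argument. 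The main routine obstacle here is the convergence estimate, which I would treat as a standard citation rather than rederive.

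Next, I would take $F_{1}(y),\ldots,F_{k}(y)$ to be the columns of $\Phi(y)$, which form a basis of $(\mathcal{S}\!ol_{D^{*}_{\alpha}(0)})_{y_{0}}$. A loop $\gamma$ generating $\pi_{1}(D^{*}_{\alpha}(0),y_{0})$ acts on $y^{A_{-1}}=\exp(A_{-1}\log y)$ by multiplication on the right by $M:=\exp(2\pi i\,A_{-1})$, and on $P(y)$ trivially, so the monodromy is encoded by $M$. Then for $c=(c_{1},\ldots,c_{k})^{T}$,
\[
	c_{1}F_{1}(y)+\cdots+c_{k}F_{k}(y)=\Phi(y)c
\]
lies in $(\mathcal{S}\!ol_{D^{*}_{\alpha}(0)})_{y_{0}}^{\pi_{1}}$ if and only if $Mc=c$, i.e.\ $(M-I)c=0$. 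Using nilpotency of $A_{-1}$,
\[
	M-I=\exp(2\pi i\,A_{-1})-I=2\pi i\,A_{-1}\cdot\Bigl(I+\pi i\,A_{-1}+\tfrac{(2\pi i)^{2}}{3!}A_{-1}^{2}+\cdots\Bigr),
\]
and the bracketed factor is invertible because it is unipotent. Therefore $\ker(M-I)=\ker A_{-1}$, which is exactly the first equivalence claimed.

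Finally, for $c\in\ker A_{-1}$, nilpotency again gives $A_{-1}^{j}c=0$ for all $j\geq 1$, so the (finite) expansion
\[
	y^{A_{-1}}c=c+(\log y)A_{-1}c+\tfrac{(\log y)^{2}}{2!}A_{-1}^{2}c+\cdots=c
\]
reduces to $c$. Hence $\Phi(y)c=P(y)c$, which is holomorphic at $y=0$ with value $P(0)c=I\cdot c=c=(c_{1},\ldots,c_{k})^{T}$. The chief delicate point is the construction of the holomorphic gauge $P(y)$ with $P(0)=I$; once that is in hand, both the characterization of meromorphic solutions via $\ker A_{-1}$ and the explicit value at $y=0$ fall out directly from the termination of the series defining $y^{A_{-1}}c$.
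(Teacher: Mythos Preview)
Your proof is correct and follows essentially the same route as the paper: both construct a fundamental matrix of the form $P(y)\,y^{A_{-1}}$ with $P$ holomorphic and $P(0)=I_k$, identify the monodromy with $\exp(2\pi i A_{-1})$, and then use nilpotency to show $\ker(\exp(2\pi i A_{-1})-I)=\ker A_{-1}$ and $y^{A_{-1}}c=c$ for $c\in\ker A_{-1}$. The only difference is that the paper cites the existence of the holomorphic gauge to a reference (Theorem~1 in \cite{BV83}), whereas you sketch the Frobenius recursion directly.
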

\begin{proof}
Since $A_{-1}$ is a nilpotent matrix which has $0$ as the only eigenvalue,
we can take a matrix of fundamental solutions for \eqref{eq:ode} of the form
\[
	X(y)y^{A_{-1}}
\]
with $X(y)\in \mathrm{GL}_{k}(\mathcal{O}_{D_{\alpha}(0)}(D_{\alpha}(0)))$
such that $X(0)=I_{k}$, see Theorem 1 in \cite{BV83} for instance.
Then for a simple loop $\gamma$ around the origin in $D^{*}_{\alpha}(0)$ with the base point $y_{0}$,
the monodromy action on the germ $(X(y)y^{A_{-1}})_{y_{0}}$ is given by
\begin{equation}\label{eq:monodromy}
	\gamma\cdot (X(y)y^{A_{-1}})_{y_{0}}=(X(y)y^{A_{-1}})_{y_{0}}\exp(2\pi i A_{-1}).
\end{equation}
Thus for $v\in \mathbb{C}^{k}$, we have 
$(X(y)y^{A_{-1}})_{y_{0}}\cdot v\in ({\mathcal Sol}_{D^{*}_{\alpha}(0)})_{y_{0}}^{\pi_{1}(D^{*}_{\alpha}(0),y_{0})}$
if and only if
\[
	\exp(2\pi i A_{-1})v=v.
\]
Since 
\[
	\mathrm{Ker}(\exp(2\pi i A_{-1})-\mathrm{Id}_{k})=\mathrm{Ker\,}A_{-1},
\]
we have 
\[
	(X(y)y^{A_{-1}})_{y_{0}}\cdot v\in ({\mathcal Sol}_{D^{*}_{\alpha}(0)})_{y_{0}}^{\pi_{1}(D^{*}_{\alpha}(0),y_{0})}
	\iff
	v\in \mathrm{Ker\,}A_{-1},
\]
as desired.

Moreover note that  $y^{A_{-1}}v=v$ for $v\in \mathrm{Ker\,}A_{-1}$.
Indeed, we have 
\[
	y^{A_{-1}}v=\sum_{j=0}^{\infty}\frac{(\log y)^{j}}{j!}A_{-1}^{j}v=v,
\]
since $A_{-1}^{j}v=0$ for all $j\geq 1$.
Thus for $v\in \mathrm{Ker\,}A_{-1}$,
$X(y)y^{A_{-1}}v=X(y)v$ is holomorphic at $y=0$ and satisfies
\[	\left.X(y)y^{A_{-1}}v\right|_{y=0}=X(0)v=I_{k}v=v.	\]
\end{proof}
Next we assume that the residue matrix $A_{-1}$ has no nonzero integer as an eigenvalue.
Then we can show the same as above.
\begin{prop}\label{prop:sol}
	Assume that $A_{-1}$ has no nonzero integer as an eigenvalue.
	Then there exists a collection of 
	germs of holomorphic solutions
	$F_{1}(y),\ldots,F_{k}(y)\in ({\mathcal Sol}_{D^{*}_{\alpha}(0)})_{y_{0}}$
	which form a basis of $({\mathcal Sol}_{D^{*}_{\alpha}(0)})_{y_{0}}$,
	such that
	\[
		c_{1}F_{1}(y)+\cdots +c_{k}F_{k}(y)\in 
		({\mathcal Sol}_{D^{*}_{\alpha}(0)})_{y_{0}}^{\pi_{1}(D^{*}_{\alpha}(0),y_{0})}
	\]
	if and only if
	\[
		(c_{1},\ldots,c_{k})\in \mathrm{Ker\,}A_{-1}.
	\]
	Moreover for $(c_{1},\ldots,c_{k})\in \mathrm{Ker\,}A_{-1}$,
	$c_{1}F_{1}(y)+\cdots +c_{k}F_{k}(y)$
	is holomorphic at $y=0$ and satisfies
	\[
		\left.\left(c_{1}F_{1}(y)+\cdots +c_{k}F_{k}(y)\right)\right|_{y=0}=(c_{1},\ldots,c_{k})^{T}.	
	\]
\end{prop}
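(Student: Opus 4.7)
The plan is to reduce to the nilpotent case handled in the previous lemma by exploiting the generalized eigenspace decomposition of $A_{-1}$. Write $\mathbb{C}^{k}=V_{0}\oplus V_{1}$, where $V_{0}=\mathrm{Ker}(A_{-1}^{k})$ is the generalized $0$-eigenspace and $V_{1}$ is the direct sum of the generalized eigenspaces corresponding to the nonzero eigenvalues of $A_{-1}$. By hypothesis, the eigenvalues of $A_{-1}|_{V_{1}}$ are all nonzero non-integers, so for any $\lambda\in\mathrm{Spec}(A_{-1}|_{V_{0}})=\{0\}$ and $\mu\in\mathrm{Spec}(A_{-1}|_{V_{1}})$ the difference $\lambda-\mu=-\mu$ is never a nonzero integer. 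Thus $A_{-1}$ is non-resonant \emph{across} the two blocks, even though it may well be resonant within $V_{1}$ itself.

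Next, I would invoke the classical block-diagonalization result at a regular singular point: there exists a holomorphic gauge transformation $P(y)\in\mathrm{GL}_{k}(\mathcal{O}_{D_{\alpha}(0)})$ with $P(0)=I_{k}$ such that in the new unknown $G(y)=P(y)^{-1}F(y)$ equation \eqref{eq:ode} decouples into two independent systems on $V_{0}$ and $V_{1}$ with residue matrices $A_{-1}|_{V_{0}}$ and $A_{-1}|_{V_{1}}$ respectively. The iterative procedure for constructing $P(y)$ converges precisely under the cross-block non-resonance verified above.

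On the $V_{0}$-block, $A_{-1}|_{V_{0}}$ is nilpotent, so the previous lemma supplies a basis of the solution space with the required invariance and holomorphy at $y=0$. On the $V_{1}$-block, a fundamental matrix takes the form $X_{1}(y)y^{A_{-1}|_{V_{1}}}$, and, as in \eqref{eq:monodromy}, the monodromy operator is $\exp(2\pi iA_{-1}|_{V_{1}})$, whose eigenvalues are $\exp(2\pi i\mu)$ for $\mu$ nonzero non-integer, none equal to $1$. Hence $\exp(2\pi iA_{-1}|_{V_{1}})-I$ is invertible, so there is no nonzero monodromy-invariant solution on the $V_{1}$-block, and at the same time $\mathrm{Ker}\,A_{-1}|_{V_{1}}=\{0\}$. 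Pulling the decoupled basis back through $P(y)$ produces the desired basis $F_{1},\dots,F_{k}$ of $(\mathcal{S}ol_{D_{\alpha}^{*}(0)})_{y_{0}}$; the invariant subspace is identified with $\mathrm{Ker}\,A_{-1}|_{V_{0}}=\mathrm{Ker}\,A_{-1}$, and $P(0)=I_{k}$ ensures that an invariant solution $\sum c_{i}F_{i}(y)$ extends holomorphically to $y=0$ with value exactly $(c_{1},\dots,c_{k})^{T}$.

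The main technical point is invoking the block-diagonalization cleanly: while standard, one must be careful to arrange $P(0)=I_{k}$ so as to preserve the normalization inherited from the nilpotent case. If a direct appeal to a textbook proof is preferred, one can alternatively build $P(y)$ by recursion order by order in $y$, with the cross-block components determined by Sylvester-type equations $\mu X-XA_{-1}|_{V_{0}}=\cdots$ and $A_{-1}|_{V_{1}}X-X\cdot 0=\cdots$ shifted by integers, all of which are invertible thanks to the non-resonance across blocks; this makes the convergence transparent and keeps the normalization at $y=0$ manifest.
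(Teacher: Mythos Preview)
Your approach is correct and essentially matches the paper's: both split $\mathbb{C}^{k}$ into the generalized $0$-eigenspace $V_{0}$ (nilpotent block) and its $A_{-1}$-invariant complement $V_{1}$ (invertible block), reduce the $V_{0}$-block to the preceding lemma, and observe that the monodromy on the $V_{1}$-block has no eigenvalue~$1$. The paper cites a result of Babbitt--Varadarajan to produce the block-form fundamental matrix in one stroke (with a possibly modified exponent $\tilde{A}_{-1}''$ on $V_{1}$ and $X(y)$ merely meromorphic), whereas you first block-diagonalize holomorphically via the cross-block non-resonance and then treat the blocks separately; the content is the same.

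One small slip to clean up: since you explicitly allow $A_{-1}|_{V_{1}}$ to be internally resonant, the fundamental matrix on $V_{1}$ need not have the form $X_{1}(y)y^{A_{-1}|_{V_{1}}}$, and the monodromy need not literally be $\exp(2\pi i A_{-1}|_{V_{1}})$. Your argument, however, only uses its \emph{eigenvalues}, which are indeed $\exp(2\pi i\mu)$ for $\mu\in\mathrm{Spec}(A_{-1}|_{V_{1}})$ regardless of resonance, so the conclusion stands.
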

\begin{proof}
	Under a linear transformation, we may assume that
	$A_{-1}$ is a block diagonal matrix of the form
	\[	A_{-1}=
		\begin{pmatrix}
			A_{-1}' & 0\\
			0 & A_{-1}''
		\end{pmatrix},
	\]
	where $A_{-1}'$ is a nilpotent matrix and
		$A_{-1}''$ is an invertible matrix.
	Namely $\mathbb{C}^{k}$ is written as the sum   
	$\mathbb{C}^{k}=V'\oplus V''$ of $A_{-1}$-invariant subspaces 
	such that 
	$A_{-1}'=A_{-1}|_{V'}$, $A_{-1}''=A_{-1}|_{V''}$,
	and $A_{-1}=A_{-1}'\oplus A_{-1}''$.

	Then since 	$A_{-1}'$ has $0$ as the only eigenvalue
	and $A_{-1}''$ has no integer as an eigenvalue,
	there exists an invertible matrix $\tilde{A}_{-1}''$ 
	of the same size as $A_{-1}''$ 
	such that
	the differential equation $(\ref{eq:ode})$
	has a matrix of fundamental solutions of the form
	\[		
		X(y)\begin{pmatrix}y^{A_{-1}'}&0\\0&y^{\tilde{A}_{-1}''}\end{pmatrix}
	\]
	with $X(y)\in \mathrm{GL}_{k}(\mathcal{M}_{D_{\alpha}(0)}(D_{\alpha}(0)))$
	such that $(X(y)|_{V'})|_{y=0}=\mathrm{id}_{V'}$, see Proposition 3.4 in \cite{BV83} for instance.
	Here $\mathcal{M}_{D_{\alpha}(0)}(D_{\alpha}(0))$
	is the sheaf of meromorphic functions on $D_{\alpha}(0).$
	Then since 
	\[
		\mathrm{Ker}\left(\begin{pmatrix}
			\exp(2\pi i A_{-1}')&0\\
			0&\exp(2\pi i \tilde{A}_{-1}'')			
		\end{pmatrix}-I_{k}\right)
		=\mathrm{Ker\,}A_{-1},
	\]
	the same argument as in the previous lemma shows the desired result.
\end{proof}	
\subsection{Middle convolution functor and de Rham functor}
In this section, we give a proof of the following theorem.
\begin{thm}\label{thm:MCdeRham}
Let $\nabla_{A}\in \mathrm{Pf}(\log \mathcal{A})$ be a logarithmic Pfaffian system
satisfying Assumption \ref{as:generic} with respect to a parameter $\lambda\in \mathbb{C}\backslash \mathbb{Z}$.
Let $\chi\colon \mathbb{Z}\rightarrow \mathbb{C}^{\times}$ be the character defined by
$\chi(1)=\exp(2\pi i\lambda)$.
Then there exists an isomorphism of local systems on $M(\mathcal{A})$,
\[	\mathrm{MC}_{\chi}\circ \mathrm{DR}_{\mathrm{Pf}}(\nabla_{A})
	\cong 
	\mathrm{DR}_{\mathrm{Pf}}\circ \mathrm{mc}_{\lambda}(\nabla_{A}).
\]
\end{thm}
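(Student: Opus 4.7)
The plan is to leverage Theorem~\ref{thm:period} together with the coimage description of $\mathrm{MC}_{\chi}$ from Proposition~\ref{prop:MCcoim}. Fix $m_{0}=(\mathbf{x}_{0},y_{0})\in M(\mathcal{A})$. Theorem~\ref{thm:period} already gives a monodromy-equivariant isomorphism
\[
\mathrm{Per}_{Y,m_{0}}\colon C_{\chi\,!}(\mathcal{L}_{A})_{m_{0}}\xrightarrow{\sim}\mathrm{DR}_{\mathrm{Pf}}(c_{\lambda}(\nabla_{A}))_{m_{0}}=E\otimes_{\mathbb{C}}\mathbb{C}^{n},
\]
and by construction $\mathrm{mc}_{\lambda}(\nabla_{A})$ is the quotient of $c_{\lambda}(\nabla_{A})$ by the sub-connection on $\mathcal{O}_{\mathbb{C}^{l}}\otimes(K\oplus L)$. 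On the other side, Proposition~\ref{prop:MCcoim} writes $\mathrm{MC}_{\chi}(\mathcal{L}_{A})_{m_{0}}$ as the cokernel of the natural map from $\bigoplus_{q\in Q_{n+2}^{m_{0},\infty}}H_{1}(D_{\alpha}^{*}(q),(\iota_{\mathbf{x}_{0}}^{*}\mathcal{L}_{A})_{y}\boxtimes\chi_{(z-y)})$ into $C_{\chi\,!}(\mathcal{L}_{A})_{m_{0}}$. Hence the whole theorem reduces to verifying that $\mathrm{Per}_{Y,m_{0}}$ carries this image onto $K\oplus L$.

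I would carry out this identification summand-by-summand in $q\in Q_{n+2}^{m_{0},\infty}=\{q_{H}\mid H\in\mathcal{A}\backslash\mathcal{A}_{Y}\}\sqcup\{y_{0}\}\sqcup\{\infty\}$. For $q=q_{H}$, the residue of $\nabla_{A,m_{0}}^{\lambda}$ at $q_{H}$ is $A_{H}$, so Proposition~\ref{prop:sol} (applicable thanks to Assumption~\ref{as:generic}) identifies the local invariants with $\ker A_{H}$; the generic cycle is a small loop $\sigma_{q_{H}}$ tensored with a single-valued solution germ valued in $\ker A_{H}$, and the associated period integral $\int_{\sigma_{q_{H}}}\langle\phi,e_{i}^{\vee}\rangle(z-y)^{\lambda}\frac{c_{H'}\,dy}{y-q_{H'}}$ is computed by a single residue that vanishes unless $H'=H$. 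Summing over $H$ identifies these contributions precisely with $K=\bigoplus_{H}\ker A_{H}\otimes\mathbb{C}e_{H}$. For $q=y_{0}$, the residue of $\nabla_{A,m_{0}}^{\lambda}$ is $\lambda\cdot\mathrm{Id}_{E}$ and $\lambda\notin\mathbb{Z}$, so the local homology vanishes. For $q=\infty$, the local monodromy is $\exp(-2\pi i(\sum_{H}A_{H}+\lambda))$, whose fixed vectors coincide with $\ker(\sum_{H}A_{H}+\lambda\,\mathrm{Id}_{E})$ by the second half of Assumption~\ref{as:generic}; applying the residue theorem on $\mathbb{P}^{1}$ to $\langle\phi,e_{i}^{\vee}\rangle(z-y)^{\lambda}\frac{dy}{y-q_{H'}}$ and using $(\sum_{H}A_{H}+\lambda)\phi=0$, I would show that the resulting period vector is independent of the index $H'$, i.e.\ lands in the diagonal $\ker(\sum_{H}A_{H}+\lambda\,\mathrm{Id}_{E})\otimes\mathbb{C}\sum_{H}e_{H}=L$.

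Combining the three cases, $\mathrm{Per}_{Y,m_{0}}$ sends the local homology onto $K\oplus L$. Passing to cokernels yields a stalkwise isomorphism $\mathrm{MC}_{\chi}(\mathcal{L}_{A})_{m_{0}}\cong(E\otimes\mathbb{C}^{n})/(K\oplus L)=\mathrm{DR}_{\mathrm{Pf}}(\mathrm{mc}_{\lambda}(\nabla_{A}))_{m_{0}}$, and the naturality in $m_{0}$ of every construction upgrades this to an isomorphism of local systems on $M(\mathcal{A})$. The main obstacle is the analysis at $q=\infty$: the multivalued integrand $\phi(y)(z-y)^{\lambda}$ forces a careful choice of compatible branches along the homologies produced by the residue theorem, and one must verify that the resulting vector is genuinely diagonal in the $e_{H}$-basis rather than some other combination permitted by the rank-one invariant subspace at infinity. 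All other steps, once the local $H_{1}$ groups have been identified via Proposition~\ref{prop:sol}, amount to routine residue computations.
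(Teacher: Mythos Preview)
Your proposal is correct and follows essentially the same route as the paper: the paper packages the key identification of $\bigoplus_{q}H_{1}(D^{*}_{\alpha}(q),\cdot)$ with $\mathrm{DR}_{\mathrm{Pf}}(\nabla^{K}_{c_{\lambda}(A)}\oplus\nabla^{L}_{c_{\lambda}(A)})$ via the period map as a separate proposition (Proposition~\ref{prop:KL}), and then the theorem follows by passing to cokernels exactly as you describe. The one minor difference is at $q=\infty$: rather than invoking the residue theorem on $\mathbb{P}^{1}$ together with the relation $(\sum_{H}A_{H}+\lambda)\phi=0$, the paper applies Proposition~\ref{prop:sol} directly at $\infty$ to produce a solution holomorphic there with prescribed initial value $v\in\ker(\sum_{H}A_{H}+\lambda)$, and a single Cauchy-type residue computation then gives $\int_{\gamma_{\infty}}=2\pi i\,y_{0}^{-\lambda}v$ for every $H'$, which immediately yields the diagonal vector in $L$ and dissolves the branch-matching obstacle you flag.
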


Let us recall the $\mathbb{C}[\pi_{1}(M(\mathcal{A}),m_{0})]$-module map
\[
	\bigoplus_{q\in Q_{n+2}^{m_{0},\infty}} H_{1}(D^{*}_{\alpha}(q),(\iota_{\mathbf{x}_{0}}^{*}\mathcal{L}_{A})_{y}\boxtimes \chi_{(z-y)})
	\rightarrow H_{1}(\mathbb{C}\backslash Q_{n+1}^{m_{0}},(\iota_{\mathbf{x}_{0}}^{*}\mathcal{L}_{A})_{y}\boxtimes \chi_{(z-y)}),
\]
in Proposition \ref{prop:MCcoim}.
Since, as we noted in Remark \ref{rem:MCcoim}, this map is injective,
we regard the source space as a submodule of the target space.

Since $D^{*}_{\alpha}(q)$ is homotopy equivalent to $S^{1}$,
the homology group 
$H_{1}(D^{*}_{\alpha}(q),(\iota_{\mathbf{x}_{0}}^{*}\mathcal{L}_{A})_{y}\boxtimes \chi_{(z-y)})$
has a simple description as follows.
\begin{lem}
Let $\mathcal{L}$ be a local system on $S^{1}$
and $\gamma$ be a simple loop around $S^{1}$ with a base point $d\in S^{1}$.
Let $\tilde{\gamma}$ be a lift of $\gamma$ to the universal covering space
$\pi_{d}\colon \widetilde{S^{1}}_{d}\rightarrow S^{1}$.
Denote the $\pi_{1}(S^{1},d)$-invariant subspace of the stalk $\mathcal{L}_{d}$
by $\mathcal{L}_{d}^{\pi_{1}(S^{1},d)}$.
Then the map 
\[
	\mathcal{L}_{d}^{\pi_{1}(S^{1},d)}
	\longrightarrow 
	H_{1}(S^{1},\mathcal{L})
	;\quad
	v\longmapsto [\tilde{\gamma}\otimes v]
\]
is an isomorphism of vector spaces.
	
Therefore, in particular for $q\in Q_{n+1}^{m_{0},\infty}$,
we have 
\begin{multline*}
	\mathrm{dim}_{\mathbb{C}}H_{1}(D^{*}_{\alpha}(q),(\iota_{\mathbf{x}_{0}}^{*}\mathcal{L})_{y}\boxtimes \chi_{(z-y)})
	\\=\begin{cases}
		\mathrm{dim}_{\mathbb{C}}\mathrm{Ker\,}A_{H} & \text{ if} \quad q=q_{H}\text{ for }H\in \mathcal{A}\backslash\mathcal{A}_{Y},\\
		0 & \text{ if}\quad q=y_{0},\\
		\mathrm{dim}_{\mathbb{C}}\mathrm{Ker}\left(\sum_{H\in \mathcal{A}\backslash \mathcal{A}_{Y}}A_{H}+\lambda\right) & \text{ if}\quad q=\infty,
	\end{cases}
\end{multline*}
under Assumption \ref{as:generic}.
\end{lem}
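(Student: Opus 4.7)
The lemma splits into an abstract part (the isomorphism $\mathcal{L}_d^{\pi_1(S^1,d)} \cong H_1(S^1, \mathcal{L})$) and a concrete part (the three dimension formulas). For the abstract part, I would use the CW decomposition of $S^1$ with a single $0$-cell at $d$ and a single $1$-cell whose attaching map is $\gamma$. Lifting to the universal cover and unwinding the standard formula for the cellular chain complex with local coefficients yields
\[
0 \longrightarrow \mathcal{L}_d \xrightarrow{\,T - \mathrm{id}\,} \mathcal{L}_d \longrightarrow 0,
\]
where $T \in \mathrm{Aut}(\mathcal{L}_d)$ is the monodromy of $\mathcal{L}$ along $\gamma$. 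Consequently $H_1(S^1, \mathcal{L}) = \ker(T - \mathrm{id}) = \mathcal{L}_d^{\pi_1(S^1,d)}$, and by construction the cycle class associated to $v \in \ker(T - \mathrm{id})$ is $[\tilde{\gamma} \otimes v]$, identifying the map in the statement with this standard isomorphism.

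For the concrete part, I apply this with $\mathcal{L}$ the restriction of $(\iota_{\mathbf{x}_{0}}^{*} \mathcal{L}_A)_y \boxtimes \chi_{(z-y)}$ to each punctured disk $D^{*}_\alpha(q)$ for $q \in Q_{n+2}^{m_0,\infty}$. The stalk is $E$ in every case, so only the local monodromy $T_q$ matters. This monodromy is read off from the connection $\nabla_{A,m_0}^{\lambda} = \nabla_A|_{\pi_Y^{-1}(m_0)} \otimes \nabla_\lambda$ of Section~\ref{sec:period}: the residue at $q = q_H$ is $A_H$ (the factor $(z-y)$ is nonvanishing and hence contributes no monodromy there), the residue at $q = y_0$ is $\lambda \cdot \mathrm{Id}_E$ (coming solely from $\nabla_\lambda$, since $\nabla_A|_{\pi_Y^{-1}(m_0)}$ is regular at $y_0$), and by the residue theorem on $\mathbb{P}^{1}$ the residue at $q = \infty$ equals $-\bigl(\sum_{H \in \mathcal{A}\setminus\mathcal{A}_{Y}} A_H + \lambda\bigr)$. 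Hence $T_q$ is conjugate to $\exp(\pm 2\pi i \cdot \mathrm{res}_q)$.

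The last ingredient is the linear algebra identity $\ker(\exp(2\pi i M) - \mathrm{Id}) = \ker M$ whenever $M$ has no nonzero integer eigenvalue. This is immediate from the Jordan decomposition: on generalized eigenspaces for a non-integer eigenvalue both maps are invertible, while on the generalized $0$-eigenspace $M$ is nilpotent and $\exp(2\pi i M) - \mathrm{Id} = 2\pi i M (\mathrm{Id} + \pi i M + \cdots)$ with the bracketed factor invertible. Applying this to $A_H$ and to $\sum_{H} A_H + \lambda$ via Assumption~\ref{as:generic} handles $q = q_H$ and $q = \infty$; the case $q = y_0$ is immediate because $\lambda \notin \mathbb{Z}$ forces $\exp(2\pi i \lambda) \neq 1$, so $T_{y_0} - \mathrm{id}$ is invertible and $H_1 = 0$. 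The main subtlety, though quite minor, is that the sign of the monodromy at $\infty$ (depending on orientation conventions) does not affect $\dim \ker(T_\infty - \mathrm{id})$, since $M$ and $-M$ have the same integer-eigenvalue locus and the same kernel.
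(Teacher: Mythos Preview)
Your treatment of the abstract isomorphism via the cellular chain complex of $S^{1}$ is correct and is exactly what the paper has in mind when it calls the first statement ``standard'' and cites Whitehead.

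For the dimension computations, however, there is a small but genuine gap. You assert that the local monodromy $T_{q}$ is conjugate to $\exp(2\pi i\,\mathrm{res}_{q})$, and then apply your (correct) identity $\ker(\exp(2\pi i M)-\mathrm{Id})=\ker M$ with $M=\mathrm{res}_{q}$. The conjugacy $T_{q}\sim\exp(2\pi i\,\mathrm{res}_{q})$ holds only when the residue is \emph{non-resonant}, i.e.\ no two eigenvalues differ by a nonzero integer. Assumption~\ref{as:generic} forbids nonzero integer eigenvalues but does not rule out, say, eigenvalues $1/2$ and $3/2$ occurring together in some $A_{H}$; in such cases the Jordan structure of $T_{q}$ can differ from that of $\exp(2\pi i A_{H})$, and your identity no longer applies directly.

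The paper closes this gap by invoking Proposition~\ref{prop:sol}, which was set up for exactly this purpose. The point is that Assumption~\ref{as:generic} \emph{does} preclude resonance between the eigenvalue $0$ and any other eigenvalue (a non-integer cannot differ from $0$ by a nonzero integer). Hence the generalized $0$-eigenspace $V'$ of the residue splits off as a monodromy-invariant direct summand; on $V'$ the residue is nilpotent, so there the monodromy genuinely is $\exp(2\pi i\,\mathrm{res}_{q}|_{V'})$ and your identity gives $\ker(T_{q}|_{V'}-\mathrm{Id})=\ker(\mathrm{res}_{q}|_{V'})=\ker(\mathrm{res}_{q})$, while on the complementary summand the monodromy has no eigenvalue $1$ regardless of any resonance there. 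Adding one sentence to this effect (or simply citing Proposition~\ref{prop:sol}) would make your argument complete.
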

\begin{proof}
The first statement is standard, see \cite{Whi78} for instance.
For the  second statement, let us recall that
the local system $((\iota_{\mathbf{x}_{0}}^{*}\mathcal{L}_{A})_{y}\boxtimes \chi_{(z-y)})$
on $\mathbb{C}\backslash Q_{n+1}^{m_{0}}$ is defined as the horizontal sections 
of the connection
$\nabla_{A,m_{0}}^{\lambda}=\nabla_{A}|_{\mathrm{pr}_{1}^{-1}(m_{0})}\otimes \nabla_{\lambda},
$ as we saw in Section \ref{sec:period}.
Then Proposition \ref{prop:sol} shows that  
\begin{multline*}
	\mathrm{dim}_{\mathbb{C}}
	((\iota_{\mathbf{x}_{0}}^{*}\mathcal{L}_{A})_{y}\boxtimes \chi_{(z-y)})_{q_{0}}^{\pi_{1}(D^{*}_{\alpha}(q),q_{0})}
	\\=\begin{cases}
		\mathrm{dim}_{\mathbb{C}}\mathrm{Ker\,}A_{H} & \text{ if} \quad q=q_{H}\text{ for }H\in \mathcal{A}\backslash\mathcal{A}_{Y},\\
		\mathrm{dim}_{\mathbb{C}}\mathrm{Ker\,}\lambda & \text{ if}\quad q=y_{0},\\
		\mathrm{dim}_{\mathbb{C}}\mathrm{Ker}\left(\sum_{H\in \mathcal{A}\backslash \mathcal{A}_{Y}}A_{H}+\lambda\right) & \text{ if}\quad q=\infty.
		\end{cases}
\end{multline*}
Here $q_{0}\in D^{*}_{\alpha}(q)$ are suitably chosen base points for $q\in Q_{n+1}^{m_{0},\infty}$.
Therefore the result follows from the first statement.
\end{proof}
\begin{prop}\label{prop:KL}
Under Assumption \ref{as:generic},
the period map along $Y$ at $m_{0}$ gives an isomorphism
\[
	\bigoplus_{q\in Q_{n+2}^{m_{0},\infty}} H_{1}(D^{*}_{\alpha}(q),(\iota_{\mathbf{x}_{0}}^{*}\mathcal{L}_{A})_{y}\boxtimes \chi_{(z-y)})
	\cong \mathrm{DR}_{\mathrm{Pf}}\left(\nabla^{K}_{c_{\lambda}(A)}\oplus \nabla^{L}_{c_{\lambda}(A)}\right)
\]
as local systems.
\end{prop}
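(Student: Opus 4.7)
The plan is to combine Theorem \ref{thm:period}, which identifies $C_{\chi\,!}(\mathcal{L}_A)_{m_0}$ with $\mathrm{DR}_{\mathrm{Pf}}(c_\lambda(\nabla_A))_{m_0}$ via the period map $\mathrm{Per}_{Y,m_0}$, with the explicit description of local cycles supplied by the lemma immediately preceding the proposition. The strategy is to show that $\mathrm{Per}_{Y,m_0}$ sends the local homology at each finite $q_H$ into $K$ and the local homology at $\infty$ into $L$, and then conclude by a dimension count. Indeed, that lemma already yields
\[
\dim \bigoplus_{q\in Q_{n+2}^{m_{0},\infty}} H_{1}(D^{*}_{\alpha}(q),(\iota_{\mathbf{x}_{0}}^{*}\mathcal{L}_{A})_{y}\boxtimes \chi_{(z-y)})=\sum_{H\in \mathcal{A}\setminus\mathcal{A}_Y}\dim \Ker A_H+\dim \Ker\Bigl(\sum_{H\in\mathcal{A}\setminus\mathcal{A}_Y}A_H+\lambda\Bigr),
\]
which equals $\dim K+\dim L$; since $K\cap L=\{0\}$ (Proposition 2.2 in \cite{Har1}), this coincides with $\dim \mathrm{DR}_{\mathrm{Pf}}(\nabla^K_{c_\lambda(A)}\oplus \nabla^L_{c_\lambda(A)})_{m_0}$.

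For the local cycles at $q_H$, I would invoke Proposition \ref{prop:sol} to represent each class in $H_1(D^*_\alpha(q_H),\ldots)$ by $\tilde\gamma_H\otimes \phi(\mathbf{x},y)v_H(z-y)^\lambda$ with $v_H\in\Ker A_H$ and $\phi v_H$ the horizontal section of $\nabla_A|_{\mathrm{pr}_1^{-1}(m_0)}$ normalized so that $\phi(\mathbf{x}_0,q_H)v_H=v_H$. A direct residue computation then gives
\[
\mathrm{Per}_{Y,m_{0}}(\tilde\gamma_H\otimes v_H)_{H'}=\int_{\tilde\gamma_H}\phi(\mathbf{x},y)v_H(z-y)^\lambda\,\frac{dy}{y-q_{H'}}=\begin{cases} 2\pi\kyo\,\phi(\mathbf{x},q_H)v_H(z-q_H)^\lambda & \text{if }H'=H,\\ 0 & \text{if }H'\neq H,\end{cases}
\]
the vanishing because the integrand is holomorphic at $y=q_H$ whenever $H'\neq H$. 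At the base point $m_0$ the image lies in $\Ker A_H\otimes\mathbb{C}e_H\subset K$; since $K$ is preserved by $\nabla_{c_\lambda(A)}$ (Proposition 2.2 in \cite{Har1}) and the image is horizontal by Theorem \ref{thm:period}, it remains in $\mathrm{DR}_{\mathrm{Pf}}(\nabla^K_{c_\lambda(A)})_{m_0}$.

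For the cycle at $\infty$ I would argue similarly after changing variables to $\eta=1/y$. There $\nabla_{A,m_0}^\lambda$ has residue $-(\sum_{H\in\mathcal{A}\setminus\mathcal{A}_Y}A_H+\lambda)$ at $\eta=0$, so Proposition \ref{prop:sol} produces representatives $\tilde\gamma_\infty\otimes \phi(\mathbf{x},y)w(z-y)^\lambda$ with $w\in\Ker(\sum A_H+\lambda)$ and $\phi w$ equal to $w$ at $\eta=0$. The $1$-form $\frac{dy}{y-q_{H'}}$ has leading term $-d\eta/\eta$ at $\eta=0$ independent of $H'$, and $\phi(\mathbf{x},y)w(z-y)^\lambda$ has leading behavior $c_0(\mathbf{x},z)\eta^\lambda\cdot w$ with $c_0$ nonvanishing at $m_0$ and independent of $H'$. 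A standard residue calculation then yields
\[
\mathrm{Per}_{Y,m_{0}}(\tilde\gamma_\infty\otimes w)_{H'}=c(\mathbf{x},z)\cdot w\quad\text{(same for every }H'\text{)},
\]
so the value at $m_0$ lies in $\Ker(\sum A_H+\lambda)\otimes \mathbb{C}\sum_{H'}e_{H'}=L$, and by the same invariance argument the image lies in $\mathrm{DR}_{\mathrm{Pf}}(\nabla^L_{c_\lambda(A)})_{m_0}$.

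Combining these facts, $\mathrm{Per}_{Y,m_0}$ restricts to a $\pi_1$-equivariant injection (by Theorem \ref{thm:period} and Remark \ref{rem:MCcoim}) from the source of the proposition into $\mathrm{DR}_{\mathrm{Pf}}(\nabla^K_{c_\lambda(A)}\oplus \nabla^L_{c_\lambda(A)})_{m_0}$, which the dimension count above forces to be an isomorphism; globalizing in $m_0$ yields the desired isomorphism of local systems. The main obstacle is the residue analysis at $\infty$: one has to carefully track the transformation of $(z-y)^\lambda$, the multivalued section $\phi w$, and the logarithmic $1$-form $\frac{dy}{y-q_{H'}}$ under $\eta=1/y$, and verify that the leading residue coefficient is the same nonzero scalar for every $H'$, so that the image truly lands on the diagonal line $\mathbb{C}\cdot\sum_{H'}e_{H'}$ built into the definition of $L$.
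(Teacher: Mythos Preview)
Your proposal is correct and follows essentially the same approach as the paper: both use Proposition \ref{prop:sol} to describe the invariant sections at each $q\in Q_{n+2}^{m_0,\infty}\setminus\{y_0\}$, compute the period map on these local cycles via residues/Cauchy's formula to see that the image lands in $K$ (for $q=q_H$) or $L$ (for $q=\infty$), and conclude by the dimension count from the preceding lemma together with the injectivity of $\mathrm{Per}_{Y,m_0}$. The paper's treatment of the $\infty$ case is simply the terse formula $\int_{\gamma_\infty}X_\infty(y)v\,(y_0-y)^\lambda\frac{dy}{y-q_H}=2\pi i\,y_0^{-\lambda}v$ (independent of $H$), which is exactly the ``same nonzero scalar for every $H'$'' you isolate as the main point.
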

\begin{proof}
For each $q\in Q_{n+2}^{m_{0},\infty}\backslash \{y_{0}\}$,
let $X_{q}(y)$
be the matrix of fundamental solutions for 
$\nabla_{A,m_{0}}^{\lambda}=\nabla_{A}|_{\mathrm{pr}_{1}^{-1}(m_{0})}\otimes \nabla_{\lambda}$
near a base point $q_{0}\in D^{*}_{\alpha}(q)$,
which satisfies the condition 
in Proposition \ref{prop:sol}.
Then we have 
\begin{multline*}
((\iota_{\mathbf{x}_{0}}^{*}\mathcal{L}_{A})_{y}\boxtimes \chi_{(z-y)})_{q_{0}}^{\pi_{1}(D^{*}_{\alpha}(q),q_{0})}
\\=
\begin{cases}
\left\{X_{q}(y)v\,\middle|\, v\in \mathrm{Ker\,}A_{H}\right\}&\text{ if }q=q_{H}\text{ for }H\in \mathcal{A}\backslash \mathcal{A}_{Y},\\
\left\{X_{q}(y)v\,\middle|\, v\in \mathrm{Ker}\left(\sum_{H\in \mathcal{A}\backslash \mathcal{A}_{Y}}A_{H}+\lambda\right)\right\}&\text{ if }q=\infty.
\end{cases}
\end{multline*}
Since the above
 $X_{q}(y)v$ is holomorphic at $y=q$ satisfying
\[	\left.X_{q}(y)v\right|_{y=q}=v,	
\]
Cauchy's integral formula and the residue theorem show that 
\[
	\int_{\gamma_{q}}X_{q}(y)v(y_{0}-y)^{\lambda}\,\frac{dy}{y-q_{H}}
	=
	\begin{cases}
	2\pi i (y_{0}-q_{H})^{\lambda}v & \text{ if }q=q_{H},\\
	2\pi i y_{0}^{-\lambda}v& \text{ if }q=\infty,\\
	0 & \text{otherwise},
	\end{cases}
\]
with the simple loop $\gamma_{q}$  around $q$ in $D^{*}_{\alpha}(q)$
with the base point $q_{0}$.
This shows that the period map along $Y$ at $m_{0}$ gives an injection
\[
	\bigoplus_{q\in Q_{n+2}^{m_{0},\infty}}
		H_{1}(D^{*}_{\alpha}(q),(\iota_{\mathbf{x}_{0}}^{*}\mathcal{L}_{A})_{y}\boxtimes \chi_{(z-y)})
		\xhookrightarrow[]{\mathrm{Per}_{Y,m_{0}}}
	\mathrm{DR}_{\mathrm{Pf}}\left(\nabla^{K}_{c_{\lambda}(A)}\oplus \nabla^{L}_{c_{\lambda}(A)}\right)_{m_{0}}.
\]
Moreover, since the dimension of both sides are equal by the previous lemma,
this map gives an isomorphism as desired.
\end{proof}
\begin{proof}[Proof of Theorem \ref{thm:MCdeRham}]
First recall that the functor $\mathrm{DR}_{\mathrm{Pf}}$
is an exact functor, since 
$\mathrm{DR}$ is an equivalence of categories and 
the pullback functor $j^{*}$ is exact.
Also recall that the middle convolution functor $\mathrm{MC}_{\chi}$
can be obtained as the cokernel of the map
\[
\bigoplus_{q\in Q_{n+2}^{m_{0},\infty}} H_{1}(D^{*}_{\alpha}(q),(\iota_{\mathbf{x}_{0}}^{*}\mathcal{L}_{A})_{y}\boxtimes \chi_{(z-y)})
	\rightarrow H_{1}(\mathbb{C}\backslash Q_{n+1}^{m_{0}},(\iota_{\mathbf{x}_{0}}^{*}\mathcal{L}_{A})_{y}\boxtimes \chi_{(z-y)}),
\]
and similarly, the middle convolution functor $\mathrm{mc}_{\lambda}$
can be obtained as the cokernel of the map
$\nabla_{c_{\lambda}(A)}^{K}\oplus \nabla_{c_{\lambda}(A)}^{L}\hookrightarrow 
\nabla_{c_{\lambda}}(A)$.

Therefore, the exactness of $\mathrm{DR}_{\mathrm{Pf}}$
implies the desired isomorphism 
by Theorem \ref{thm:period} and Proposition \ref{prop:KL}.
\end{proof}

\section{Riemann-Hilbert problem for logarithmic Pfaffian systems with 
constant coefficients}
As in the previous section,
we consider an affine hyperplane arrangement $\mathcal{A}$ in $\mathbb{C}^{l}$
with good line $Y$ 
and the category $\mathrm{Pf}(\log \mathcal{A})$
of logarithmic Pfaffian systems with respect to $\mathcal{A}$.
Let us  
consider the following variant of the Riemann-Hilbert problem,
which asks for a characterization of the essential image of the de Rham functor
\[	\mathrm{DR}_{\mathrm{Pf}}\colon \mathrm{Pf}(\log \mathcal{A})
	\longrightarrow \mathrm{Loc}(M(\mathcal{A}),\mathbb{C}).	
\]
\begin{prob}\label{prob:RH}
Let $\mathcal{L}\in \mathrm{Loc}(M(\mathcal{A}),\mathbb{C})$ be a local system.
Does there exist a logarithmic Pfaffian system $\nabla_{A}\in \mathrm{Pf}(\mathrm{log}(\mathcal{A}))$
such that $\mathrm{DR}_{\mathrm{Pf}}(\nabla_{A})\cong \mathcal{L}$?
We call such a logarithmic Pfaffian system $\nabla_{A}$
a {\em solution} for $\mathcal{L}$.
\end{prob}

By combining results obtained in the previous sections,
we can show the middle convolution functor $\mathrm{MC}_{\chi}$
preserves the solvability of the above Riemann-Hilbert problem
under Assumption \ref{as:generic}.
\begin{thm}\label{thm:RHMC}
Let $\mathcal{L}\in \mathrm{Loc}(M(\mathcal{A}),\mathbb{C})$ be a local system 
satisfying the property $\wp$, and $\chi\colon \mathbb{Z}\rightarrow \mathbb{C}^{\times}$
be a nontrivial character.
Then the following hold:
\begin{enumerate}
	\item 
	If $\mathcal{L}$ admits a solution $\nabla_{A}\in \mathrm{Pf}(\log \mathcal{A})$
	satisfying Assumption \ref{as:generic} with respect to a parameter $\lambda	\in \mathbb{C}\backslash \mathbb{Z}$
	such that $\chi(1)=\exp(2\pi i\lambda)$,
	then the local system $\mathrm{MC}_{\chi}(\mathcal{L})$
	also admits a solution as well.
	\item If $\mathrm{MC}_{\chi}(\mathcal{L})$
	admits a solution $\nabla_{A'}\in \mathrm{Pf}(\log \mathcal{A})$
	satisfying Assumption \ref{as:generic} with respect to a parameter $\lambda'\in \mathbb{C}\backslash \mathbb{Z}$
	such that $\chi(1)=\exp(-2\pi i\lambda')$,
	then the local system $\mathcal{L}$
	also admits a solution as well.
\end{enumerate}
\end{thm}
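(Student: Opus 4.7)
The proof plan is to reduce both statements to a direct application of the compatibility theorem \ref{thm:MCdeRham} combined with the composition law of Theorem \ref{thm:composition}.

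For part (1), my plan is as follows. Suppose $\mathcal{L}$ admits a solution $\nabla_A \in \mathrm{Pf}(\log\mathcal{A})$ with $\mathrm{DR}_{\mathrm{Pf}}(\nabla_A) \cong \mathcal{L}$, where $\nabla_A$ satisfies Assumption \ref{as:generic} for the parameter $\lambda$ matching $\chi(1) = \exp(2\pi i \lambda)$. Then Theorem \ref{thm:MCdeRham} directly gives
\[
\mathrm{MC}_{\chi}(\mathcal{L}) \;\cong\; \mathrm{MC}_{\chi}\circ \mathrm{DR}_{\mathrm{Pf}}(\nabla_A) \;\cong\; \mathrm{DR}_{\mathrm{Pf}}\circ \mathrm{mc}_{\lambda}(\nabla_A),
\]
so $\mathrm{mc}_{\lambda}(\nabla_A) \in \mathrm{Pf}(\log\mathcal{A})$ is a solution for $\mathrm{MC}_{\chi}(\mathcal{L})$. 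This is essentially immediate and requires no further work.

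For part (2), the strategy is to apply $\mathrm{MC}_{\chi^{-1}}$ and use the inverse part of the composition law. Given the solution $\nabla_{A'}$ for $\mathrm{MC}_{\chi}(\mathcal{L})$ satisfying Assumption \ref{as:generic} with parameter $\lambda' \in \mathbb{C}\setminus\mathbb{Z}$ such that $\chi(1) = \exp(-2\pi i \lambda')$, I observe that the dual character satisfies $\chi^{-1}(1) = \exp(2\pi i \lambda')$, so $\lambda'$ is the correct parameter to invoke Theorem \ref{thm:MCdeRham} for $\mathrm{MC}_{\chi^{-1}}$. This yields
\[
\mathrm{MC}_{\chi^{-1}}\bigl(\mathrm{MC}_{\chi}(\mathcal{L})\bigr) \;\cong\; \mathrm{MC}_{\chi^{-1}}\circ \mathrm{DR}_{\mathrm{Pf}}(\nabla_{A'}) \;\cong\; \mathrm{DR}_{\mathrm{Pf}}\circ \mathrm{mc}_{\lambda'}(\nabla_{A'}).
\]
Since $\mathcal{L}$ has the property $\wp$ by hypothesis, Theorem \ref{thm:composition} identifies the left-hand side with $\mathcal{L}$ itself, giving $\mathcal{L} \cong \mathrm{DR}_{\mathrm{Pf}}(\mathrm{mc}_{\lambda'}(\nabla_{A'}))$, so $\mathrm{mc}_{\lambda'}(\nabla_{A'})$ is the desired solution.

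The only delicate point — and what I expect to be the main bookkeeping obstacle rather than a genuine mathematical obstacle — is tracking signs and checking that the sign convention $\chi(1) = \exp(-2\pi i \lambda')$ in part (2) is exactly what is needed so that Theorem \ref{thm:MCdeRham} can be invoked for the character $\chi^{-1}$ with parameter $\lambda'$. Once this is verified, both assertions reduce to a single line of chained isomorphisms, with the compatibility theorem \ref{thm:MCdeRham} bridging the analytic and topological middle convolutions and the composition law providing the inverse. No further hypotheses on $\nabla_{A'}$ beyond Assumption \ref{as:generic} are needed, because the property $\wp$ assumption is placed on $\mathcal{L}$ and carries through automatically.
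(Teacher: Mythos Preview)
Your proposal is correct and follows essentially the same approach as the paper's proof: both parts are reduced to a direct application of Theorem~\ref{thm:MCdeRham}, with part~(2) additionally invoking the composition law $\mathrm{MC}_{\chi^{-1}}\circ\mathrm{MC}_{\chi}(\mathcal{L})\cong\mathcal{L}$ from Theorem~\ref{thm:composition} under the property~$\wp$. Your observation about the sign convention, namely that $\chi(1)=\exp(-2\pi i\lambda')$ is precisely what makes $\lambda'$ the right parameter for $\chi^{-1}$ in Theorem~\ref{thm:MCdeRham}, is exactly the bookkeeping needed and is handled correctly.
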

\begin{proof}
Suppose that $\mathcal{L}$ admits a solution
$\nabla_{A}\in \mathrm{Pf}(\log \mathcal{A})$
satisfying Assumption \ref{as:generic} with respect to a parameter $\lambda\in \mathbb{C}\backslash \mathbb{Z}$
such that $\chi(1)=\exp(2\pi i\lambda)$.
Then Theorem \ref{thm:MCdeRham} shows that
\[	\mathrm{MC}_{\chi}(\mathcal{L})\cong \mathrm{MC}_{\chi}\circ \mathrm{DR}_{\mathrm{Pf}}(\nabla_{A})
	\cong 
	\mathrm{DR}_{\mathrm{Pf}}(\mathrm{mc}_{\lambda}(\nabla_{A})).
\]	
Thus $\mathrm{mc}_{\lambda}(\nabla_{A})$
gives a solution for $\mathrm{MC}_{\chi}(\mathcal{L})$,
which proves the first statement.
The second statement can be shown in the same way
by using the fact that	
\[
	\mathrm{MC}_{\chi^{-1}}\circ \mathrm{MC}_{\chi}(\mathcal{L})\cong \mathcal{L},
\]
under the property $\wp$, see Theorem \ref{thm:composition}. 
\end{proof}
\bibliography{hre} 
\bibliographystyle{plain}
\end{document}